\newtheorem{theorem}{Theorem}[section]
\newtheorem{lemma}[theorem]{Lemma}
\newtheorem{proposition}[theorem]{Proposition}
\newtheorem{example}[theorem]{Example}
\newtheorem{definition}[theorem]{Definition}
\newtheorem{observation}[theorem]{Observation}
\newcommand{\ppspace}{\statespace}
\newcommand{\zeromeasure}{\bar{o}}
\newcommand{\supmark}[1]{\mathsf{m}(#1)}
\newcommand{\dist}[1]{\mathsf{P}_{#1}}
\newcommand{\Msetl}[1]{\underline{\mathcal{M}}^{#1}}
\newcommand{\DLR}{\textbf{DLR}}
\newcommand{\Msetsigma}{\mathfrak{M}}
\newcommand{\IntProGammaf}[2]{\left\langle #1, #2 \right\rangle}
\newcommand{\barpn}{\bar{\mathsf{P}}_n}
\newcommand{\barp}{\bar{\mathsf{P}}}
\newcommand{\hatpn}{\hat{\mathsf{P}}_n}
\newcommand{\tildepn}{\tilde{\mathsf{P}}_n}
\newcommand{\eps}{\varepsilon}
\newcommand{\gd}{\mathcal{G}_d}
\newcommand{\unitsphere}{\mathbb{S}^{d-1}}
\newcommand{\hem}{\mathbb{S}^{d-1}_+}
\newcommand{\hemdva}{\mathbb{S}^{1}_+}
\newcommand{\dotproduct}[2]{\left\langle #1, #2 \right\rangle}
\newcommand{\C}{C}
\newcommand{\B}{B}
\newcommand{\A}{A}
\newcommand{\R}{\mathbb{R}}
\newcommand{\rtwo}{\mathbb{R}^2}
\newcommand{\rthree}{\mathbb{R}^3}
\newcommand{\rd}{\mathbb{R}^d}
\newcommand{\N}{\mathbb{N}}
\newcommand{\ztwo}{\mathbb{Z}^2}
\newcommand{\boreld}{\mathcal{B}^d}
\newcommand{\boreldom}{\mathcal{B}^d_b}
\newcommand{\boreldvaom}{\mathcal{B}^2_b}
\newcommand{\drm}{\mathrm{d}}
\newcommand\sumne{\mathop{\sum\nolimits^{\not =}}\limits} 
\newcommand{\ind}{\textbf{1}}
\newcommand{\buno}{w.\,l.\,o.\,g.}
\newcommand{\wrt}{w.\,r.\,t.}
\newcommand{\markspace}{\mathcal{S}}
\newcommand{\statespace}{\rd \times \markspace}
\newcommand{\hmoment}{\mathcal{H}_m}
\newcommand{\hs}{\mathcal{H}_s}
\newcommand{\hl}{\mathcal{H}_l}
\newcommand{\hr}{\mathcal{H}_r}
\newcommand{\Mset}{\mathcal{M}}
\newcommand{\Msett}{\mathcal{M}^t}
\newcommand{\Msettemp}{\mathcal{M}^{\textit{temp}}}
\newcommand{\Msetf}{\mathcal{M}_f}
\newcommand{\MsetLambda}{{\mathcal{M}_{\Lambda}}}
\newcommand{\ProbMset}{\mathcal{P}(\mathcal{M})}
\newcommand{\poislambdaz}{\pi_\Lambda^z}
\newcommand{\poislambdanz}{\pi_{\Lambda_n}^z}
\newcommand{\markdist}{\mathsf{Q}}
\newcommand{\norm}[1]{\|#1\|}
\newcommand{\ms}[2]{#1_{#2}}
\newcommand{\abs}[1]{\left|{#1}\right|}
\newcommand{\goto}{\rightarrow}
\title{A Note on the Existence of Gibbs Marked Point Processes with Applications in Stochastic Geometry}
\author{Martina Petr{\'a}kov{\'a},\\ Faculty of Mathematics and Physics, Department of Probability \\ and Mathematical Statistics, Charles University, Sokolovská 83, \\ Prague 8, 18675, Czech Republic}
\date{March 20, 2023}
\begin{document}
\maketitle

\large \textbf{Abstract} 

\smallskip

\normalsize
This paper generalizes a recent existence result for infinite-volume marked Gibbs point processes. We try to use the existence theorem for two models from stochastic geometry. First, we show the existence of Gibbs facet processes in~$\mathbb{R}^d$ with repulsive interactions. We also prove that the~finite-volume Gibbs facet processes with attractive interactions need not exist. Afterwards, we study Gibbs--Laguerre tessellations of $\mathbb{R}^2$. The mentioned existence result cannot be used, since one of its assumptions is not satisfied for tessellations, but we are able to show the existence of an infinite-volume Gibbs--Laguerre process with a particular energy function, under the assumption that we almost surely see a~point.

\smallskip

\textbf{Keywords:} infinite-volume Gibbs measure, existence, Gibbs facet process, Gibbs-Laguerre tessellation

\textbf{MSC:} 60D05, 60G55

\section{Introduction}

Gibbs point processes present a broad family of models that considers various possibilities of interactions between points. The effect of these interactions is explained through the notion of an energy function, with states possessing lower energy being more probable than states possessing higher energy. This convention stems from the physical interpretation, as the notion of Gibbs processes was first introduced in statistical mechanics; see \cite{bo:RuelleUvod} for the standard reference.
Among others, \cite{bo:MollerWaagTheory2003} and \cite{ar:DereudreIntro} provide a general introduction to the topic of Gibbs point processes in the context of spatial modelling.

Gibbs point processes in a bounded window are defined using a density with respect to the distribution of a Poisson point process;
however, the situation gets much more complicated once we start to consider processes in the whole~$\rd$. As one can no longer use the approach with a~density with respect to a reference process, one can no longer define the distribution of an infinite-volume Gibbs process (called the infinite-volume Gibbs measure) explicitly. Instead, the \textbf{DLR} equations (see \cite{bo:RuelleUvod}) are used, which prescribe the distribution of the process inside a bounded window conditionally on a fixed configuration outside of this window.

The standard approach to obtain an infinite-volume Gibbs measure is based on the topology of local convergence and the result from \cite{ar:GeorgiiZessin} for level sets of a specific entropy.
One of the standard assumptions for the energy function is the finite-range assumption, which ensures that the range of interactions is uniformly bounded. It was proved in \cite{ar:DereudreQuermass} that the quermass-interaction process with unbounded grains (i.\,e., unbounded interactions) exists. Using this paper as an inspiration, an existence result for marked Gibbs point processes with unbounded interaction was proved in \cite{ar:RoellyZass}.

In the present paper, we address the assumptions of the existence theorem from \cite{ar:RoellyZass}  and present a modified version of the range assumption. Afterwards, we present two applications on models from stochastic geometry.

The first one is the Gibbs facet process in $\rd$ (see \cite{ar:VeceraBenesFasety}).
Here the energy is a~function of the intersections of tuples of facets. We prove that the repulsive model satisfies assumptions of the existence theorem, and therefore the infinite-volume Gibbs facet process exists in this case. On the other hand, we find a counterexample in $\rtwo$ for the case with attractive interactions and we extend it to prove that the finite-volume Gibbs facet processes with attractive interactions do not exist in $\rtwo$.

The second application deals with a model for a random tessellation of $\rtwo$.
We consider the Laguerre tessellation $L(\gamma)$ (see \cite{ar:LautensackZuyev}), which partitions $\rtwo$ according to the  power distance w.r.t.\ at most countable set of generators $\gamma \subset \rtwo \times (0,\infty)$.
We are interested in the situation where the random set of generators is a marked Gibbs point process with the energy function depending on the geometric properties of the tessellation.

Gibbs point processes with geometry-dependent interactions (which include random tessellations) were considered in \cite{ar:DDG2012}. Using the concept of hypergraph structure, an existence result was derived for the unmarked case. It was remarked that the~same existence result would extend to the marked case, and based on this, the existence of an infinite-volume Gibbs measure for several models of Gibbs--Laguerre tessellations of $\rthree$ was derived in~\cite{ar:JS2020} under the assumption of bounded marks.

In the case of marks not being uniformly bounded, while the range assumption from~\cite{ar:RoellyZass} turned out to be more restricting than initially expected, we
noticed that one can still use some of the results from that paper to
support the proof of the existence of the Gibbs--Laguerre tessellation. After a careful analysis of the behaviour of the Laguerre diagram, we
considered as an example the model with energy given by the number of
vertices in the tessellation, where we were able to prove a new existence theorem under the condition that we almost surely see a point.

\section{Basic notation and definitions}\label{sec:Intro}
In this paper, we study simple marked point processes. Our \textit{state space} will be in the product form $\rd \times \markspace$, where $d\geq 2$ and the  \textit{mark space} $(\markspace, \norm{\cdot})$ is a normed space. Each point $(x,m) \in \rd \times \markspace$ consists of the location $x \in \rd$ and the mark $m \in \markspace$. Let $\mathcal{B}(\statespace)$ denote the Borel $\sigma-$algebra on $\statespace$
and let $\boreld$ and $\boreldom$  denote the Borel $\sigma-$algebra and the set of all bounded Borel subsets of $\rd$, respectively.

We denote by $\Mset$ the set of all \textit{simple counting locally finite} Borel measures on $\statespace$ such that their projections $\gamma'(\cdot) = \gamma (\cdot \times \markspace)$ on $\rd$ are also simple counting locally finite Borel measures.
Each $\gamma \in \Mset$ (often referred to as \textit{configuration}) can be represented as $$\gamma = \sum_{i=1}^{N}\delta_{(x_i,m_i)},$$ where $\delta_{(\cdot)}$ denotes the \textit{Dirac measure}, $(x_i,m_i) \in \rd \times \markspace$, where $x_i$ are pairwise different points and $N \in \N \cup \{0,\infty\}$. Therefore, we can identify $\gamma$ with its support (the \textit{zero measure} $\zeromeasure $ is identified with $\emptyset$),
$\gamma \equiv \textbf{supp }\gamma = \{(x_1,m_1),(x_2,m_2), \dots \} \subset \ppspace.$
As is usual, we will sometimes regard $\gamma \in \Mset$ as a (locally finite) subset of $\statespace$ instead of a~simple counting locally finite measure for the sake of simple notation.

\textit{Simple marked point process} is a random element in the space $(\Mset, \Msetsigma)$. Here $\Msetsigma$ is the usual $\sigma$-algebra on $\Mset$ defined as the smallest $\sigma$-algebra such that the projections $p_B: \Mset \goto \R$, where $p_B(\nu) = \nu(B)$, are measurable $\forall B \in \mathcal{B}(\ppspace)$. The \textit{distribution} of a given point process is a probability measure on $(\Mset, \Msetsigma)$.

Take $\gamma,\,\xi \in \Mset$, $z \in \rd$, $\Lambda \in \boreld$ and denote by $\gamma_\Lambda = \sum_{i: x_i \in \Lambda}\delta_{(x_i,m_i)}$ the restriction of $\gamma$ to $\Lambda \times \markspace$ and
		by $\abs{\gamma} = \gamma(\ppspace)$ the number of points of $\gamma$.
The sum of measures $\gamma$ and $\xi$ is denoted by
	$\gamma \, \xi = \sum_{(x,m) \in \gamma} \delta_{(x,m)} + \sum_{(y,n) \in \xi}\delta_{(y,n)}$ and the supremum of norms of all marks in $\gamma$ is denoted by
	$\supmark{\gamma} = \underset{(x,m) \in \gamma}{\mathrm{sup}} \norm{m}$.

	Let $f: \ppspace \goto \R$ be a measurable, $\gamma$-integrable function. We write
	$$\IntProGammaf{\gamma}{f} = \int f(\boldsymbol{x}) \, \gamma(\drm \boldsymbol{x}) = \sum_{\boldsymbol{x} \in \gamma} f(\boldsymbol{x}).$$
We define the set of configurations with points in $\Lambda \times \markspace$ as $\MsetLambda = \{\gamma \in \Mset: \gamma = \ms{\gamma}{\Lambda}\}$. The set of all finite configurations is denoted by $\Msetf = \{\gamma \in \Mset: \left|\gamma \right| < \infty \}$ and for $a > 0$ define $\Mset_a = \{\gamma \in \Mset: \supmark{\gamma} \leq a\}$.

Take $x \in \rd$ and $r > 0$, then the open ball with centre  $x$ and radius $r$  is denoted by $U(x,r)$ and the closed ball with centre $x$ and radius $r$ by $B(x,r)$. The complement of a~ set $A \subset \rd$ will be denoted by $A^c$, the  \textit{interior} of $A$ by $\text{int}(A)$, the \textit{closure} of $A$ by $\text{clo}(A)$  and $\text{bd}(A) = \text{clo}(A) \setminus \text{int}(A)$ denotes the \textit{boundary} of $A$.

 Let $\Lambda \in \boreldom$. A function $F:\Mset \goto \R$ is called \textit{local} (or \textit{$\Lambda$-local}), if it satisfies $F(\gamma)=F(\gamma_\Lambda)$ for all $\gamma \in \Mset$.

\subsection{Tempered configurations and Gibbs measures} \label{subsec:TCaGM}
From now on, we fix $\delta > 0$. Before we dive into the theory of Gibbs processes, we define the set of tempered configurations (for reference, see Section 2.2 in \cite{ar:RoellyZass}).
The importance of this definition lies in the fact that the infinite-volume Gibbs measure is concentrated on the set of tempered configurations.

Take $t \in \N$ and set $ \Msett = \left \lbrace \gamma \in \Mset:  \left\langle \ms{\gamma}{U(0,\l)}, (1+\norm{m}^{d+\delta}) \right\rangle \leq t \cdot\l^d \text{ holds }\forall \l \in \N \right \rbrace$. Then $\Msettemp = \bigcup_{t \in \N} \Msett$ is called the set of \textit{tempered configurations}. These configurations have the following property (for proof, see Lemma 2 in \cite{ar:RoellyZass}). For $t \in \N$ there exists $l(t)$ such that $\forall l \geq l(t)$ and $\forall \gamma \in \Msett$ the following implication holds
	\begin{equation} \label{lemma:vlasttemp}
		(x,m) \in \ms{\gamma}{U(0,2\l+1)^c} \implies B(x,\norm{m}) \cap U(0,\l) = \emptyset.
	\end{equation}

This property inspires the following definition of an increasing sequence of subsets of $\Msettemp$. Take $\l \in \N$ and define
	\begin{equation*}
		\Msetl{l} =  \lbrace \gamma \in \Msettemp: B(x,\norm{m}) \cap U(0,k) = \emptyset ,\, \forall (x,m) \in \gamma_{U(0,2k+1)^c},\, \forall k \in \N, \, k \geq l \rbrace.
	\end{equation*}

We can see from (\ref{lemma:vlasttemp}) that $\Msett \subset \Msetl{\lceil l(t) \rceil}$, $\forall t \in \N$, and consequently $\Msettemp = \bigcup_{\l \in \N} \Msetl{\l}.$ For simplicity, we will write $\Msetl{l(t)}$ instead of $\Msetl{\lceil l(t) \rceil}$ in the following text.

 The focus of this work is the family of \textit{Gibbs point processes}. In particular, we will work with the distributions of these processes, which are called \textit{Gibbs measures}. Choose a~\textit{reference mark distribution} $\markdist$ on the mark space $(\markspace, \norm{\cdot})$ and take $\Lambda \in \boreldom$ and $z > 0$. As a \textit{reference distribution} take $\poislambdaz$, the distribution of the marked Poisson point process in $\statespace$ with intensity measure $z \lambda_\Lambda(\drm x) \bigotimes \markdist(\drm m)$, where $\lambda_\Lambda(\drm x)$ is the restriction of the Lebesgue measure $\lambda$ on $\Lambda$ and $ \bigotimes $ denotes the standard product of measures.

	\textit{An energy function} is a mapping $H: \Msetf \goto \R\cup\{+\infty\}$ which is  measurable, translation invariant and satisfies $H(\zeromeasure) = 0$. Take $\Lambda \in \boreldom$ and $z > 0$. We define \textit{the finite-volume Gibbs measure} in $\Lambda$ with energy function~$H$ and activity $z$ as
	\begin{equation}\label{def:FVGM}
		\dist{\Lambda}(\drm \gamma) = \frac{1}{Z_\Lambda} \cdot e^{-H(\gamma_\Lambda)}\, \poislambdaz(\drm \gamma),
	\end{equation}
where $Z_\Lambda = \int e^{-H(\gamma_\Lambda)} \poislambdaz(\drm \gamma)$ is the normalizing constant called the \textit{partition function}.

Clearly, for the finite-volume Gibbs measure to be well defined, we need $0 < Z_\Lambda < \infty$. This will be satisfied under our assumptions on the energy function $H$ (see Section \ref{sec:GMIntro}).
\begin{example}[Example 2 in \cite{ar:RoellyZass}]\label{Ex:EnergyFuncPair}
	Let $\phi: \rd \times \rd \goto \R$ be a non-negative, translation invariant, measurable function, called the \textit{pair potential}, and consider
	\begin{equation*}
			H_1(\gamma) = \sumne_{(x,m),\,(y,n) \in \gamma} \phi(x,y) \cdot \ind \{\abs{x-y}\leq \norm{m} + \norm{n} \}.\\
	\end{equation*}
\end{example}

Although there is a natural generalization of the measures $\poislambdaz$ to $\pi^z$, where $\pi^z$ is the distribution of a marked Poisson point process with intensity measure $z\lambda(\drm x) \bigotimes \markdist(\drm m)$, we cannot generalize the definition of a finite-volume Gibbs measure to an infinite-volume Gibbs measure.

For energy function $H$ and $\Lambda \in \boreldom$ define \textit{the conditional energy of $\gamma \in \Mset$ in $\Lambda$} given its environment as
	\begin{equation}\label{def: conditionalenergy}
		H_\Lambda(\gamma) = \lim_{n\to\infty} H(\ms{\gamma}{\Lambda_n}) - H(\ms{\gamma}{\Lambda_n \setminus \Lambda}),
	\end{equation}
where $\Lambda_n = [-n,n)^d$. For the conditional energy to be well defined, we later pose some assumptions on~$H$ (see Section \ref{sec:GMIntro}). For $\Lambda \in \boreldom$, $z > 0$,  energy function $H$ and $\xi \in \Mset$, define \textit{the Gibbs probability kernel} as
	\begin{equation}\label{df:GibbsianKernel}
		\Xi_\Lambda(\xi,\drm \gamma) = \frac{e^{-H_\Lambda(\gamma_\Lambda\xi_{\Lambda^c})}}{Z_\Lambda(\xi)}\, \poislambdaz(\drm \gamma),
	\end{equation}
	where $Z_\Lambda(\xi) = \int e^{-H_\Lambda(\gamma_\Lambda\xi_{\Lambda^c})} \poislambdaz(\drm \gamma)$ is the normalizing constant. Again, for $\Xi_\Lambda(\xi,\drm \gamma)$ to be well defined, we need $0<Z_\Lambda(\xi)<\infty$. Under our assumptions on $H$ (see~Section \ref{sec:GMIntro}~), this will be true for $\xi \in \Msettemp$. The  infinite-volume Gibbs measure is now defined as a~probability measure on $\Mset$, which satisfies the $\DLR$ equations (named after Dobrushin, Lanford and Ruelle). This definition follows naturally from the fact that the finite-volume Gibbs measures also satisfy $\DLR$ (see \cite{ar:DereudreIntro}, Proposition 5.3).

\begin{definition}\label{def:infinitegibbsmeasure}
	A probability measure $\mathsf{P}$ on $\Mset$ is called \textit{an infinite-volume Gibbs measure} with energy function $H$ and activity $z$, if for all $ \Lambda \in \boreldom$ and for all measurable bounded local functions $F: \Mset \to \R$ the $\DLR_\Lambda$ equation holds:
	\begin{equation*}
		\int_\Mset F(\gamma) \, \mathsf{P}(\drm \gamma) = \int_\Mset \int_\MsetLambda F(\gamma_\Lambda\xi_{\Lambda^c})\,\Xi_\Lambda(\xi, \drm \gamma)\, \mathsf{P}(\drm \xi).
	\end{equation*}
\end{definition}

The existence of a measure satisfying the definition above is not guaranteed and must be proved.

\section{The existence result}\label{sec:GMIntro}
To be able to prove the existence of an infinite-volume Gibbs measure, we need the following four assumptions: the moment assumption $\hmoment$, the stability assumption $\hs$, the local stability assumption $\hl$ and the range assumption $\hr$.

\subsection{The Moment and Stability Assumptions} \label{subsec:StabilityAs}
Recall that we have chosen $\delta > 0$ and a reference mark distribution $\markdist$ in Section \ref{subsec:TCaGM}. \textit{The moment assumption} $\hmoment$ concerns the distribution $\markdist$, as it has to satisfy
\begin{equation*}
	\hmoment: \int_{\markspace} \exp(\norm{m}^{d+2\delta}) \markdist(\drm m) < \infty.
\end{equation*}

The rest of the assumptions pertain to the energy function $H$. The first one is  \textit{the stability assumption}:
\begin{equation*}
	\mathcal{H}_{s}: \text{There exists } c\geq0 \text{ such that } \forall \gamma \in \Msetf: H(\gamma) \geq -c \left< \gamma,1 + \norm{m}^{d+\delta} \right>. \\
\end{equation*}

Under the assumptions $\hs$ and $\hmoment$ we have $0<Z_\Lambda<\infty$ for all $\Lambda \in \boreldom$ and therefore the finite-volume Gibbs measures are well defined (for proof, see Lemma 4 in \cite{ar:RoellyZass}). For the infinite-volume Gibbs measure to be well defined, we need an analogue of the stability assumption for the conditional energy, \textit{the local stability assumption}.
\begin{align*}
	\mathcal{H}_{l}: &\text{ For all } \Lambda \in \boreldom \text{ and all }t \in \N \text{ there exists } c(\Lambda,t) \geq 0 \text{ such that for all } \xi \in \Msett \\ &\text{ the following inequality holds for any } \gamma_\Lambda \in \MsetLambda: \\
	& \qquad \qquad \qquad \quad H_\Lambda(\gamma_\Lambda\,\xi_{\Lambda^c}) \geq -c(\Lambda,t) \left<\gamma_\Lambda,1 +
	\norm{m}^{d+\delta}\right>.
\end{align*}

Let us emphasize that the lower bound for the conditional energy must hold uniformly over $\Msett$. In the same way the stability ensures that the partition function is finite, it can be proved that assumptions $\hl$ and $\hmoment$ imply that $0 < Z_{\Lambda} (\xi) < \infty$, for all $\Lambda \in \boreldom$ and $\xi \in \Msettemp$ (for proof, see Lemma 7 in \cite{ar:RoellyZass}).
Contrary to the stability assumption, local stability is often, but not automatically, satisfied for non-negative energy functions. We state our observation regarding the validation of this assumption.

\begin{observation} \label{claim:LokStabNerapornost}
	Assume that the energy function $H$ satisfies $H(\gamma_A) - H(\gamma_B) \geq 0$, for all $\gamma \in \Msetf$ , whenever $B \subset A$; $A,B \in \boreldom$. Then the conditional energy is non-negative and the local stability assumption $\hl$ holds.
\end{observation}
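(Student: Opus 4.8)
The plan is to prove the two claims in the stated order, deriving the local stability bound as an essentially immediate corollary of the non-negativity of the conditional energy. The entire argument rests on applying the monotonicity hypothesis $H(\gamma_A) - H(\gamma_B) \geq 0$ termwise inside the limit that defines $H_\Lambda$ in~(\ref{def: conditionalenergy}).

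First I would fix $\Lambda \in \boreldom$ and an arbitrary $\nu \in \Mset$, and examine a single term of the defining sequence, namely $H(\ms{\nu}{\Lambda_n}) - H(\ms{\nu}{\Lambda_n \setminus \Lambda})$ for $\Lambda_n = [-n,n)^d$. Since $\nu$ is locally finite and $\Lambda_n$ is bounded, the restriction $\ms{\nu}{\Lambda_n}$ lies in $\Msetf$, so the hypothesis is applicable to it. Taking in the hypothesis the finite configuration $\ms{\nu}{\Lambda_n}$ together with $A = \Lambda_n$ and $B = \Lambda_n \setminus \Lambda$ (both bounded Borel sets, with $B \subset A$), and using the elementary identities $(\ms{\nu}{\Lambda_n})_{\Lambda_n} = \ms{\nu}{\Lambda_n}$ and $(\ms{\nu}{\Lambda_n})_{\Lambda_n \setminus \Lambda} = \ms{\nu}{\Lambda_n \setminus \Lambda}$, the hypothesis yields $H(\ms{\nu}{\Lambda_n}) - H(\ms{\nu}{\Lambda_n \setminus \Lambda}) \geq 0$ for every $n$. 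Passing to the limit (whether it is finite or equal to $+\infty$) gives $H_\Lambda(\nu) \geq 0$ for all $\nu \in \Mset$, which is the asserted non-negativity of the conditional energy.

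For the local stability assumption $\hl$, I would simply specialize this to $\nu = \gamma_\Lambda\,\xi_{\Lambda^c}$ for arbitrary $\gamma_\Lambda \in \MsetLambda$ and $\xi \in \Msett$, obtaining $H_\Lambda(\gamma_\Lambda\,\xi_{\Lambda^c}) \geq 0$. Because the right-hand side of the $\hl$ inequality, $-c(\Lambda,t)\,\IntProGammaf{\gamma_\Lambda}{1 + \norm{m}^{d+\delta}}$, is non-positive for any $c(\Lambda,t) \geq 0$, the choice $c(\Lambda,t) = 0$ makes the required bound hold trivially and uniformly over $\Msett$. Hence $\hl$ is satisfied.

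The argument is short, so the only point demanding care — the main obstacle in this case — is the correct bookkeeping when invoking the monotonicity hypothesis: it speaks of the restrictions $\gamma_A,\gamma_B$ of one fixed \emph{finite} configuration, so I must apply it to $\ms{\nu}{\Lambda_n}$ rather than to the possibly infinite $\nu$, and verify that restricting $\ms{\nu}{\Lambda_n}$ to $\Lambda_n \setminus \Lambda$ returns exactly $\ms{\nu}{\Lambda_n \setminus \Lambda}$, which holds since $\Lambda_n \setminus \Lambda \subset \Lambda_n$. One should also record that $\Lambda_n$ and $\Lambda_n \setminus \Lambda$ are indeed bounded Borel sets, so that they are admissible choices for $A$ and $B$ in the hypothesis.
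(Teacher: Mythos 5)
Your proof is correct and rests on the same idea as the paper's: each term $H(\nu_{\Lambda_n})-H(\nu_{\Lambda_n\setminus\Lambda})$ of the sequence defining $H_\Lambda$ is non-negative by the monotonicity hypothesis, hence so is the limit, and $\hl$ then follows trivially with $c(\Lambda,t)=0$. The only (minor) difference is that the paper reaches the inequality via a telescoping sum over single-point removals, with $A=\Lambda_n\setminus\{x_1,\dots,x_{i-1}\}$ and $B=\Lambda_n\setminus\{x_1,\dots,x_{i}\}$, whereas you apply the hypothesis in one step with $A=\Lambda_n$ and $B=\Lambda_n\setminus\Lambda$ --- which is more direct and, together with your explicit remark that the hypothesis must be fed the finite configuration $\nu_{\Lambda_n}$ rather than a possibly infinite $\nu$, is if anything slightly cleaner.
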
	

\begin{proof}
	Let $\gamma \in\Mset$ and $\Lambda \in \boreldom$. Then $\exists K \in \N$ and points $\boldsymbol{x_1}, \dots, \boldsymbol{x_K} \in \rd \times \markspace$ such that $\gamma_{\Lambda} = \sum_{i=1}^{K} \delta_{\boldsymbol{x_i}}$ and we can write
	\begin{align*}
		\lim_{n\to\infty} H(\gamma_{\Lambda_n}) - H(\gamma_{\Lambda_n \setminus \Lambda})
		= \lim_{n\to\infty} \sum_{i=1}^{K} H(\gamma_{\Lambda_n \setminus \{x_1, \dots, x_{i-1}\}}) - H(\gamma_{\Lambda_n \setminus \{x_1, \dots, x_{i}\}}) \geq 0.
	\end{align*}
\end{proof}

\subsection{New formulation of the range assumption}
The last assumption
considers the range of the  interactions among the points.
First, we state the original range assumption from \cite{ar:RoellyZass}.
\begin{align*}
	\tilde{\hr}: &\text{ Fix }\Lambda \in \boreldom. \text{ For any }\gamma \in \Msett, \, t \geq 1, \, \text{ there exists }  \tau(\gamma,\Lambda) > 0 \text{ such that } \\
	& \qquad \qquad H_\Lambda(\gamma) = H(\gamma_{\Lambda\oplus B(0,\tau(\gamma,\Lambda))}) - H(\gamma_{(\Lambda\oplus B(0,\tau(\gamma,\Lambda)))\setminus\Lambda}),
\end{align*}
 where $\Lambda\oplus B(0,R) = \{x \in \rd: \exists y \in \Lambda,\, \abs{x-y} \leq R\}$.
It is noted that the choice of $\tau(\gamma, \Lambda)$ can be
\begin{equation} \label{def:wrongtau}
	\tau(\gamma, \Lambda) = 2\l(t) + 2\supmark{\gamma_{\Lambda}} + 1,
\end{equation}
and this choice is used in the proof of the existence theorem.
	Contrary to the claims in \cite{ar:RoellyZass}, this choice of $\tau(\gamma,\Lambda)$ does not work for the presented examples of the energy function, as we prove in the following lemma.

\begin{lemma} \label{lemma:protiprikladproHr}
	Consider the state space $\rtwo \times \R$ and the energy function $H_1$ from Example~\ref{Ex:EnergyFuncPair}.
	Then $\forall \delta > 0$ there exist $\Lambda \in \boreldvaom$ and a set $\Mset_C \subset \Mset^1$ such that $\forall \gamma \in \Mset_C$
	\begin{equation*}
		\lim_{n\to\infty} H(\gamma_{\Lambda_n}) - H(\gamma_{\Lambda_n \setminus \Lambda}) = H_\Lambda(\gamma) \neq H(\gamma_{\Lambda\oplus B(0,\tau)}) - H(\gamma_{(\Lambda\oplus B(0,\tau))\setminus \Lambda})
	\end{equation*}
	if we choose $\tau = 2l(1) + 2\mathsf{m}(\gamma_\Lambda) +1$.
\end{lemma}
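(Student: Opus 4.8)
The plan is to disprove the claimed identity by exhibiting a single window together with a family of tempered configurations on which a genuine interaction reaches across the alleged range. First I would fix the pair potential to be, say, $\phi\equiv 1$ (any $\phi$ that is positive at the separation used below does the job). For $H_1$ the conditional energy collects exactly those interacting pairs having at least one point in $\Lambda$,
\[
H_\Lambda(\gamma)=\sumne_{\substack{(x,m),(y,n)\in\gamma\\ \{x,y\}\cap\Lambda\neq\emptyset}}\phi(x,y)\,\ind\{\abs{x-y}\leq\norm{m}+\norm{n}\},
\]
whereas the right-hand side of the asserted identity is the same sum restricted to pairs contained in $\Lambda\oplus B(0,\tau)$. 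Consequently the two sides differ as soon as $\gamma$ possesses a pair $(x,m)\in\gamma_\Lambda$, $(y,n)\in\gamma_{(\Lambda\oplus B(0,\tau))^c}$ with $\abs{x-y}\le\norm m+\norm n$ and $\phi(x,y)>0$, and the entire task reduces to planting such a cross-range pair inside a configuration of $\Mset^1$.

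The decisive point is that $H_1$ is translation invariant whereas temperedness is anchored at the origin: far from $0$ a point is permitted a comparatively large mark, hence a long interaction reach, yet the $\tau$ of (\ref{def:wrongtau}) records only $\supmark{\gamma_\Lambda}$. I would therefore take $\Lambda=B(p,r)$ with $r$ fixed and $\abs p$ large, and the two-point configuration $\gamma=\{(p,0),(y,n)\}$. Since the point inside $\Lambda$ has mark $0$, we get $\supmark{\gamma_\Lambda}=0$ and $\tau=2l(1)+1$. Placing $y$ radially outward from $p$ at distance $s:=r+\tau+1$, so that $\abs{p-y}=s$ and $\abs y=\abs p+s$, and assigning it the mark $\norm n=s$, one has $\abs{p-y}=s\le 0+\norm n$, so the pair interacts; meanwhile the distance from $y$ to $\Lambda$ equals $\tau+1>\tau$, so $y\notin\Lambda\oplus B(0,\tau)$. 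Thus this pair enters $H_\Lambda(\gamma)$ but not the truncated expression, which here is $0$.

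It remains to check that $\gamma\in\Mset^1$, and this is precisely where $\abs p$ must be taken large. For windows $U(0,l)$ with $l\le\abs p+s$ the configuration meets at most the mark-$0$ point, so $\IntProGammaf{\gamma_{U(0,l)}}{1+\norm m^{2+\delta}}\le 1\le l^2$; for $l>\abs p+s$ both points are seen and the constraint becomes $2+s^{2+\delta}\le l^2$, whose binding instance is the least such $l$, of size about $\abs p+s$. As $s=r+2l(1)+2$ is a constant (depending on $\delta$ only through $l(1)$), choosing $\abs p$ large enough yields $(\abs p+s)^2\ge 2+s^{2+\delta}$, so temperedness holds for every $l\in\N$ and every $\delta>0$. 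I would then let $\Mset_C$ be the family of all such two-point configurations (for instance as the direction of $y$ ranges over the unit circle), each of which produces the asserted strict inequality.

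The main obstacle is conceptual rather than computational: one has to realize that a small window near the origin cannot work. Because $\delta>0$, the full temperedness bound forces an exterior point at distance $D$ from the origin to carry a mark of size at most $\approx D^{2/(2+\delta)}=o(D)$, which is too small to bridge the range $\tau$ into a window sitting at the origin; the weaker consequence (\ref{lemma:vlasttemp}) alone is misleading in this respect. Playing the translation invariance of $H_1$ against the origin-anchored temperedness, by pushing $\Lambda$ far out (equivalently, by taking $\Lambda$ a large ball whose boundary is far from $0$), is exactly what rescues the construction, and the only genuine piece of work left is the routine temperedness verification above.
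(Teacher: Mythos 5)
Your proposal is correct and rests on the same mechanism as the paper's own proof: the energy $H_1$ is translation invariant while temperedness is anchored at the origin, so a window placed far from $0$ admits a nearby exterior point whose mark is large enough to interact across the distance $\tau$ while still lying outside $\Lambda\oplus B(0,\tau)$. The paper implements this with explicit coordinates --- first $x=(120,120)$, $m=1$, $y=(150,150)$, $n=43$ for $\delta=\tfrac12$, then a second, $\delta$-dependent choice of $x'$, $y'$, $n$ in its Step 2 --- and builds $\Mset_C$ by appending to the fixed pair arbitrary tempered configurations supported outside a large ball $U(0,2k+1)$. Your version is cleaner in two respects: giving the inner point mark $0$ freezes $\tau$ at the constant $2l(1)+1$ independent of the configuration, and sending $\abs{p}\to\infty$ while the separation $s=r+\tau+1$ stays fixed makes the temperedness verification trivial and uniform in $\delta$, so the paper's two-step case analysis is not needed. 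The only points to make explicit in a final write-up are that $\phi$ must not vanish at the displacement actually used (the paper's proof carries the same implicit requirement, since it concludes $H_\Lambda(\gamma)=\phi(x,y)\neq 0$), and that the limit defining $H_\Lambda$ stabilizes for your two-point configurations, which is immediate once $\Lambda_n$ contains both points.
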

\begin{proof}
	First, we consider $\delta = \frac{1}{2}$  and afterwards modify the example for general $\delta$.
	\\
	\emph{Step 1)} Let $\delta = \frac{1}{2}$. It holds that (see Lemma 2 in \cite{ar:RoellyZass})
	\begin{equation*}
		l(t) =  
		\frac{1}{2} \cdot t^{\frac{1}{\delta}} \cdot 2^{\frac{2+\delta}{\delta}} = t^2\cdot 2^{4}.
	\end{equation*}
	Therefore, we get that $l(1) = 2^4 = 16$. Take points $(x,m),(y,n) \in \rtwo \times \R$, where $x = (120,120)$, $m = 1$, $y = (150,150)$ and $n = 43$. Let $\Lambda = B(x,\varepsilon)$, where $\varepsilon \in \left[0,1\right]$, and set $\gamma = \delta_{(x,m)}+ \delta_{(y,n)}$. Then it holds that $\gamma \in \Mset^1,$ $B(x,m) \cap B(y,n) \neq \emptyset,$ and $(y,n) \notin \gamma_{\Lambda\oplus B(0,\tau)}$ for $\tau = 2l(1) + 2\mathsf{m}(\gamma_\Lambda) +1 = 35$,
and therefore
	\begin{align*}
		&\lim_{n\to\infty} H(\gamma_{\Lambda_n}) - H(\gamma_{\Lambda_n \setminus \Lambda}) = \phi(x,y),\\
		&H(\gamma_{\Lambda\oplus B(0,\tau)}) - H(\gamma_{(\Lambda\oplus B(0,\tau))\setminus \Lambda}) = 0-0 = 0.
	\end{align*}
	Choose $k \in \N$ such that $k \geq \l(1)$ and $\Lambda \oplus B(0,1) \subset U(0,k)$ and then define the set $\Mset_C = \{ \gamma\,\xi_{U(0,2k+1)^c}: \xi \in \Mset^1, (x,m) \in \xi, (y,n) \in \xi\} \subset \Mset^1$. We get that $\forall \nu \in \Mset_C$
	\begin{align*}
		&\lim_{n\to\infty} H(\nu_{\Lambda_n}) - H(\nu_{\Lambda_n \setminus \Lambda}) = \phi(x,y),\\
		&H(\nu_{\Lambda\oplus B(0,\tau)}) - H(\nu_{(\Lambda\oplus B(0,\tau))\setminus \Lambda}) = 0-0 = 0.
	\end{align*}
	\emph{Step 2)} Let $\delta > 0$. Then we can choose $(x,m)$ and $(y,n)$ in the following way:
	\begin{enumerate}
		\item $m = 1$ and $x = (x',0)$  where $x'$ is large enough so that  $$4+2\cdot2^{\frac{2}{\delta}} \leq \frac{1}{2} + (( 4+ 2\cdot2^{\frac{2}{\delta}} + x')^2 - 3)^{\frac{1}{2+\delta}}
		\text{  and  } x' > 1,$$
		\item $y = (y',0)$, where $y' = 4+ 2\cdot2^{\frac{2}{\delta}} + x'$ and $n = (( y')^2 - 3)^{\frac{1}{2+\delta}}.$
	\end{enumerate}
	
	Set $\gamma = \delta_{(x,m)}+ \delta_{(y,n)}$ and $\Lambda = B(x,\varepsilon)$ for some $\varepsilon \in \left[0,\frac{1}{2}\right]$. We again obtain that $\gamma~\in~\Mset^1$, $B(x,m) \cap B(y,n) \neq \emptyset$ and also that $(y,n) \notin \gamma_{\Lambda\oplus B(0,\tau)}$ for the choice $\tau = 2l(1) + 2\mathsf{m}(\gamma_\Lambda) +1$.
	The choice of $\Mset_C$ proceeds in the same way as in the first step.	\\ \makebox{}
\end{proof}

\medskip

In particular, we have found a counterexample to the claim that for any configuration $\gamma_\Lambda\in\MsetLambda$ and any $\xi \in \Msett$ the equality  $H_\Lambda(\gamma_\Lambda \xi_{\Lambda^c}) = H_\Lambda(\gamma_\Lambda \xi_{\Delta \setminus \Lambda})$ holds as soon as $\Lambda~\oplus~B(0,2l(t) + 2\supmark{\gamma_{\Lambda}} +1) \subset \Delta$. We have given the counterexample for $d=2$, $t=1$ and pairwise-interaction model, however, it should be clear that it would be possible to find counterexamples in the same way for $d \geq 3$, $t \in \N$ and other energy functions, for which the interaction between two points is given by the intersection of their respective balls\footnote{After seeing our counterexample from Lemma \ref{lemma:protiprikladproHr}, the authors of \cite{ar:RoellyZass} submitted errata with a corrected form of formula (\ref{def:wrongtau}) by means of adding a term that depends on the distance from
	the origin.}.

We propose the following modification of the range assumption.

\medspace

$\hr$: Fix $\Lambda \in \boreldom$ and $\l \in \N.$ Then for all $\gamma \in \Msettemp$ such that $\gamma_{\Lambda^c} \in \Msetl{\l}$ there exists $\tau = \tau(\supmark{\gamma_\Lambda},l,\Lambda) > 0$ such that
\begin{equation*} \label{formula:rangeassumption}
	H_\Lambda(\gamma) = H(\gamma_{\Lambda\oplus B(0,\tau)}) - H(\gamma_{(\Lambda\oplus B(0,\tau))\setminus\Lambda}),
\end{equation*}
holds and $\tau(\supmark{\gamma_\Lambda},l,\Lambda)$ is a non-decreasing function of $\supmark{\gamma_{\Lambda}}$. Particularly, $\tau$ depends on $\gamma_{\Lambda^c}$ only through $\l$.

With this new modified assumption the existence theorem from \cite{ar:RoellyZass} holds.

\begin{theorem}[Theorem 1 in \cite{ar:RoellyZass}] \label{th:Existencetheorem}
	Under assumptions $\mathcal{H}_{s}$, $\mathcal{H}_{l}$, $\mathcal{H}_r$ and $\mathcal{H}_m$ there exists at least one infinite-volume Gibbs measure with energy function $H$.
\end{theorem}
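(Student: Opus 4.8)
The plan is to follow the variational/compactness strategy of Georgii and Zessin, as adapted to the marked setting in \cite{ar:RoellyZass}, checking at each point that the reformulated range assumption $\hr$ can replace the flawed original $\tilde{\hr}$. First I would build an approximating sequence: for $\Lambda_n = [-n,n)^d$ take the finite-volume Gibbs measures $\dist{\Lambda_n}$ from (\ref{def:FVGM}), which are well defined because $\hs$ together with $\hmoment$ gives $0 < Z_{\Lambda_n} < \infty$ (Lemma 4 in \cite{ar:RoellyZass}), and pass to their stationarized versions (empirical fields) $\barpn$, averaged over translations within $\Lambda_n$ so that each $\barpn$ is shift-invariant. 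The candidate infinite-volume Gibbs measure $\barp$ will be a subsequential limit of the $\barpn$ in the topology of local convergence $\tau_{\mathcal{L}}$.

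Second, I would establish relative compactness. The main analytic input is a uniform bound on the specific entropy of $\barpn$ relative to the stationary Poisson process $\pi^z$: stability $\hs$ lower-bounds $-H$ on $\Lambda_n$ by $-c\,\IntProGammaf{\gamma}{1+\norm{m}^{d+\delta}}$, and the moment assumption $\hmoment$ integrates the resulting exponential mark factor, so the per-volume free energy stays bounded as $n\to\infty$. Since the specific entropy is affine and lower semicontinuous with $\tau_{\mathcal{L}}$-compact level sets \cite{ar:GeorgiiZessin}, the sequence $(\barpn)$ has a $\tau_{\mathcal{L}}$-convergent subsequence with stationary limit $\barp$. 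Along the way, $\hmoment$ also forces $\barp$ to be concentrated on the tempered configurations $\Msettemp$, by controlling $\IntProGammaf{\gamma_{U(0,l)}}{1+\norm{m}^{d+\delta}}$ uniformly and invoking the definition of $\Msett$; this is what later lets us work inside the increasing sets $\Msetl{l}$.

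Third, and this is where the argument is genuinely delicate, I would verify that $\barp$ satisfies the $\DLR_\Lambda$ equation of Definition \ref{def:infinitegibbsmeasure} for each fixed $\Lambda \in \boreldom$ and each bounded local $F$. One writes the $\DLR$ identity for $\barpn$ (finite-volume Gibbs measures satisfy $\DLR$ by \cite{ar:DereudreIntro}, Proposition 5.3), up to a boundary error that vanishes as $n\to\infty$ because $\Lambda$ eventually lies deep inside $\Lambda_n$, and then passes to the limit on both sides. The left-hand side converges since a bounded local $F$ is $\tau_{\mathcal{L}}$-continuous. The difficulty is the right-hand side, the map $\xi \mapsto \int_\MsetLambda F(\gamma_\Lambda\xi_{\Lambda^c})\,\Xi_\Lambda(\xi,\drm\gamma)$: through the conditional energy $H_\Lambda(\gamma_\Lambda\xi_{\Lambda^c})$ this depends a priori on all of $\xi$, so it is \emph{not} a local function and $\tau_{\mathcal{L}}$-convergence does not apply directly.

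The resolution, and the main obstacle, is to localize the kernel using $\hr$. Restricting to $\xi$ with $\xi_{\Lambda^c}\in\Msetl{l}$ (a set of full $\barp$-measure as $l\to\infty$, by temperedness) and truncating the marks of $\gamma_\Lambda$ at a level $m_0$, the range assumption replaces $H_\Lambda(\gamma_\Lambda\xi_{\Lambda^c})$ by the genuinely local expression $H((\gamma_\Lambda\xi_{\Lambda^c})_{\Lambda\oplus B(0,\tau)}) - H((\gamma_\Lambda\xi_{\Lambda^c})_{\Lambda\oplus B(0,\tau)\setminus\Lambda})$, with $\tau = \tau(\supmark{\gamma_\Lambda},l,\Lambda)$ bounded once the marks are truncated and non-decreasing in $\supmark{\gamma_\Lambda}$. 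This turns the truncated integrand into a bounded local function of $\xi$, to which $\tau_{\mathcal{L}}$-convergence applies; the contribution of the discarded large marks is controlled uniformly in $n$ by the local stability bound $\hl$ combined with $\hmoment$, so that letting $m_0\to\infty$ and $l\to\infty$ after the limit in $n$ recovers the full $\DLR_\Lambda$ equation for $\barp$. The crucial check is precisely that the corrected $\hr$—with $\tau$ allowed to depend on $\Lambda$ and $l$, not merely on $\supmark{\gamma_\Lambda}$ and $l(t)$ as in the erroneous (\ref{def:wrongtau})—still yields a bounded localization radius on each truncated, tempered slice, which is all the compactness argument ever required.
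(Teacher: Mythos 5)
Your proposal is correct and follows essentially the same route as the paper: the paper adopts the Roelly--Zass proof wholesale (empirical fields, specific-entropy compactness in $\tau_{\mathcal{L}}$, temperedness of the limit, then the $\DLR$ verification via mark truncation and localization of the kernel), and only remarks that the two places where the range assumption enters --- the localization of $H_\Lambda$ and the boundary estimate --- still go through with the modified $\hr$, which is exactly the ``crucial check'' you identify at the end.
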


Concerning the use of the range assumption in the original proof in \cite{ar:RoellyZass}, we refer to the proof of formula (20) on page 990 and the estimation of $\delta_{24}$ on page 992, which remain the same.

\section{Gibbs facet process}\label{subsec:Facets}
The first model we consider will be the process of facets (presented in \cite{ar:VeceraBenesFasety}). For $d \geq 2$  denote by $\gd$ the space of all $(d-1)$-dimensional linear subspaces of $\rd$ and let $\unitsphere$ denote the unit sphere in $\rd$, then $A(n) \in \mathcal{G}_d$ denotes the linear subspace with unit normal vector $n \in \unitsphere$. Let $A(n) \in  \gd$ and $R > 0$. Then a \textit{ facet} $V(n,R)$ with  \textit{radius} $R$ and \textit{normal vector} $n$ is defined as $V(n,R) =  A(n) \cap B(0,R)$.

  As we can see from the definition above, each facet is uniquely described by its radius $R$ and its normal vector $n$ (up to the orientation of $n$). Therefore, it is natural to choose the space of marks as $(\markspace, \norm{\cdot}) = (\mathbb{R}^{d+1}, \norm{\cdot})$ with the standard Euclidean norm\footnote{We will use the notation $\norm{m}$ for the Euclidean norm when talking about mark $m$ from $\rd$ and $\abs{x}$ when talking about location point $x$ from $\rd$.} $\norm{m} = \sqrt{\sum_{i=1}^{d+1} m_i^2}$. The marks will be specified by choosing the reference mark distribution $\markdist$ so that it satisfies
$	\markdist(\hem\times(0,\infty)) = 1,$
where $\hem$ is the \textit{semi-closed unit hemisphere} in $\rd$,
$$\hem = \bigcup_{i=1}^{d}\{u \in \unitsphere: u_1 = 0, \cdots, u_{i-1}=0, u_i > 0\}.$$

Clearly, there exists a bijection between $\hem\times(0,\infty)$ and the set of all facets, the first $d$ coordinates define the normal vector for the corresponding facet and the last coordinate defines the radius.
Take $\gamma \in \Mset$, then we define the \textit{set of all facets} (shifted to their location) \textit{corresponding to configuration $\gamma$} as
	$
		\mathcal{A}(\gamma) = \{x +  V(n,R) : (x,n,R) \in \gamma \}
	$.
The energy of a configuration will depend on the number (and volume) of intersections among the facets. \textit{The energy function of a facet process} is defined as
	\begin{equation} \label{def:FceEnergieFacety}
		\begin{aligned}
			H(\gamma) &= \sum_{j=2}^{d} a_j \phi_j(\gamma), \text{ 	where $a_2, \dots, a_d \in \R$ and } \\
			\phi_j(\gamma) &= \sumne_{K_1, \dots, K_j \in \mathcal{A}(\gamma)} \mathbb{H}^{d-j} \left(\bigcap_{i=1}^j K_i\right) \cdot \ind \left[ \mathbb{H}^{d-j} (\bigcap_{i=1}^j K_i) < \infty \right].
		\end{aligned}
	\end{equation}
	Here, $\mathbb{H}^{k}$ denotes the $k$-dimensional Hausdorff measure on $\rd$ and $\sum_{...}^{\neq}$ denotes the sum over all $j$-tuples from $\mathcal{A}(\gamma)$. From now on, the indicator in (\ref{def:FceEnergieFacety}) will be denoted by $\ind_{\infty,j}$.

\subsection{Existence of Gibbs facet process with repulsive interactions}

To verify the existence of the Gibbs facet process, we must verify the assumptions of Theorem \ref{th:Existencetheorem}. Regarding the assumption $\mathcal{H}_m$, we have to choose the mark distribution $\markdist$ such that
\begin{align*}
	  \int_{\mathbb{S}^{d-1}_+\times (0,\infty)} \exp((1+R^2)^{d/2+\delta}) \markdist (\drm (n,R)) < \infty.
\end{align*}

To address the assumption $\hr$, we rewrite the definition of sets $\Msetl{\l}$  in the language of facets.
\begin{lemma} \label{lemma:FacetyVlastMl}
	Take $\gamma \in \Msettemp$, $\gamma \in \Msetl{\l_0}$. Then  $\forall \l \geq l_0$ and for all $K \in \mathcal{A}(\gamma)$ we have the following implication: $K \in \mathcal{A}(\ms{\gamma}{U(0,2\l+1)^c}) \implies K \cap U(0,\l) = \emptyset$.
\end{lemma}

\begin{proof}
	We know that the implication
	$(x,m) \in \ms{\gamma}{{U(0,2l+1)}^c} \implies U(0,l) \cap B(x,\norm{m}) = \emptyset$ holds $\forall l \geq \l_0$ (from the definition of $\Msetl{\l_0}$ ) and in our case $ B(x,\norm{m})= B(x,\sqrt{1+R^2})$. Clearly $K \subset B(x,\sqrt{1+R^2})$ and therefore $K \cap U(0,\l) = \emptyset$.
\end{proof}

\bigskip

Now we will show that the range assumption holds.

\begin{theorem} \label{th:FacetyRange}
	The energy function $H$ of a facet process defined in (\ref{def:FceEnergieFacety}) satisfies the range assumption $\hr$.
\end{theorem}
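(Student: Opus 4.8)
The plan is to identify the conditional energy $H_\Lambda(\gamma)$ with the sum of exactly those intersection contributions $\mathbb{H}^{d-j}(\bigcap_{i=1}^j K_i)\ind_\infty$ arising from $j$-tuples of facets in which at least one facet is generated by a point located in $\Lambda$, and then to confine all the nonzero such terms to a bounded window determined by $\supmark{\gamma_\Lambda}$, $l$ and $\Lambda$. First I would compute, for an arbitrary bounded window $W \supseteq \Lambda$, the difference $H(\gamma_W) - H(\gamma_{W\setminus\Lambda})$. Since $\mathcal{A}(\gamma_{W\setminus\Lambda})$ consists precisely of those facets of $\mathcal{A}(\gamma_W)$ generated by points with location in $W\setminus\Lambda$, for each $j$ the difference $\phi_j(\gamma_W)-\phi_j(\gamma_{W\setminus\Lambda})$ equals the sum over $j$-tuples from $\mathcal{A}(\gamma_W)$ that contain at least one facet generated in $\Lambda$; tuples lying entirely in $W\setminus\Lambda$ cancel.

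The heart of the argument is the geometric localization. As $\Lambda$ is bounded and $\gamma$ locally finite, $\gamma_\Lambda$ is finite, so $\supmark{\gamma_\Lambda}<\infty$, and each facet generated by $(x,n,R)\in\gamma_\Lambda$ lies in $B(x,\norm{m})=B(x,\sqrt{1+R^2})\subset\Lambda\oplus B(0,\supmark{\gamma_\Lambda})$. I would then pick $k=k(\supmark{\gamma_\Lambda},l,\Lambda)\in\N$ with $k\geq l$ and $\Lambda\oplus B(0,\supmark{\gamma_\Lambda})\subset U(0,k)$, so that every facet generated in $\Lambda$ is contained in $U(0,k)$; this $k$ can be chosen non-decreasing in $\supmark{\gamma_\Lambda}$. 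Now take a tuple $K_1,\dots,K_j$ contributing a nonzero term: then $\bigcap_{i=1}^j K_i\neq\emptyset$, and if $K_{i_0}$ is generated in $\Lambda$ any common point $p$ lies in $K_{i_0}\subset U(0,k)$, whence $K_i\cap U(0,k)\neq\emptyset$ for every $i$. For the $K_i$ generated outside $\Lambda$ I would invoke Lemma \ref{lemma:FacetyVlastMl} applied to $\gamma_{\Lambda^c}\in\Msetl{l}$ with level $k\geq l$, in its contrapositive form: a facet of $\mathcal{A}(\gamma_{\Lambda^c})$ meeting $U(0,k)$ cannot be generated outside $U(0,2k+1)$. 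Hence every facet of such a tuple is generated inside $U(0,2k+1)$.

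Finally I would fix $\tau=\tau(\supmark{\gamma_\Lambda},l,\Lambda)$ large enough that $U(0,2k+1)\subset\Lambda\oplus B(0,\tau)$ (for instance $\tau=2k+1+r_\Lambda$, where $\Lambda\subset U(0,r_\Lambda)$), which is non-decreasing in $\supmark{\gamma_\Lambda}$ because $k$ is. By the localization step, for every window $W\supseteq U(0,2k+1)$ the difference $H(\gamma_W)-H(\gamma_{W\setminus\Lambda})$ equals one and the same finite sum, namely the sum over $j$-tuples from $\mathcal{A}(\gamma_{U(0,2k+1)})$ having at least one facet generated in $\Lambda$ (tuples with a facet outside $U(0,2k+1)$ contribute zero, and $\mathcal{A}(\gamma_{U(0,2k+1)})$ is finite). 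Applying this to $W=\Lambda_n$ for $n$ large enough that $\Lambda_n\supseteq U(0,2k+1)$ shows the limit defining $H_\Lambda(\gamma)$ is eventually constant; applying it to $W=\Lambda\oplus B(0,\tau)$ gives the identical value, so that $H_\Lambda(\gamma)=H(\gamma_{\Lambda\oplus B(0,\tau)})-H(\gamma_{\Lambda\oplus B(0,\tau)\setminus\Lambda})$, which is the assertion of $\hr$.

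I expect the localization step to be the main obstacle: a facet generated in $\Lambda$ may carry a small mark yet still be met by a distant facet carrying a large mark, exactly the phenomenon that defeated the original choice of $\tau$ in Lemma \ref{lemma:protiprikladproHr}. The resolution is that in a tempered exterior such distant large facets are excluded by Lemma \ref{lemma:FacetyVlastMl}, provided one measures everything with balls centred at the origin (matching the tempered condition) and lets both $k$ and $\tau$ depend on $l$ as well as on $\supmark{\gamma_\Lambda}$, which is precisely what the reformulated $\hr$ allows. A subsidiary point to keep straight is that the tempering hypothesis is available only for $\gamma_{\Lambda^c}$, so Lemma \ref{lemma:FacetyVlastMl} must be applied to $\gamma_{\Lambda^c}$, the finitely many facets generated in $\Lambda$ being handled directly since they already lie in $U(0,k)$ by construction.
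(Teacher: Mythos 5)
Your proposal is correct and follows essentially the same route as the paper: the same decomposition of $H(\gamma_W)-H(\gamma_{W\setminus\Lambda})$ into tuples containing a facet generated in $\Lambda$, the same choice of a radius $k$ (the paper's $l_2$) with $\Lambda\oplus B(0,\supmark{\gamma_\Lambda})\subset U(0,k)$ and $k\geq l$, the same appeal to Lemma \ref{lemma:FacetyVlastMl} applied to $\gamma_{\Lambda^c}$, and the same $\tau$ with $U(0,2k+1)\subset\Lambda\oplus B(0,\tau)$. The only cosmetic difference is that you argue in the contrapositive (nonzero tuples are confined to $U(0,2k+1)$) where the paper shows directly that the excluded tuples contribute zero.
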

\begin{proof}
	Fix $\Lambda \in \boreldom$ and $\l_0\in \N$. We want to prove that for all $\gamma \in \Msettemp$ such that $\gamma_{\Lambda^c} \in \Msetl{\l_0}$ there exists $\tau = \tau(\supmark{\gamma_{\Lambda}},\l_0,\Lambda) > 0$ which is a non-decreasing function of $\supmark{\gamma_{\Lambda}}$ and for which it holds that
$H_\Lambda(\gamma) = H(\gamma_{\Lambda\oplus B(0,\tau)}) - H(\gamma_{(\Lambda\oplus B(0,\tau))\setminus\Lambda}).$

	Take $i_0 \in \N$ large enough so that $\Lambda \subset \Lambda_{i_0} = \left[-i_0,i_0\right) ^d$. From the definition of the conditional energy, we have that 	
	$	H_\Lambda(\gamma) = \lim_{n\to\infty} \left( H(\ms{\gamma}{\Lambda_n}) - H(\ms{\gamma}{\Lambda_n \setminus \Lambda}) \right).
	$
	
	We can write for all $n \geq i_0$
	\begin{align*}
		H(\ms{\gamma}{\Lambda_n}) - H(\ms{\gamma}{\Lambda_n \setminus \Lambda}) =  \sum_{j=2}^{d} &a_j \sumne_{K_1, \dots, K_j \in \mathcal{A}(\ms{\gamma}{\Lambda_n})} \mathbb{H}^{d-j} \left( \bigcap_{i=1}^j K_i\right) \cdot \ind_{\infty,j} \\
		& - \sum_{j=2}^{d} a_j \sumne_{K_1, \dots, K_j \in \mathcal{A}(\ms{\gamma}{\Lambda_n \setminus \Lambda})} \mathbb{H}^{d-j} \left( \bigcap_{i=1}^j K_i\right)  \cdot \ind_{\infty,j}.
	\end{align*}
Now define for general sets $A\subset B,\, A, B \in \boreldom$, and for $j = 2, \dots, d$ the set of all $j$-tuples of points from $\gamma$ in $B$ (or more specifically the set of (non-ordered) $j$-tuples of facets represented by these points) such that at least one of these points lies in $A$:
	\begin{equation*}
		\begin{aligned}
			\mathcal{C}_j(\gamma, A,B) = \{ \{K_1, \dots, K_j\} : K_i \in \mathcal{A}(\gamma_B) &\text{ for all } i = 1,\dots,j \text{ and } \exists i \text{ s.t. } K_i \in \mathcal{A}(\gamma_A)\}.
		\end{aligned}
	\end{equation*}
	Then for any $\tau > 0$ and $n$ large enough so that $\Lambda \oplus B(0,\tau) \subset \Lambda_n$ we can write
	
	\begin{equation}\label{vzPomR}
		\begin{aligned}
			&H(\ms{\gamma}{\Lambda_n}) - H(\ms{\gamma}{\Lambda_n \setminus \Lambda}) = \sum_{j=2}^{d} a_j \sum_{\{K_1, \dots, K_j\} \in \mathcal{C}_j(\gamma, \Lambda, \Lambda \oplus B(0,\tau))} \mathbb{H}^{d-j} \left( \bigcap_{i=1}^j K_i\right) \cdot \ind_{\infty,j} \\
			& \qquad \qquad + \sum_{j=2}^{d} a_j \sum_{\{K_1, \dots, K_j\} \in \mathcal{C}_j(\gamma, \Lambda,\Lambda_n)\setminus \mathcal{C}_j(\gamma, \Lambda,\Lambda \oplus B(0,\tau))} \mathbb{H}^{d-j} \left( \bigcap_{i=1}^j K_i\right) \cdot \ind_{\infty,j}.
		\end{aligned}
	\end{equation}
	Clearly, the first sum does not depend on $n$ and it is in fact equal to the desired $H(\gamma_{\Lambda\oplus B(0,\tau)}) - H(\gamma_{(\Lambda\oplus B(0,\tau)) \setminus \Lambda)})$. Therefore, it is sufficient to show that for the right choice of $\tau$ each summand in the second sum is $0$. Consider the following steps.

	\begin{enumerate}
		\item We have $\l_1(\supmark{\gamma_{\Lambda}},\Lambda) = \min \{\l \in \N: \Lambda\oplus B(0,\supmark{\gamma_\Lambda}) \subset U(0,\l) \} < \infty$,  since $\supmark{\gamma_\Lambda}$ is finite.
		\item Let $\l_2(\supmark{\gamma_{\Lambda}},l_0,\Lambda) = \text{max}\{\l_0,\l_1(\supmark{\gamma_{\Lambda}},\Lambda)\}$.
		\item Take
		$\tau(\supmark{\gamma_{\Lambda}},l_0,\Lambda) = \min \{ k \in \N: U(0,2\l_2(\supmark{\gamma_{\Lambda}},l_0,\Lambda)+1) \subset \Lambda \oplus B(0,k) \}.$
	\end{enumerate}
	
	Then clearly for $a < b$ we have that $\tau(a,l_0,\Lambda) \leq \tau(b,l_0,\Lambda)$. Now, let $n_0$ be the smallest $n$ such that $\Lambda \oplus B(0,\tau(\supmark{\gamma_{\Lambda}},l_0,\Lambda)) \subset \Lambda_n$. Let $n \geq n_0$ and fix $j \in \{2, \dots, d\}$. For simplicity, denote $\tau = \tau(\supmark{\gamma_{\Lambda}},l_0,\Lambda)$ and $l_2 = \l_2(\supmark{\gamma_{\Lambda}},l_0,\Lambda)$ from the second step in the definition of $\tau$.
	
	Take $\{K_1, \dots, K_j\} \in \mathcal{C}_j(\gamma, \Lambda,\Lambda_n) \setminus \mathcal{C}_j(\gamma, \Lambda, \Lambda \oplus B(0,\tau))$. From the definition of $\mathcal{C}_j$ there exist indices $i,k \in \{1, \dots, j\}, i \neq k$ such that $K_i  = x +  V(n,R) \in \mathcal{A}(\gamma_\Lambda)$ and $K_k \in \mathcal{A}(\gamma_{\Lambda_n \setminus \Lambda \oplus B(0,\tau) })$.

	In particular, considering the choice of $\tau$ above, it holds that
 $K_i \subset U(0,\l_2)$ (from the first and second step),
 $K_k \in \mathcal{A}(\ms{\gamma}{U(0,2\l_2+1)^c})$ (from the third step) and
 $\l_2 \geq \l_0$ (from the second step). We get from Lemma \ref{lemma:FacetyVlastMl} that $K_i \cap K_k = \emptyset$ and so $\mathbb{H}^{d-j} \left( \bigcap_{i=1}^j K_i\right) = 0$.
	This holds for all $\{K_1, \dots, K_j\} \in \mathcal{C}_j(\gamma, \Lambda,\Lambda_n) \setminus \mathcal{C}_j(\gamma, \Lambda, \Lambda \oplus B(0,\tau))$ and therefore we have for all $ n \geq n_0$ and for $\tau = \tau(\supmark{\gamma_{\Lambda}},l_0,\Lambda)$:
	$$
	H(\ms{\gamma}{\Lambda_n}) - H(\ms{\gamma}{\Lambda_n \setminus \Lambda}) \overset{(\ref{vzPomR})}{=} \sum_{j=2}^{d} a_j \sum_{\{K_1, \dots, K_j\} \in \mathcal{C}_j(\gamma, \Lambda, \Lambda \oplus B(0,\tau))} \mathbb{H}^{d-j} \left( \bigcap_{i=1}^j K_i\right) \cdot \ind_{\infty,j}.
	$$
\end{proof}

\bigskip

Consider now a situation where the constants $a_j$ in the definition of the energy function of facet process satisfy $a_j \geq 0$ for all $j =2, \dots, d$. This leads to repulsive interactions between the facets, i.\,e., configurations with a lot of interacting facets will have higher energy, compared to those with disjoint facets, and they will be therefore less probable. We have the following existence result.

\begin{theorem}
	Let the energy function (\ref{def:FceEnergieFacety}) of a facet process satisfy $a_j\geq 0,$ for all $ j \in \{2, \dots, d\}$ and assume that the reference mark distribution $\markdist$ satisfies $\hmoment$. Then the infinite-volume Gibbs facet process exists.
\end{theorem}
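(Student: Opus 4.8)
The plan is to verify the four hypotheses of Theorem~\ref{th:Existencetheorem}---the moment assumption $\hmoment$, the stability assumption $\hs$, the local stability assumption $\hl$, and the range assumption $\hr$---and then to invoke that theorem directly. Two of these are already in hand: $\hmoment$ is assumed outright in the statement, and $\hr$ was established for the facet energy in Theorem~\ref{th:FacetyRange}, with no use of the sign of the coefficients $a_j$. So the entire task reduces to the two stability conditions, and here the hypothesis $a_j \geq 0$ is precisely what makes them immediate.

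First I would observe that under $a_j \geq 0$ the energy is non-negative. Each $\phi_j(\gamma)$ is a sum of terms $\mathbb{H}^{d-j}(\bigcap_{i=1}^j K_i)\cdot\ind_\infty \geq 0$, so $\phi_j(\gamma) \geq 0$, and since $a_j \geq 0$ we get $H(\gamma) = \sum_{j=2}^d a_j \phi_j(\gamma) \geq 0$ for every $\gamma \in \Msetf$. The stability assumption $\hs$ then holds with $c = 0$, because $\langle \gamma, 1 + \norm{m}^{d+\delta}\rangle \geq 0$ forces $H(\gamma) \geq 0 \geq -c\langle\gamma, 1 + \norm{m}^{d+\delta}\rangle$.

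For the local stability $\hl$ I would appeal to Observation~\ref{claim:LokStabNerapornost}, whose hypothesis is monotonicity of $H$ under restriction: $H(\gamma_A) - H(\gamma_B) \geq 0$ whenever $B \subset A$. This is where the small amount of genuine work lies. When $B \subset A$ one has $\mathcal{A}(\gamma_B) \subset \mathcal{A}(\gamma_A)$, so every $j$-tuple of facets entering the sum defining $\phi_j(\gamma_B)$ also appears in the sum defining $\phi_j(\gamma_A)$, and each such tuple contributes the identical value $\mathbb{H}^{d-j}(\bigcap_i K_i)\cdot\ind_\infty$ in both. Since all contributions are non-negative and the larger configuration merely carries additional tuples, $\phi_j(\gamma_A) \geq \phi_j(\gamma_B)$ for each $j$; multiplying by $a_j \geq 0$ and summing gives $H(\gamma_A) \geq H(\gamma_B)$. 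By Observation~\ref{claim:LokStabNerapornost} the conditional energy is then non-negative and $\hl$ holds.

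Having verified $\hmoment$, $\hs$, $\hl$ and $\hr$, I would conclude by Theorem~\ref{th:Existencetheorem} that an infinite-volume Gibbs facet process exists. There is no serious obstacle: the content of the result is simply that repulsiveness ($a_j \geq 0$) forces both non-negativity and monotonicity of the energy, which supply the two stability conditions directly. The only point requiring a moment's care is the bookkeeping in the monotonicity argument---checking that passing from $\gamma_B$ to $\gamma_A$ adds tuples without altering or removing the contribution of the existing ones, so that no cancellation can occur.
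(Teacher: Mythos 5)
Your proposal is correct and follows exactly the paper's own argument: $\hs$ from non-negativity of $H$, $\hl$ via Observation~\ref{claim:LokStabNerapornost} (whose hypothesis the paper declares to hold ``clearly'' while you spell out the tuple-counting monotonicity argument), $\hr$ from Theorem~\ref{th:FacetyRange}, and $\hmoment$ by assumption, then Theorem~\ref{th:Existencetheorem}. The only difference is that you make explicit the verification that passing from $\gamma_B$ to $\gamma_A$ only adds non-negative summands, which the paper leaves implicit.
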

\begin{proof}
	In this case, the energy function $H$ is non-negative and, therefore, the stability assumption $\hs$ holds. Since $H$ clearly satisfies observation \ref{claim:LokStabNerapornost}, the local stability assumption $\hl$ also holds. Theorem \ref{th:FacetyRange} shows that also the range assumption $\hr$ is satisfied and therefore the assumptions of Theorem \ref{th:Existencetheorem} hold and the existence is proven.	\\
	\makebox{}
\end{proof}

\subsection{The counterexample for attractive interactions in $\rtwo$}

Consider the facet process in $\mathbb{R}^2$, i.\,e., the energy function is
\begin{equation} \label{vzFceEnFac2}
	H(\gamma) = a_2 \sumne_{K_1, K_2 \in \mathcal{A}(\gamma)} \mathbb{H}^{0} \left( K_1 \cap K_2 \right)  \cdot \ind_{\infty,2}.
\end{equation}
Suppose that $a_2 < 0$ (we can assume for simplicity that $a_2 = -1$). This leads to attractive interactions between the facets. We will show that the finite-volume Gibbs measures do not exist. The first step will be to find a sequence $\{\gamma_N\}_{N\in \N} \subset \Msetf$ contradicting the stability assumption $\hs$. In the second step, we show that we can modify these configurations (under some mild assumptions on the mark distribution $\markdist$) to form a~sequence of subsets $A_{\Lambda,N} \subset \Msetf$ for some $\Lambda \in \boreldvaom$, such that $\poislambdaz(A_{\Lambda,N}) > 0$, $\forall N \in \N$, and $\hs$ does not hold on $\bigcup_{N \in \N} A_{\Lambda,N}$. In the final step, we use sets $A_{\Lambda,N}$ to show that the partition function $Z_\Lambda$ is infinite.

\medspace

\noindent\emph{Step 1)} 

Consider the following lemma.
\begin{lemma}
	The energy function of a facet process in $\rtwo$ (i.\,e., (\ref{vzFceEnFac2})) does not satisfy the stability assumption $\hs$ for $a_2 < 0$.
\end{lemma}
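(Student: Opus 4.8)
The plan is to refute $\hs$ directly by exhibiting a sequence $\{\gamma_N\}_{N \in \N} \subset \Msetf$ along which the energy is driven to $-\infty$ quadratically in the number of facets, while the control quantity $\left\langle \gamma_N, 1 + \norm{m}^{2+\delta} \right\rangle$ grows only linearly. Since stability demands a \emph{single} constant $c \geq 0$ with $H(\gamma) \geq -c \left\langle \gamma, 1 + \norm{m}^{2+\delta}\right\rangle$ for every finite configuration, producing such a sequence immediately shows that no finite $c$ can exist.

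For the construction I would fix a radius $R > 0$ (say $R = 1$) and build $\gamma_N$ from $N$ facets all passing through a common point, which I take to be the origin. Concretely, I choose $N$ pairwise distinct normals $n_1, \dots, n_N \in \hemdva$, so that the corresponding segment directions are pairwise distinct, and I place the $i$-th facet with its location $x_i$ on its own line at distance less than $R$ from the origin (e.g.\ $x_i = -\tfrac{R}{2} u_i$ with $u_i \perp n_i$). This keeps all locations pairwise distinct, so $\gamma_N$ is a legitimate simple configuration in $\Mset$, while forcing every segment to contain the origin. Because the $N$ lines through the origin are pairwise distinct, any two facets $K_i, K_j$ meet in exactly the single point $\{0\}$, whence $\mathbb{H}^0(K_i \cap K_j) = 1 < \infty$ and the indicator $\ind_\infty$ equals $1$ for every pair.

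The two estimates then follow by counting. Every one of the $\Theta(N^2)$ pairs contributes a single intersection point, so $H(\gamma_N) = a_2 \cdot \Theta(N^2)$, which (as $a_2 < 0$) equals $-\Theta(N^2)$. On the other hand each mark has the constant norm $\norm{m_i} = \sqrt{1 + R^2}$, so $\left\langle \gamma_N, 1 + \norm{m}^{2+\delta}\right\rangle = N\bigl(1 + (1+R^2)^{(2+\delta)/2}\bigr)$ grows only linearly in $N$. Hence the ratio $H(\gamma_N) / \left\langle \gamma_N, 1 + \norm{m}^{2+\delta}\right\rangle$ tends to $-\infty$, and for any prescribed $c \geq 0$ one can choose $N$ large enough to violate the stability inequality.

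I expect the only delicate points to be bookkeeping rather than substance: first, verifying that the locations $x_i$ can indeed be taken pairwise distinct while all segments still pass through the origin (handled by the choice above, using distinct directions and sub-radius offsets, noting that distinct normals in $\hemdva$ give distinct lines); and second, keeping track of the exact combinatorial constant in the pair count, i.e.\ whether $\sumne$ runs over ordered or unordered pairs, which affects only the constant in front of $N(N-1)$ and not the quadratic-versus-linear comparison that drives the argument. The essential mechanism — that attractive interactions let pairwise intersections accumulate quadratically while the tempering weight grows only linearly — is what makes stability fail.
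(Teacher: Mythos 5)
Your proof is correct and follows essentially the same strategy as the paper: exhibit a sequence of finite configurations whose pairwise intersection count grows quadratically in the number of facets while $\left\langle \gamma_N, 1+\norm{m}^{2+\delta}\right\rangle$ grows only linearly, so no single stability constant $c$ can work. The only difference is the witness configuration: you use a pencil of $N$ concurrent segments through the origin, whereas the paper uses two bundles of $N/2$ parallel facets with two distinct normals, placed so that every cross pair intersects --- a deliberately non-degenerate choice that the paper then reuses in Step 2 to build a set of such configurations with positive $\poislambdaz$-probability, something your measure-zero concurrent construction would need to be perturbed to achieve.
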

\begin{proof}
	Take $N \in \N$ even, $n_1, n_2 \in \hemdva$ and $R > 0$ and $\gamma_N \in \Msetf$ satisfying
	\begin{itemize}
		\item[i)] $\textbf{supp }\gamma_N = \{(x_1, n_1, R), \dots, (x_{\frac{N}{2}}, n_1, R), (x_{\frac{N}{2}+1}, n_2, R), \dots ,(x_{N}, n_2, R) \},$
		\item[ii)] normal vectors $n_1, n_2 \in \hemdva$ satisfy $n_1 \neq n_2$,
		\item[iii)] the location points satisfy $x_i = (x_i^1,0)^T$, where $1 = x_1^1 > x_2^1 > \dots > x_{\frac{N}{2}}^1 > 0$ and $-1 = x_{\frac{N}{2}+1}^1 < x_{\frac{N}{2}+2}^1 < \dots < x_{N}^1 < 0$,
		\item[iv)] $R > 0$ is a sufficiently large constant (depending on $n_1,n_2$) such that the facets  $x_1~+~V(n_1,R)$ and $x_{\frac{N}{2}+1} +  V(n_2,R)$ intersect.
	\end{itemize}

	 It holds for these configurations that each facet given by the points $(x_i, n_1, R)$, $i~\in~\{1, \dots, \frac{N}{2}\}$, intersects all facets given by the second half of the points and there are no intersections within the first half and within the second half. So, we have that
	\begin{equation*}
		H(\gamma_N) = - \sumne_{K_1, K_2 \in \mathcal{A}(\gamma_N)} \mathbb{H}^{0} \left( K_1 \cap K_2 \right)  \cdot \ind_{\infty,2} = - \frac{N}{2} \cdot \frac{N}{2}.
	\end{equation*}
	At the same time
$	\left< \gamma_N,1 + \norm{m}^{2+\delta} \right> = \sum_{i = 1}^{N} (1+ (1+R^2)^{1+ \frac{\delta}{2}}) = N \cdot (1+ (1+R^2)^{1+ \frac{\delta}{2}}).
$
	Denote by $b := (1+ (1+R^2)^{1+ \frac{\delta}{2}}) < \infty$ the constant, which does not depend on $N$.
	Assume for contradiction that $\hs$ holds, i.\,e., there exists $ c > 0$ such that $\forall \gamma \in \Msetf$ we have $H(\gamma) \geq -c \left< \gamma,1 + \norm{m}^{2+\delta} \right>$. Then we get that $\forall N \in \N$ even
$
		- \frac{N}{2} \cdot \frac{N}{2} \geq - c\cdot N \cdot b,
$
	which is clearly a contradiction.
\end{proof}

\bigskip

\noindent\emph{Step 2)} 

From now on, we assume that there exist vectors $u,v \in \hemdva$, some constants $0 < a \leq b < \infty$ and $\eps > 0$ such that
\begin{equation}
	\begin{aligned} \label{podmnaRprotipriklad}
		\markdist(U(u \pm \eps)\times(a,b)) > 0, \, \markdist(U(v \pm \eps)\times(a,b)) > 0 \text{ and }
		U(u \pm \eps) \cap U(v \pm \eps) = \emptyset,
	\end{aligned}
\end{equation}
where $U(u \pm \eps) = \{w \in \hemdva : \abs{\sphericalangle(u,w)} \leq \eps \}$ (here $\sphericalangle (u,w)$ denotes the angle between the vectors $u$ and $w$).  We are able to find a set $\Lambda$ such that if two facets have centres inside $\Lambda$, their normal vectors do not differ too much from $u$
and $v$, respectively, and their length is at least $a$, then they must intersect at one point.

\begin{lemma} \label{pomlemma:volbaLambda}
	For given constants $a, \eps > 0$ and two different vectors $u,v \in \hemdva$ there exists a set $\Lambda \in \boreldvaom$ such that
	$$\abs{(x +  V(n,R)) \cap (y +  V(m,T))} = 1$$
	holds for all $ x,y \in \Lambda, x \neq y$, $n \in U(u \pm \eps)$, $m \in U(v \pm \eps)$ and $R>a,T>a$.
\end{lemma}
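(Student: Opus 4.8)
The plan is to exploit the one-dimensional nature of facets in the plane. First I would unwind the definition for $d=2$: here $A(n)$ is the line through the origin orthogonal to $n$, so $x+V(n,R)$ is the closed segment of half-length $R$ centred at $x$ with direction $n^{\perp}$, the unit vector obtained by rotating $n$ through $\pi/2$. Thus the statement amounts to finding a window $\Lambda$ so small that any two such segments, with centres in $\Lambda$, directions drawn from the two prescribed angular windows, and half-lengths larger than $a$, must cross in exactly one point.

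The first step will be to secure a uniform transversality bound. Since $u\neq v$ lie in $\hemdva$ and $\eps$ is small enough that $U(u\pm\eps)\cap U(v\pm\eps)=\emptyset$ (as assumed in (\ref{podmnaRprotipriklad})), the reverse triangle inequality for angles gives $\abs{\sphericalangle(n,m)}\ge\abs{\sphericalangle(u,v)}-2\eps>0$ for all $n\in U(u\pm\eps)$ and $m\in U(v\pm\eps)$; moreover, as $n$ and $m$ are distinct vectors of the semicircle $\hemdva$, their angle stays strictly below $\pi$. Since rotation by $\pi/2$ preserves angles, the segment directions $n^{\perp},m^{\perp}$ enclose the same angle $\theta:=\abs{\sphericalangle(n,m)}$, which therefore ranges over a compact subinterval of $(0,\pi)$; hence $\sin\theta\ge c_0$ for some constant $c_0=c_0(u,v,\eps)>0$. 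In particular the two lines carrying the segments are never parallel and meet in a unique point $p$.

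The heart of the argument is to localize $p$. Writing $p=x+s\,n^{\perp}=y+t\,m^{\perp}$ and solving this $2\times2$ system (Cramer's rule, the determinant having modulus $\sin\theta$) gives $\abs{s}\le\abs{y-x}/\sin\theta$ and $\abs{t}\le\abs{y-x}/\sin\theta$, so that both $\abs{p-x}$ and $\abs{p-y}$ are at most $\mathrm{diam}(\Lambda)/c_0$. This is the key estimate: because the crossing angle is bounded away from $0$, the intersection point cannot wander far from the two centres once the centres are close together.

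It then remains to choose $\Lambda$. Take any $\Lambda\in\boreldvaom$ with $\mathrm{diam}(\Lambda)\le a\,c_0$, for instance $\Lambda=B(0,a c_0/2)$. Then for centres $x,y\in\Lambda$ we obtain $\abs{p-x}\le a<R$ and $\abs{p-y}\le a<T$, so $p$ lies strictly inside both segments; as the two lines meet only at $p$, the segments meet exactly there and $\abs{(x+V(n,R))\cap(y+V(m,T))}=1$. The only real obstacle is the localization estimate of the third paragraph — converting the lower bound $a$ on the half-lengths together with the uniform transversality $\sin\theta\ge c_0$ into the explicit smallness requirement on $\mathrm{diam}(\Lambda)$; the remaining points are routine planar geometry of segments.
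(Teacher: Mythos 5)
Your argument is correct and is essentially the paper's proof in quantitative form: the paper likewise reduces the claim to showing that the intersection point $P(x,y,n,m)$ of the two carrier lines lies within distance $a$ of both centres once $\Lambda$ is small, obtaining this from continuity and compactness of the distance functions $f_1,f_2$ on $\Lambda_0\times\Lambda_0\times U(u\pm\eps)\times U(v\pm\eps)$ together with the homogeneity $f_i(sx,sy,n,m)=s\,f_i(x,y,n,m)$, whereas you get the same linear-in-$\mathrm{diam}(\Lambda)$ bound explicitly via Cramer's rule as $\abs{p-x}\le\abs{y-x}/\sin\theta$. Both versions rest on the same uniform transversality of the two angular windows (your $\sin\theta\ge c_0$, the paper's compactness of the product set), so the proposal follows the paper's route.
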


\begin{proof}
	Set $\Lambda_0 = \left[-1,1\right]^2$ and take $x, y \in \Lambda_0$, $ n \in U(u \pm \eps)$, $n = (n_1,n_2)^T$, and $m \in U(v \pm \eps)$, $m = (m_1,m_2)^T$. We will denote by $\dotproduct{x}{y}$ the standard dot product on~$\rtwo$. Denote by
	$
	p(x,n) = \{z \in \rtwo : \dotproduct{z}{n} = \dotproduct{n}{x}\}
	$ the line given by a point $x$ and a normal vector $n$ and analogously the line $p(y,m)$ given by a point $y$ and a normal vector $m$. Then, due to the assumption (\ref{podmnaRprotipriklad}), $n \neq \pm m$ and these two lines intersect at one point
	$P(x,y,n,m) = A^{-1}b,$ where $b = (\dotproduct{n}{x},\dotproduct{m}{y})^T$ and
	$$A = \begin{pmatrix}
		n_1 & n_2  \\
		m_1 & m_2
	\end{pmatrix}.$$
	Then we can define a function $f_1$ as the distance from point $x$ to the intersection $P(x,y,n,m)$,
	$$
	f_1(x,y,n,m) = \norm{x - P(x,y,n,m)}.
	$$
	This is a continuous function on $\Lambda_0 \times \Lambda_0 \times U(u\pm \eps ) \times U(v\pm \eps )$, which is a compact subset of $\mathbb{R}^8$. Therefore, the function $f_1$ has a maximum $M_1$ on this set. Analogously, we can define $f_2$ as the distance from point $y$ to the intersection $P(x,y,n,m)$ and there exists its maximum $M_2$ on $\Lambda_0 \times \Lambda_0 \times U(u\pm \eps ) \times U(v\pm \eps )$.
	Now, we only need the following observation. Take any $s > 0$, then
	\begin{align*}
		f_1(sx,sy,n,m) &= \norm{sx - P(sx,sy,n,m)} =
		\norm{sx - A^{-1}(\dotproduct{n}{sx},\dotproduct{m}{sy})^T} \\ &= s \norm{x - A^{-1}b}
		= s f_1(x,y,n,m).
	\end{align*}
	Therefore, the maximum of $f_1$ on $s\Lambda_0 \times s\Lambda_0 \times U(u\pm \eps ) \times U(v\pm \eps )$ is $sM_1$ and analogously the maximum of $f_2$ on $s\Lambda_0 \times s\Lambda_0 \times U(u\pm \eps ) \times U(v\pm \eps)$ is $sM_2$. Now it is enough to find $s > 0$ small enough such that $\max\{sM_1, sM_2\} < a$ and take $\Lambda = s\Lambda_0 = \left[-s,s\right]^2$.
\end{proof}

\bigskip

Now we take $\Lambda$ from Lemma \ref{pomlemma:volbaLambda} and denote
\begin{align*}
	G_u = \Lambda \times U(u \pm \eps)\times(a,b) &\text{ and } \Gamma_u = \left(z\lambda_\Lambda\otimes\markdist\right)(G_u),\\
	G_v = \Lambda \times U(v \pm \eps)\times(a,b) &\text{ and } \Gamma_v = \left(z\lambda_\Lambda\otimes\markdist\right)(G_v),\\
	D = \Lambda \times \markspace \setminus (G_u \cup G_v) &\text{ and } \Delta = \left(z\lambda_\Lambda\otimes\markdist\right)(D).
\end{align*}
Then we define, $\forall k \in \N$, the following set of configurations
\begin{equation} \label{vz:MnozinyALambdaN}
	A_{\Lambda, 2k} = \{\gamma \in \Msetf: \abs{\gamma} = 2k,\, \gamma (G_u) = k,\, \gamma (G_v) = k \} \subset \MsetLambda.
\end{equation}
Due to the assumption (\ref{podmnaRprotipriklad}), it holds that
\begin{align}\label{plstAN}
	\poislambdaz (A_{\Lambda,2k}) = e^{-\Delta} \cdot e^{-\Gamma_u} \cdot \frac{\Gamma_u^{k}}{k!} \cdot e^{-\Gamma_v} \cdot \frac{\Gamma_v^{k}}{k!} > 0
\end{align}
and thanks to Lemma \ref{pomlemma:volbaLambda} we have that $\forall k \in \N$  and $\forall \gamma \in A_{\Lambda, 2k}$
\begin{equation}\label{vz8}	
		H(\gamma) = - \sumne_{K_1, K_2 \in \mathcal{A}(\gamma)} \mathbb{H}^{0} \left( K_1 \cap K_2 \right)  \cdot \ind_{\infty,2} = - k \cdot k.
\end{equation}

\medspace

\noindent\emph{Step 3)} 

So far, we have only shown that the assumption $\hs$ (and consequently $\hl$) is not satisfied for negative $a_2$ and therefore we cannot use Theorem \ref{th:Existencetheorem}. Now we prove that the finite-volume Gibbs measures do not exist.

\begin{theorem}
	Let $a_2 <0$ and assume that the mark distribution $\markdist$ satisfies (\ref{podmnaRprotipriklad}). Then it holds that $Z_{\tilde{\Lambda}} = + \infty$, $\forall \tilde{\Lambda} \in \boreldvaom$, and therefore the finite-volume Gibbs measures do not exist.
\end{theorem}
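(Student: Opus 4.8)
The plan is to bound $Z_{\tilde\Lambda}$ from below by the mass that $\poislambdaz$, reweighted by $e^{-H}$, places on the strongly attractive configurations built in Step 2 (suitably adapted to the window $\tilde\Lambda$), and then to observe that this lower bound is already a divergent series. The mechanism is that on the sets of $2k$ points the energy is of order $-k^2$, so the Gibbs weight $e^{-H}\ge e^{k^2}$ grows super-exponentially, whereas the Poisson probability of seeing exactly $k$ points in each of the two mark regions decays only like $(\Gamma_u\Gamma_v)^k/(k!)^2$; the former wins.

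First I would treat the specific $\Lambda$ produced by Lemma \ref{pomlemma:volbaLambda}. The sets $A_{\Lambda,2k}$ from \eqref{vz:MnozinyALambdaN} are pairwise disjoint (they have distinct total numbers of points), so restricting the integral to their union and inserting \eqref{vz8} and \eqref{plstAN} gives
$$ Z_\Lambda \;\ge\; \sum_{k=1}^\infty \int_{A_{\Lambda,2k}} e^{-H(\gamma_\Lambda)}\,\poislambdaz(\drm\gamma) \;=\; e^{-\Delta-\Gamma_u-\Gamma_v}\sum_{k=1}^\infty e^{k^2}\,\frac{(\Gamma_u\Gamma_v)^k}{(k!)^2}. $$
Since $\log\!\big(e^{k^2}(\Gamma_u\Gamma_v)^k/(k!)^2\big)=k^2+k\log(\Gamma_u\Gamma_v)-2\log(k!)$ and $\log(k!)=O(k\log k)$, the general term tends to $+\infty$; hence the series diverges and $Z_\Lambda=+\infty$.

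The real work is to promote this from the single constructed window $\Lambda$ to an arbitrary $\tilde\Lambda\in\boreldvaom$, and this is the step I expect to be the main obstacle. I may assume $\lambda(\tilde\Lambda)>0$, since otherwise $\poislambdaz$ is concentrated on $\zeromeasure$ and $Z_{\tilde\Lambda}=1$, the window being too small to exhibit any interaction. The key observation is that Lemma \ref{pomlemma:volbaLambda} only requires the facet centres to lie in a set of sufficiently small diameter, and that facet intersection is translation covariant: for any shift $c$, the facets $x+V(n,R)$ and $y+V(m,T)$ meet iff their $(-c)$-translates do. Consequently the conclusion of Lemma \ref{pomlemma:volbaLambda} persists for centres lying in any translate of the cube $[-s,s]^2$ built there. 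I would therefore tile $\rtwo$ by translates of $[-s,s]^2$; since $\lambda(\tilde\Lambda)>0$, at least one tile $Q$ has $\lambda(\tilde\Lambda\cap Q)>0$. Setting $\Lambda':=\tilde\Lambda\cap Q\subseteq\tilde\Lambda$, I redefine $G_u=\Lambda'\times U(u\pm\eps)\times(a,b)$, $G_v=\Lambda'\times U(v\pm\eps)\times(a,b)$, $D=\tilde\Lambda\times\markspace\setminus(G_u\cup G_v)$, together with $\Gamma_u,\Gamma_v,\Delta$ and $A_{\tilde\Lambda,2k}$ exactly as before, but now with $\tilde\Lambda$ as the window. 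By \eqref{podmnaRprotipriklad} and $\lambda(\Lambda')>0$ we get $\Gamma_u,\Gamma_v>0$, so the computation \eqref{plstAN} still yields $\poislambdaz(A_{\tilde\Lambda,2k})>0$.

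Finally I would verify the energy bound on $A_{\tilde\Lambda,2k}$. Each pair consisting of one facet with normal near $u$ and one with normal near $v$ has both centres in $\Lambda'$, so by Lemma \ref{pomlemma:volbaLambda} the two meet in exactly one point (they cannot overlap in a segment, as $u\neq v$); summing over such pairs reproduces the value $-k^2$ found in \eqref{vz8}. Every remaining pair contributes a nonpositive amount, because $a_2<0$, $\mathbb{H}^0\ge 0$, and the pairs removed by $\ind_\infty$ contribute $0$. Hence $H(\gamma)\le -k^2$ and $e^{-H(\gamma)}\ge e^{k^2}$ for every $\gamma\in A_{\tilde\Lambda,2k}$, and the lower bound of the second paragraph gives $Z_{\tilde\Lambda}\ge e^{-\Delta-\Gamma_u-\Gamma_v}\sum_k e^{k^2}(\Gamma_u\Gamma_v)^k/(k!)^2=+\infty$. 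As a finite-volume Gibbs measure requires $0<Z_{\tilde\Lambda}<\infty$, none can exist. I note that replacing the exact identity $H=-k^2$ by the inequality $H\le -k^2$ (using only the sign of $a_2$) is what lets the extension step go through without having to re-examine intersections within the same mark group on the irregular set $\Lambda'$.
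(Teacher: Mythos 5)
Your proposal is correct, and the core mechanism is exactly the paper's: restrict the partition function to the sets $A_{\cdot,2k}$ of $2k$ points split evenly between the two mark regions, use Lemma \ref{pomlemma:volbaLambda} to force $e^{-H}\ge e^{k^2}$ there, and beat the Poisson factor $(\Gamma_u\Gamma_v)^k/(k!)^2$ via Stirling. (Summing the disjoint events over $k$ versus letting a single term tend to infinity, as the paper does, is immaterial.) The one place where you genuinely diverge is the promotion to an arbitrary window $\tilde\Lambda$: the paper invokes translation invariance and revisits the scaling argument of Lemma \ref{pomlemma:volbaLambda} to arrange $\Lambda\subset\tilde\Lambda$ for a possibly shrunken square $\Lambda$, which tacitly assumes $\tilde\Lambda$ contains a small square; you instead tile $\rtwo$ by translates of $[-s,s]^2$, pick a tile $Q$ with $\lambda(\tilde\Lambda\cap Q)>0$, and run the construction on the irregular set $\Lambda'=\tilde\Lambda\cap Q$, using translation covariance of facet intersection. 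Your version handles bounded Borel windows of positive measure with empty interior, which the paper's rescaling step does not literally cover, but it requires (as you noticed) downgrading the exact identity $H=-k^2$ of \eqref{vz8} to the inequality $H\le -k^2$, since within-group intersections on $\Lambda'$ are no longer controlled; that inequality is all the lower bound needs, and in fact the strict equality in \eqref{vz8} is itself only guaranteed up to such same-group contributions. Both routes are sound; yours buys a little extra generality and rigor in the extension step at no real cost.
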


\begin{proof}
	Take $\Lambda$ from Lemma \ref{pomlemma:volbaLambda} and $A_{\Lambda,2k}$, $k \in \N$, defined in (\ref{vz:MnozinyALambdaN}). Then $\forall k \in \N$
	\begin{align*}
		Z_\Lambda  &= \int_\MsetLambda e^{-H(\gamma)} \poislambdaz(\drm \gamma) \geq  \int_{A_{\Lambda,2k}} e^{-H(\gamma)} \poislambdaz(\drm \gamma) =  \\
		&= e^{k^2} \cdot \poislambdaz (A_{\Lambda,2k}) = e^{k^2} \cdot e^{-\Delta} \cdot e^{-\Gamma_u} \cdot \frac{(\Gamma_u)^{k}}{k!} \cdot e^{-\Gamma_v} \cdot \frac{(\Gamma_v)^{k}}{k!}.
	\end{align*}
	We have used (\ref{plstAN}) and (\ref{vz8}). Thanks to Stirling's formula the right side converges to~$\infty$ with $k \to \infty$ and therefore $Z_\Lambda = \infty$.
	Now take any $\tilde{\Lambda} \in \boreldom$. Since $H$ is assumed to be translation invariant, we can, without loss of generality, assume that there exists a~constant $1 \geq t > 0$ such that $t\Lambda \subset \tilde{\Lambda}$. Returning to the proof of Lemma \ref{pomlemma:volbaLambda}, we could have used the approach from Step 2) for $t\Lambda$ and everything would have worked in the same way, so we can assume, without loss of generality, that $\Lambda \subset \tilde{\Lambda}$.

	Now denote $\tilde{D} = \tilde{\Lambda} \times \markspace \setminus (G_u \cup G_v)$ and $\tilde{\Delta} = (z\lambda_{\tilde{\Lambda}}\otimes\markdist)(\tilde{D})$. Then we can write
	\begin{align*}
		Z_{\tilde{\Lambda}}  &= \int_{\Mset_{\tilde{\Lambda}}} e^{-H(\gamma)} \pi_{\tilde{\Lambda}}^z(\drm \gamma) \geq  \int_{A_\Lambda,2k} e^{-H(\gamma)} \pi_{\tilde{\Lambda}}^z(\drm \gamma) =  \\
		&= e^{k^2} \cdot \pi_{\tilde{\Lambda}}^z (A_{\Lambda,2k}) = e^{k^2} \cdot e^{-\tilde{\Delta}} \cdot  e^{-\Gamma_u} \cdot \frac{(\Gamma_u)^{k}}{k!} \cdot e^{-\Gamma_v} \cdot \frac{(\Gamma_v)^{k}}{k!},
	\end{align*}
	and we can again use Stirling's formula to get that $Z_{\tilde{\Lambda}} = \infty$.
\end{proof}

\section{Gibbs--Laguerre tessellations}\label{subsec:Tessellations}
In this section, we consider Gibbs--Laguerre processes, which present a model for random tessellations of $\rtwo$. Since it is not possible to use the existence theorem from \cite{ar:RoellyZass} to prove that an infinite-volume Gibbs--Laguerre process exists, we considered a particular energy function, and, using some parts of the proof from \cite{ar:RoellyZass}, we were able to derive a~new existence theorem under the assumption that we almost surely see a point.

\subsection{Tessellations and Laguerre geometry}

We say that a set $T = \{C_i: i \in \N\}$, where $C_i \subset \rtwo$, is a \textit{tessellation of $\rtwo$}, if
	\begin{itemize}
		\item[i)] $\mathrm{int}(C_i) \cap \mathrm{int}(C_j) = \emptyset$ for $i \neq j$,
		\item[ii)] $\bigcup_i C_i = \rtwo$ (it is space filling),
		\item[iii)] $ \abs{\{C_i \in T: C_i \cap B \neq \emptyset\}} < \infty $ for all $B \subset \rtwo$ bounded ($T$ is locally finite),
		\item[iv)] the sets $C_i$ (called \textit{cells}) are convex compact sets with interior points.
	\end{itemize}

The cells of a tessellation are convex polytopes (see Lemma 10.1.1 in \cite{bo:StochasticAndIntergralGeometry}).
We define an \textit{edge of cell $C$} as a 1-dimensional  intersection of $C$ with its supporting hyperplanes, and we define a \textit{vertex of cell $C$} as a 0-dimensional intersection of $C$ with its supporting hyperplanes\footnote{See \cite{bo:ConvexBodies}, Section 2.4., for the theoretical background.}. We denote the set of all edges of $C$ by $\Delta_1(C)$ and the set of all vertices of $C$ by $\Delta_0(C)$.
We also define \textit{the set of edges of a tessellation T} as $S_1(T) = \left \lbrace F(y): \mathrm{dim}(F(y)) = 1,\, y \in \rtwo\right \rbrace,$ where  $F(y)$ is the intersection of all cells of~$T$ containing the point $y$, $F(y) = \bigcap_{C \in T:\, y \in C} C.$ Analogously, we could define  $S_0(T)$, the \textit{set of vertices of a tessellation~T}. It always holds that $\bigcup_{C \in T} \Delta_0(C) = S_0(T)$, but it can happen that $\bigcup_{C \in T} \Delta_1(C) \neq S_1(T)$. However, we will not consider such tessellations.

A tessellation $T$ is called \textit{normal} if it satisfies that $\bigcup_{C \in T} \Delta_1(C) = S_1(T)$, every edge is contained in the boundary of exactly two cells and every vertex is contained in the boundary of exactly three cells.

We will now focus on Laguerre diagrams (see \cite{ar:LautensackZuyev} for general theory), which are based on the power distance from some fixed set of weighted points. For $x,z \in \rtwo$ and $u \geq 0$ define \textit{the power distance} of $z$ and weighted point $(x,u)$ as $\rho(z,(x,u)) = \abs{x-z}^2 - u^2$.
Denote for points $x,y \in \rtwo$ and weights $u,v \geq 0$
\begin{align*}
	HP\left((x,u),(y,v)\right) &= \left \lbrace z \in \rtwo: \rho(z,(x,u)) = \rho(z,(y,v))\right \rbrace  \\
	& = \left \lbrace z \in \rtwo: 2\dotproduct{y-x}{z} = \abs{y}^2 - \abs{x}^2+u^2 - v^2\right \rbrace
\end{align*}
the line (called \textit{radical axis}) separating $\rtwo$ into two half-planes
and
\begin{equation}
	\begin{aligned} \label{def:HalfPlaneP}
		P\left((x,u),(y,v)\right)&=\left \lbrace z \in \rtwo: \rho(z,(x,u)) \leq \rho(z,(y,v))\right \rbrace \\
		&= \left \lbrace z \in \rtwo: 2\dotproduct{y-x}{z} \leq \abs{y}^2 - \abs{x}^2+u^2 - v^2\right \rbrace
	\end{aligned}
\end{equation} the closed half-plane, whose points are closer to $(x,u)$ than to $(y,v)$ \wrt\ to the power distance.

Now take at most countable subset $\gamma \subset \rtwo \times (0,\infty)$ of weighted points, which will be called \textit{the set of generators}. We will use the notation $x = (x',x'')$ for $x \in \gamma $, where $x'$ denotes the location and $x''$ the weight of the point.	Assume that $\gamma$ satisfies assumption (R0): $ \forall z \in \rtwo \text{ there exists } \underset{x \in \gamma}{\text{min}} \, \rho(z,x).$

We define \textit{the Laguerre diagram} of $\gamma$ as $$L(\gamma) = \{L(x,\gamma): x \in \gamma,\, L(x,\gamma) \neq \emptyset\},$$ where $L(x,\gamma)$ is \textit{the Laguerre cell of $x$ in $\gamma$} defined as $$L(x,\gamma) = \{z \in \rtwo: \rho(z,x) \leq \rho(z,y) \, \, \forall y \in \gamma\}.$$
	We call $x'$ the \textit{nucleus} of the cell $L(x,\gamma)$ and denote by $\gamma' = \{x': (x',x'') \in \gamma\}$ the set of nuclei of $\gamma$. The set of points from $\gamma$, whose cells are empty, is denoted by $E(\gamma) = \{x \in \gamma : L(x,\gamma) = \emptyset\}$.
Clearly from the definition, the (possibly empty) Laguerre cell can be written as
\begin{equation} \label{def:LagCellAsIntersection}
	L(x,\gamma) = \bigcap_{y \in \gamma} P(x,y).
\end{equation}

The following conditions were derived in \cite{ar:LautensackZuyev} for $L(\gamma)$ to be a tessellation. We say that $\gamma \subset \rtwo \times (0,\infty)$ fulfils \textit{regularity conditions} if it satisfies
	\begin{itemize}
		\item[(R1)] for all $(z,t) \in \rtwo \times \R$ only finitely many  $x \in \gamma$ satisfy $\abs{z-x'}^2 - (x'')^2 \leq t, $
		\item[(R2)]$\text{conv}\{x': (x',x'') \in \gamma\} = \rtwo$.
	\end{itemize}
	
We say that  $\gamma \subset \rtwo \times (0,\infty)$ is \textit{in general position} if the following conditions hold
	\begin{itemize}
		\item[(GP1)] no 3 nuclei are contained in a 1-dimensional affine subspace of $\rtwo$,
		\item[(GP2)] no 4 points have equal power distance to some point in $\rtwo$.
	\end{itemize}

The following can be shown (see \cite{ar:LautensackZuyev}, Theorem 2.2.8.). Let $\gamma$ satisfy (R1) and (R2). Then every cell $L(x,\gamma)$, $x \in \gamma$, is compact, $L(\gamma)$ is locally finite and space filling and $\tilde{L}(\gamma) = \{L(x,\gamma) \in L(\gamma): \mathrm{int}(L(x,\gamma)) \neq \emptyset\}$ is a face-to-face tessellation. If $\gamma$ satisfies (R1),(R2), (GP1) and (GP2), then all cells of $L(\gamma)$ have dimension 2 and the Laguerre diagram $L(\gamma)$ is a normal tessellation.

A finite set of generators will not satisfy condition (R2), but this case can be easily treated separately. Assume that $\gamma \subset \rtwo \times (0,\infty)$ is finite, $\gamma = \{x_1, \dots, x_N\}$ for some $N \in~\N$. Then assumption (R0) holds and therefore the Laguerre cells $L(x,\gamma)$ are well defined.  We see from (\ref{def:LagCellAsIntersection}) that each cell is an intersection of finitely many closed hyperplanes, i.\,e., bounded $L(x,\gamma)$ are convex polytopes. $L(\gamma)$ is space filling and for two points $x_i, x_j \in \gamma$ such that their cells have non-empty interiors, we get that $\mathrm{int}\left(L(x_i, \gamma)\right) \cap \text{int}\left(L(x_j, \gamma)\right) = \emptyset$.
  Let $\Delta_i(x,\gamma)$ denote the set of $i$-dimensional intersections of the cell $L(x,\gamma)$ with the hyperplanes $HP(x,y),\, y \in \gamma$, and define $\Delta_i(\gamma) = \bigcup_{x \in \gamma}\Delta_i(x,\gamma)$, $i=0,1$. The following can easily be proved.

\begin{proposition}
	The diagram $L(\gamma)$ is well defined for a finite set of generators $\gamma$. Assume that $\gamma$ satisfies (GP1) and (GP2). Then it holds that the cell $L(x, \gamma)$, $x \in \gamma$, is either empty or has dimension 2, each vertex $v\in \Delta_0(\gamma)$ lies in the boundary of exactly three cells and each edge $e \in \Delta_1(\gamma)$ lies in the boundary of exactly two cells.
\end{proposition}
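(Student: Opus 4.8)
The plan is to reduce every assertion to the power-distance characterization of membership: a point $z \in \rtwo$ lies in the cell $L(y,\gamma)$ precisely when $y$ minimizes $w \mapsto \rho(z,w)$ over $\gamma$. Hence the number of cells containing $z$ equals the number $m(z)$ of generators attaining $\min_{w \in \gamma}\rho(z,w)$. Assumption (GP2) says that no four generators share a common power distance to any point, so $m(z) \le 3$ for every $z$; this single bound drives both the vertex count and one half of the dimension argument. The well-definedness of $L(\gamma)$ for finite $\gamma$ is immediate from the preceding discussion: (R0) holds because the minimum is taken over a finite set, so each $L(x,\gamma) = \bigcap_{y \in \gamma} P(x,y)$ is a well-defined intersection of finitely many closed half-planes.

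For the dimension claim I would fix $x$ with $L(x,\gamma) \neq \emptyset$ and suppose $\dim L(x,\gamma) \le 1$, splitting into two cases. If the cell is a single point $z_0$, then, being a bounded intersection of closed half-planes equal to one point, the outer normals of its active constraints must positively span $\rtwo$; in $\rtwo$ this requires at least three active constraints, i.e.\ at least three generators $y_1,y_2,y_3 \neq x$ with $\rho(z_0,y_i) = \rho(z_0,x)$. Together with $x$ these are four generators with equal power distance to $z_0$, contradicting (GP2). If the cell is one-dimensional, contained in a line $\ell$ with direction $e$ and unit normal $\nu$, I would pick $z_0$ in its relative interior; every constraint active there remains tight along $\ell$, so its normal $y'-x'$ (recall each radical axis $HP(x,y)$ has normal $y'-x'$) is orthogonal to $e$, hence parallel to $\nu$. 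To keep the feasible set pinned to $\ell$ rather than opening into a half-plane, both transverse directions $\pm\nu$ must be blocked, which forces two active generators $y_1,y_2$ whose normals $y_1'-x'$ and $y_2'-x'$ are nonzero antiparallel multiples of $\nu$. Then $x', y_1', y_2'$ are three distinct collinear nuclei, contradicting (GP1). Thus every nonempty cell is two-dimensional.

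For the combinatorial incidences I would argue through $m(z)$ again. A vertex $v \in \Delta_0(\gamma)$ is a $0$-dimensional face of some cell $L(x,\gamma)$, so it is the meeting point of two edges of that cell and therefore lies on two radical axes $HP(x,y_1)$, $HP(x,y_2)$; this gives $\rho(v,x)=\rho(v,y_1)=\rho(v,y_2)$, so $m(v)\ge 3$, and with (GP2) giving $m(v)\le 3$ we conclude $m(v)=3$, i.e.\ exactly three cells meet at $v$. For an edge $e \in \Delta_1(\gamma)$ I would take $z$ in its relative interior: $m(z) \ge 2$ because $z$ lies on a radical axis bounding two cells, while $m(z)\le 2$ because $m(z)=3$ would make $z$ the common point of three radical axes $HP(x,y_1), HP(x,y_2), HP(y_1,y_2)$ which, by (GP1), are pairwise non-parallel and meet only at the radical center, so $z$ would be a vertex rather than a relative interior edge point. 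Since $m\equiv 2$ on the (connected) relative interior, the pair of minimizing generators is constant there and $e$ borders exactly two cells.

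The main obstacle is the one-dimensional case of the dimension argument: translating ``the cell has empty interior'' into a concrete statement about active-constraint normals and then into collinearity of nuclei requires the polyhedral bookkeeping above (that all active normals are parallel to $\nu$ and that both transverse directions must be blocked). The remaining steps follow almost immediately from the identity ``cells through $z$ $\leftrightarrow$ minimizers of $\rho(z,\cdot)$'' combined with the two general-position hypotheses; I would only need to additionally record that under (GP1)--(GP2) three concurrent radical axes determined by three equidistant generators meet in a single point, which is exactly what separates a vertex from an interior edge point.
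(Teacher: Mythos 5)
The paper offers no proof of this proposition at all---it is introduced with ``The following can easily be proved''---so there is nothing to compare against except the standard argument it is alluding to, and your proposal supplies exactly that argument, correctly. Reducing every incidence count to the number $m(z)$ of minimizers of $\rho(z,\cdot)$, getting $m(z)\le 3$ from (GP2), and converting degenerate cells into either four equidistant generators (via positive spanning of the active normals) or three collinear nuclei (via antiparallel active normals) is the right mechanism, and matches how the analogous statement for infinite configurations is handled in Lautensack's thesis. The one spot I would tighten is the edge claim: the assertion that a relative-interior point $z$ of an edge $e\subset HP(x,y_1)$ with $m(z)=3$ ``would be a vertex'' is not quite the contradiction you want. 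The cleaner finish is that, since (GP1) makes $HP(x,y_2)$ transversal to $HP(x,y_1)$, the affine function $w\mapsto\rho(w,y_2)-\rho(w,x)$ changes sign at $z$ along $e$, so the points of $e$ on one side of $z$ violate the constraint $P(x,y_2)$ and hence cannot lie in $L(x,\gamma)$---contradicting that $z$ is a relative-interior point of $e$. With that substitution the proof is complete.
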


For finite $\gamma$ in general position we say that $L(\gamma)$ is a \textit{generalized normal tessellation}.

\subsection{Gibbs--Laguerre measures}\label{Sec:Gibbs-LaguerreMeasures}

To model a random Laguerre diagram $L(\Psi)$, we consider a Laguerre diagram with random set of generators $\Psi$, where $\Psi$ is a marked point process in the space $\rtwo \times (0,\infty)$. Our aim was to consider $\Psi$ to be an infinite-volume marked Gibbs point process with energy function depending on the geometric properties of $L(\Psi)$ and to use Theorem \ref{th:Existencetheorem} to show that there exists an infinite-volume Gibbs--Laguerre measure with unbounded weights. Unfortunately, the range assumption $\hr$ turned out to be an insurmountable obstacle, and our approach needed to be adjusted.

\subsubsection{The energy function and finite-volume Gibbs measures}
Let the state space be $\rtwo \times \mathbb{R}$ with mark space $(\mathbb{R},\norm{\cdot})$ and take a mark distribution $\markdist$  such that $\markdist((0,\infty)) = 1$ and such that $\hmoment$ holds.
We will work with energy function
\begin{equation} \label{def:EFVertices}
	H(\gamma) =  \begin{cases}
		\sum_{x \in \gamma} \abs{\Delta_0(x,\gamma)} & \qquad \text{if } E(\gamma) = \emptyset, \\
		+ \infty & \qquad \text{if } E(\gamma) \neq \emptyset,
	\end{cases}  \qquad \gamma \in \Msetf.
\end{equation}
We sum the number of vertices for each Laguerre cell and we forbid the configurations for which there exists an empty cell.
Clearly, $H$ is non-negative. Therefore, the stability assumption $\hs$ is satisfied and the finite-volume Gibbs measure $\dist{\Lambda}(\drm \gamma)$ (see (\ref{def:FVGM})) in $\Lambda \in \boreldvaom$ with energy function (\ref{def:EFVertices}) and activity $z > 0$ is well defined.

It holds (see Proposition 3.1.5 in \cite{ar:LautensackZuyev}, or \cite{ar:HZGeneralPosition}) that \begin{equation}\label{vz:PoissonGeneralPosition}
	\poislambdaz\left(\{\gamma \in \Mset: \gamma \text{ is in general position}\}\right)=1,\, \forall \Lambda \in \boreldvaom,\, \forall z > 0.
\end{equation}
Therefore, also $\dist{\Lambda}(\{\gamma \in \Mset: \gamma \text{ is in general position}\})=1,$ particularly the Laguerre diagram $L(\gamma)$ is a generalized normal tessellation for $\dist{\Lambda}$-{a.a.}$ \, \gamma$. Since configurations with empty cells are forbidden, we also get that \begin{equation} \label{vz:GibbsNonemptySets}
	\dist{\Lambda}(\{\gamma: E(\gamma) = \emptyset\}) = 1.
\end{equation}

In the following proposition, we present the key observation for the energy function $H$ from (\ref{def:EFVertices}). This observation will later allow us to show that the conditional energy $H_\Lambda$ is attained as soon as all cells belonging to the points in $\Lambda$ are bounded.

\begin{proposition} \label{lemma:KeyPropForEnergyFunctionVertices}
	Let $H$ be the energy function defined in (\ref{def:EFVertices}) and take $\gamma \in \Msetf$ such that it satisfies (GP1), (GP2) and $E(\gamma) = \emptyset$. Assume that the Laguerre cell $L(x, \gamma)$ of a point $x \in \gamma$ is bounded. Then we have that
	$H(\gamma) - H(\gamma \setminus \{x\}) = 6.$
\end{proposition}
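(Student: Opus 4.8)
The plan is to show that removing a point $x$ whose cell $L(x,\gamma)$ is bounded changes the total vertex count by exactly $6$. Since $\gamma$ is in general position and $E(\gamma)=\emptyset$, the diagram $L(\gamma)$ is a generalized normal tessellation: every vertex lies in exactly three cells and every edge in exactly two. The cell $L(x,\gamma)$ is a bounded convex polygon, so by the planar Euler relation for a normal tessellation its number of vertices equals its number of edges; call this number $p$. The key combinatorial fact I would isolate first is that for a normal planar tessellation every bounded cell satisfies $\abs{\Delta_0(x,\gamma)} = \abs{\Delta_1(x,\gamma)}$, i.e.\ it is a $p$-gon with $p$ vertices and $p$ edges.

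First I would set $\gamma' = \gamma\setminus\{x\}$ and invoke Lemma \ref{lemma:zachovanipodminek} (or the finite analogue) to guarantee that $\gamma'$ still has $E(\gamma')=\emptyset$ and remains in general position, so that $H(\gamma')$ is again a genuine vertex sum over a generalized normal tessellation. Then I would compute the difference $H(\gamma)-H(\gamma')$ by tracking exactly which vertices and cell-incidences are created or destroyed when $x$ is deleted. When $x$ is removed, its cell $L(x,\gamma)$ disappears and the region it occupied is redistributed among its $p$ neighbours (the points $y_1^x,\dots,y_p^x$ of Lemma \ref{lemma:neprazdnebunkyjsoukonecnepruniky}), whose cells expand to fill the gap. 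The combinatorial heart is to count the net change in the multiset of (cell, vertex) incidences summed in $H$.

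The cleanest way I would organize the count is via the incidence sum $H(\gamma)=\sum_{v\in S_0}\#\{\text{cells containing }v\} = 3\,\abs{S_0(\gamma)}$, using that each vertex sits in exactly three cells. Thus $H(\gamma)-H(\gamma') = 3\bigl(\abs{S_0(\gamma)}-\abs{S_0(\gamma')}\bigr)$, and it suffices to show that deleting $x$ decreases the number of tessellation vertices by exactly $2$. Here I would argue locally: the $p$ vertices of $L(x,\gamma)$ are destroyed, while reorganizing the neighbourhood of the hole creates new vertices; in a normal planar tessellation the local patch around a removed $p$-gonal cell has a determined Euler characteristic, and counting faces, edges and vertices before and after (each interior vertex three-valent) forces the net vertex change to be $p-(p-2)=2$ only after accounting for the new edges among the neighbours. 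Concretely, the original patch (the $p$-gon plus its incident structure) and the patch after deletion are both disk-like, so the Euler relation $V-E+F=1$ applied to each and subtracted yields the net change; three-valence ($3V=2E_{\mathrm{int}}+\cdots$) pins it down.

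The main obstacle I anticipate is the bookkeeping of the local repair: showing that after deleting $x$ the neighbours' cells meet in a configuration that is still normal (three cells per new vertex) and that exactly the right number of new edges and vertices appear. Degenerate coincidences are excluded by (GP1)--(GP2), which is precisely why general position is assumed, but one must verify that general position of $\gamma$ passes to $\gamma'$ and that no new four-fold power-distance coincidence is created by the deletion — which is immediate since deleting a generator cannot manufacture a new equidistance among the remaining points. Once normality of $\gamma'$ is secured, the constant $6 = 3\cdot 2$ follows from the two independent facts that each vertex has multiplicity three and that removing a bounded cell lowers the global vertex count by two; I would present the Euler-characteristic computation for the local patch as the decisive step and flag that the boundedness of $L(x,\gamma)$ is exactly what makes the affected region a topological disk, so the planar Euler relation applies without boundary corrections.
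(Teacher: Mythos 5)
Your proposal is correct and takes essentially the same route as the paper: the paper's per-cell bookkeeping (difference $=3k-\sum_i v_i$ with $\sum_i v_i = 3\,|V_2|$ by three-valence) is just your identity $H(\gamma)=3\,|S_0(\gamma)|$ unwound, and the paper's observation that the new edges inside $L(x,\gamma)$ form a tree is precisely your Euler relation $V-E+F=1$ for the subdivided disk, both yielding $|V_2|=k-2$ and hence a net loss of two vertices. There is no substantive difference in the argument.
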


\begin{figure}[h]
	\centering
	\includegraphics[width = 0.8\linewidth]{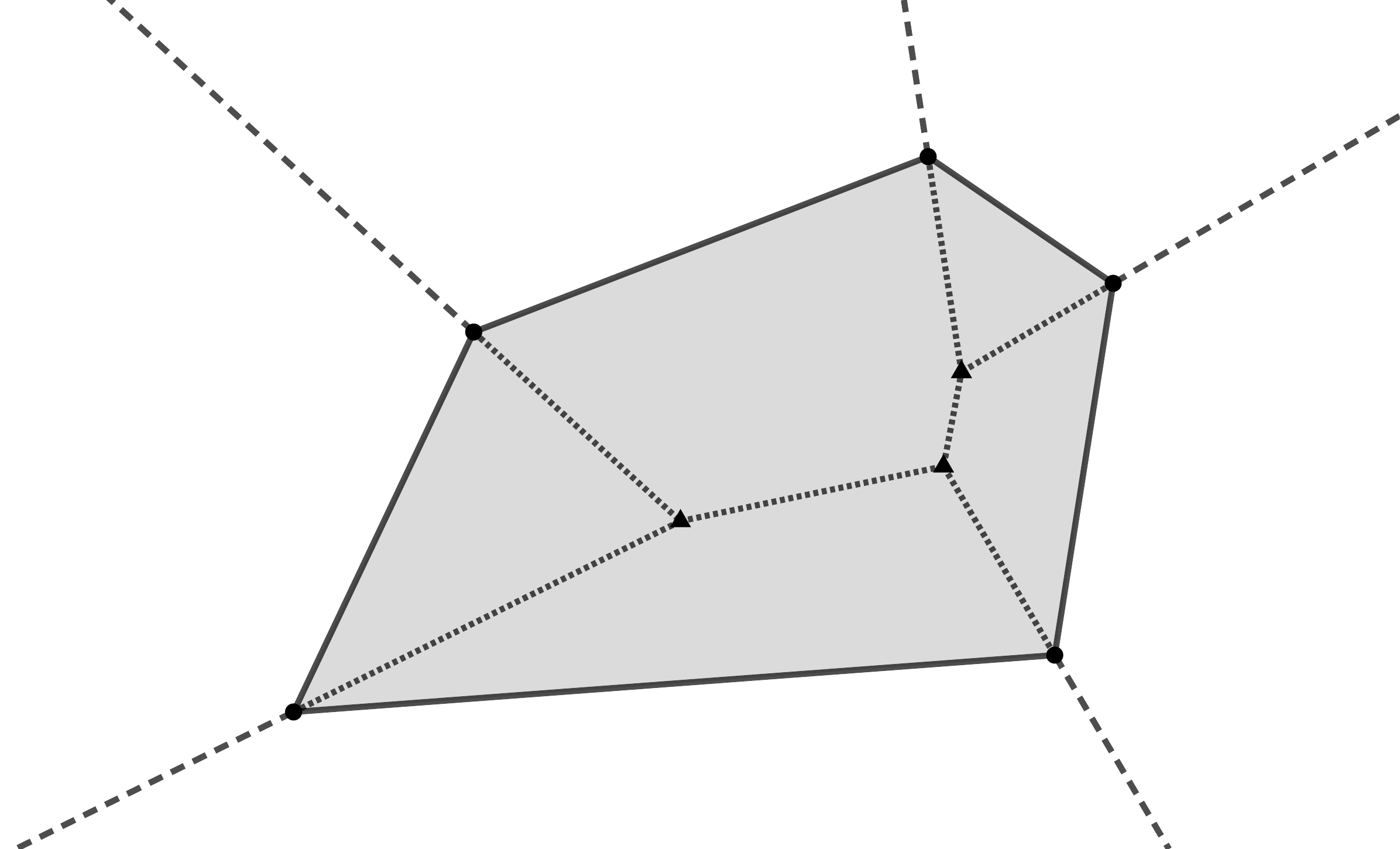}	
	\caption{Comparison of a Laguerre diagram with and without a point $x$. The Laguerre cell $L(x,\gamma)$ is the grey pentagon, full lines are its edges, the edges of the neighbouring cells in $L(\gamma)$ are the dashed lines, circular points are the vertices in $L(\gamma)$. Triangular points are the additional vertices in $L(\gamma \setminus \{x\})$ and dotted lines are the additional edges of the cells $L(y_i^x, \gamma \setminus \{x\})$ arising from the removal of the point~$x$.
	}\label{fig:DukazRozdilPoctuVrcholu}
\end{figure}

\begin{proof}
	Let $\gamma$ and $x$ be as assumed. Then $L(\gamma)$ (and also $L(\gamma \setminus {x})$ thanks to Lemma~\ref{lemma:zachovanipodminek}) is a generalized normal tessellation, and we know that $L(x, \gamma) = \bigcap_{i =1}^{k} P(x,y_i^x)$ for $y_i^x \in \gamma$ such that $L(x, \gamma) \cap L(y_i^x, \gamma) \neq \emptyset$, $k \in \N$. In particular, since $L(x,\gamma)$ is bounded, we have $\abs{\Delta_0(x,\gamma)} = k$.  The Laguerre cells of points $y \in \gamma \setminus \{y_1^x, \dots,y_{k}^x\}$ do not change by removing the point $x$, and therefore we can write
	\begin{equation}\label{vz:pom4}
	\begin{aligned}
		H(\gamma) - H(\gamma \setminus \{x\}) &= \abs{\Delta_0(x,\gamma)} + \sum_{i=1}^{k} \abs{\Delta_0(y_i^x,\gamma)} - \abs{\Delta_0(y_i^x, \gamma \setminus \{x\})} \\
		&= k + \sum_{i=1}^{k} \abs{\Delta_0(y_i^x,\gamma)} - \abs{\Delta_0(y_i^x, \gamma \setminus \{x\})}.
	\end{aligned}
\end{equation}
	By removing the point $x$, the neighbours of $x$ partition the cell $L(x,\gamma)$ into $k$ non-empty bounded convex polytopes $K_1, \dots, K_k$ such that $L(y_i^x, \gamma\setminus \{x\}) = K_i \cup L(y_i^x, \gamma)$.  Denote by $v_i$ the number of new vertices attained by the nucleus $y_i^x$ and realize that each neighbour $y_i^x$ shares two vertices with the nucleus $x$. Therefore
	\begin{equation}\label{vz:pom3}
		\begin{aligned}
			H(\gamma) - H(\gamma \setminus \{x\}) \overset{(\ref{vz:pom4})}{=}
			 k + \sum_{i=1}^{k} 2 - v_i = 3k - \sum_{i=1}^{k}v_i.
		\end{aligned}
	\end{equation}
	The partition of the cell $L(x,\gamma)$ by its neighbours defines a graph structure (see Figure~\ref{fig:DukazRozdilPoctuVrcholu}) with vertices $V = \Delta_0(x,\gamma) \cup V_2$, where $V_2$ is the set of new vertices, which appear after the removal of the point $x$, $V_2 = \Delta_0(\gamma \setminus \{x\}) \setminus \Delta_0(\gamma)$. The set of edges is defined as $E = \Delta_1(x,\gamma) \cup E_2$, where $E_2$ is the set of new edges (intersected with $L(x,\gamma)$), which appear after the removal of the point $x$. Since both $L(\gamma)$ and $L(\gamma\setminus \{x\})$ are normal, all vertices have degree 3. Thus, we have that
	\begin{equation} \label{vz:pom}
		3 \cdot \abs{V} = 2 \cdot \abs{E} \quad \implies \quad 3(k + \abs{V_2}) = 2(k + \abs{E_2}).
	\end{equation}
	Since we assume that there are no empty cells, the graph $(V,E_2)$ is a connected graph without cycles (i.\,e., a tree), and we know that
	\begin{equation} \label{vz:pom2}
		\abs{V} = \abs{E_2} + 1 \quad \implies \quad k + \abs{V_2} = \abs{E_2} + 1.
	\end{equation}
	Putting together (\ref{vz:pom}) and (\ref{vz:pom2}), we get that $\abs{V_2} = k- 2$. From normality we also get that $\sum_{i =1}^{k}v_i=3 \cdot \abs{V_2}$ and that, together with (\ref{vz:pom3}), completes the proof.
\end{proof}

\subsubsection{The existence of the limit measure and its support}

In what follows, we present the definitions and results directly taken from Section~3 in~\cite{ar:RoellyZass}. All of these results hold under the two assumptions $\hs$ and $\hmoment$ and the proofs can be found in \cite{ar:RoellyZass}.
Denote by $\dist{n} = \dist{\Lambda_n}$ the finite-volume Gibbs measure on $\Lambda_n = \left[-n,n\right)^2$, $n \in \N$. For $n \in \N$ and $\kappa \in \ztwo$ set $\Lambda^{\kappa}_n = \Lambda_n + 2n\kappa$. Then $\{\Lambda^{\kappa}_n\}_{\kappa \in \ztwo}$ is a disjoint partition of the space $\rtwo$.

For all $n \in \N$ let $\tildepn$ be the probability measure on $\Mset$ under which the configurations in disjoint sets $\Lambda^{\kappa}_n$ are independent and identically distributed according to the finite-volume Gibbs measure $\dist{n}$. For $\kappa \in \ztwo$ denote the shift operator on $\rtwo$ by $\vartheta_\kappa(x) = x + \kappa.$
Let $n \in \N$, $\Lambda \in \boreldvaom$ and define \textit{the empirical field $\barpn$ associated to the probability measure $\tildepn$} and \textit{the estimating sequence} $\hatpn$:
	\begin{equation} \label{df:stacionarniposl}
		\barpn = \frac{1}{\abs{\Lambda_n}} \sum_{\kappa \in \Lambda_n \cap \ztwo} \tildepn \circ \vartheta_\kappa^{-1}  \quad \text{ and } \quad \hatpn = \frac{1}{\abs{\Lambda_n}} \sum_{\kappa \in \ztwo \cap \Lambda_n: \Lambda \subset \vartheta_\kappa(\Lambda_n)} \dist{n} \circ \vartheta_\kappa^{-1}.
	\end{equation}

Denote by $\ProbMset$ the space of probability measures on $\Mset$. Function F on $\Mset$ is called  \textit{tame} if there exists $a>0$ such that $\left| F(\gamma) \right| \leq a\left(1 + \left<\gamma,1 + \norm{m}^ {2+\delta}\right>\right)$.
	Denote by $\mathcal{L}$ the set of all tame local functions $F:\Mset \goto \R$. We define \textit{the topology $\tau_\mathcal{L}$ of local convergence} on $\ProbMset$ as the smallest topology such that the mapping $\mathsf{P} \rightarrow \int F \drm \mathsf{P}$ is continuous for all $ F \in \mathcal{L}$.

\begin{lemma}[Proposition 1 in \cite{ar:RoellyZass}] \label{lemma:existenceOfBarP}
	Let $(\barpn)_{n\in \N}$ be the stationarised sequence defined in  (\ref{df:stacionarniposl}). Then there exists a subsequence $(\barp_{n_k})_{k\in \N}$ such that $\barp_{n_k} \overset{\tau_\mathcal{L}}{\goto} \barp$, where $\barp$ is a probability measure on $\Mset$ invariant under translations by $\kappa \in \ztwo$.
\end{lemma}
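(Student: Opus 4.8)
The plan is to exhibit $(\barpn)_{n \in \N}$ as a sequence of $\ztwo$-stationary probability measures lying in a single sublevel set of the specific entropy relative to the stationary Poisson process $\pi^z$, and then to invoke the sequential compactness of such sublevel sets in the topology $\tau_\mathcal{L}$ established in \cite{ar:GeorgiiZessin}. The translation invariance of the limiting measure then comes for free from the invariance of the approximating fields.

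First I would record that each $\barpn$ is already \emph{exactly} $\ztwo$-stationary. By construction $\tildepn$ is periodic with respect to the lattice $2n\ztwo$ (its restrictions to the boxes $\Lambda_n^\kappa = \Lambda_n + 2n\kappa$ are mutually independent copies of $\dist{n}$ up to translation), and $\barpn$ averages $\tildepn \circ \vartheta_\kappa^{-1}$ over $\Lambda_n \cap \ztwo$, a full set of coset representatives of $2n\ztwo$ in $\ztwo$. Shifting $\barpn$ by any $e \in \ztwo$ merely permutes the summands modulo the period, so $\barpn \circ \vartheta_e^{-1} = \barpn$.

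The heart of the argument is a uniform bound on the specific entropy. Writing $\dist{n}$ in its density form $Z_{\Lambda_n}^{-1} e^{-H(\gamma)} \poislambdanz(\drm\gamma)$, the relative entropy of $\dist{n}$ with respect to $\poislambdanz$ equals $-\int H \,\drm\dist{n} - \log Z_{\Lambda_n}$. The stability assumption $\hs$ bounds $-\int H \,\drm\dist{n}$ above by $c \int \IntProGammaf{\gamma}{1+\norm{m}^{d+\delta}}\,\dist{n}(\drm\gamma)$; the lower bound $Z_{\Lambda_n} \geq e^{-z\abs{\Lambda_n}}$, obtained by testing the partition function against the empty configuration, controls $-\log Z_{\Lambda_n}$; and the explicit product form of $\poislambdanz$ together with the moment assumption $\hmoment$ bounds the weighted point count coming from $\hs$. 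The resulting estimate is linear in $\abs{\Lambda_n}$, so after dividing by $\abs{\Lambda_n}$ the specific entropy $\mathcal{I}(\barpn)$ is bounded uniformly in $n$, and the same moment control ensures the tightness needed to place the whole sequence in one compact sublevel set $\{\mathcal{I}(\cdot) \leq C\}$.

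Finally I would apply the Georgii--Zessin compactness theorem to extract a subsequence $\barp_{n_k} \overset{\tau_\mathcal{L}}{\to} \barp$ with $\barp \in \ProbMset$. Stationarity passes to the limit because, for every tame local $F$ and every $\kappa \in \ztwo$, the function $F \circ \vartheta_\kappa$ is again tame and local, whence $\int F \circ \vartheta_\kappa \,\drm\barp = \lim_k \int F \circ \vartheta_\kappa \,\drm\barp_{n_k} = \lim_k \int F \,\drm\barp_{n_k} = \int F \,\drm\barp$. I expect the main obstacle to be the uniform entropy bound in the presence of unbounded marks: one must verify that the exponential moment $\exp(\norm{m}^{d+2\delta})$ furnished by $\hmoment$ genuinely dominates the polynomial weight $1+\norm{m}^{d+\delta}$ arising from $\hs$, both in the entropy estimate and in the tightness condition demanded by \cite{ar:GeorgiiZessin}.
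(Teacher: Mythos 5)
Your proposal is correct and follows essentially the same route as the source: the paper gives no proof of this lemma, quoting it verbatim as Proposition 1 of \cite{ar:RoellyZass}, and the argument there is precisely yours --- exact $\ztwo$-stationarity of $\barpn$ from averaging over a full set of coset representatives of $2n\ztwo$, a specific-entropy bound uniform in $n$ obtained from $\hs$, the lower bound $Z_{\Lambda_n}\ge e^{-z\abs{\Lambda_n}}$ and the exponential moment $\hmoment$, and then sequential compactness of the entropy sublevel sets in $\tau_{\mathcal{L}}$ from \cite{ar:GeorgiiZessin}, with stationarity passing to the limit via tame local test functions. The one point you flag as a potential obstacle, namely that $\exp(\norm{m}^{d+2\delta})$ must dominate $\exp\bigl(c(1+\norm{m}^{d+\delta})\bigr)$ in the Poisson exponential-moment computation, does go through since $d+2\delta>d+\delta$.
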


In the following text, we w.l.o.g.\ assume that $(\barpn)_{n\in \N}\overset{\tau_\mathcal{L}}{\goto} \barp$.
\begin{lemma}[Propositions 2 and 3 in \cite{ar:RoellyZass}]\label{lemma:TempAreSuppor}
	The measures $\barpn$ and $\barp$ defined in  (\ref{df:stacionarniposl}) satisfy that
	$\barpn(\Msettemp) = 1$, $n \in \N$,  and $\barp(\Msettemp) = 1.$ Furthermore, for all $ \varepsilon > 0$ there exists $\l \in \N$ such that
	$\barpn(\Msetl{\l}) \geq 1-\varepsilon, \forall n \in \N.$
\end{lemma}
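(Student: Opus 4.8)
The plan is to reduce both assertions to two facts: a uniform (in $n$) bound on the $\markdist$-weighted intensity of the stationarised fields, and the $\ztwo$-stationarity of $\barpn$ and $\barp$. Once these are in hand, the multidimensional ergodic theorem and its maximal inequality do the rest. Throughout write $g(m)=1+\norm{m}^{2+\delta}$. Both $\barpn$ (by the averaging construction, which runs over a full set of coset representatives of $\ztwo/2n\ztwo$) and $\barp$ (by Lemma~\ref{lemma:existenceOfBarP}) are invariant under integer shifts, so the field $Y_\kappa(\gamma)=\left<\gamma_{\kappa+[0,1)^2},g\right>$, $\kappa\in\ztwo$, is stationary under each of them.

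The crux is the uniform intensity bound $\sup_{n}\mathbb{E}_{\barpn}[Y_0]=\sup_n\abs{\Lambda_n}^{-1}\mathbb{E}_{\dist{n}}[\left<\gamma_{\Lambda_n},g\right>]\le\alpha<\infty$, which I would derive from $\hs$ and $\hmoment$ alone via the Gibbs (Donsker--Varadhan) entropy inequality: for $s>0$,
\[
s\,\mathbb{E}_{\dist{n}}[\left<\gamma_{\Lambda_n},g\right>]\le \mathrm{Ent}(\dist{n}\,\|\,\poislambdanz)+\log\mathbb{E}_{\poislambdanz}\big[e^{s\left<\gamma_{\Lambda_n},g\right>}\big].
\]
Here $\mathrm{Ent}(\dist{n}\,\|\,\poislambdanz)=-\mathbb{E}_{\dist{n}}[H]-\log Z_{\Lambda_n}\le c\,\mathbb{E}_{\dist{n}}[\left<\gamma_{\Lambda_n},g\right>]+z\abs{\Lambda_n}$, using stability $H\ge -c\left<\cdot,g\right>$ and $Z_{\Lambda_n}\ge \poislambdanz(\{\zeromeasure\})=e^{-z\abs{\Lambda_n}}$, while the Poisson Laplace transform equals $z\abs{\Lambda_n}\int(e^{sg}-1)\,\markdist(\drm m)$, finite for every $s$ by $\hmoment$. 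Choosing $s>c$ and dividing by $\abs{\Lambda_n}$ produces an $\alpha$ independent of $n$. The bound then passes to $\barp$: the truncations $Y_0\wedge A$ are bounded local, hence tame, functions, so $\mathbb{E}_{\barp}[Y_0\wedge A]=\lim_n\mathbb{E}_{\barpn}[Y_0\wedge A]\le\alpha$, and monotone convergence in $A$ gives $\mathbb{E}_{\barp}[Y_0]\le\alpha$.

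With finite intensity available, temperedness becomes automatic. Since $U(0,l)\subset[-l,l)^2$, the spatial ergodic theorem applied to $(Y_\kappa)$ gives, $\barpn$- and $\barp$-almost surely,
\[
\frac{\left<\gamma_{U(0,l)},g\right>}{l^2}\ \le\ \frac{1}{l^2}\sum_{\kappa\in[-l,l)^2\cap\ztwo}Y_\kappa\ \xrightarrow[l\to\infty]{}\ 4\,\mathbb{E}[Y_0\mid\mathcal{J}]\ <\ \infty,
\]
where $\mathcal{J}$ is the invariant $\sigma$-field; hence $\sup_{l}\left<\gamma_{U(0,l)},g\right>/l^2<\infty$ almost surely, which is precisely $\gamma\in\Msettemp$. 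Thus $\barpn(\Msettemp)=1$ for every $n$ and $\barp(\Msettemp)=1$.

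It remains to make the concentration on $\Msetl{l}$ uniform in $n$. Here I would use the inclusion $\Msett\subset\Msetl{l(t)}$ recorded after the definition of $\Msetl{\cdot}$, so that it suffices to bound $\barpn(\Msett^c)$. Since $\Msett^c=\{\sup_{l}\left<\gamma_{U(0,l)},g\right>/l^2>t\}$ sits inside a level set of the maximal function of $(Y_\kappa)$ over cubes, Wiener's maximal (weak-type $(1,1)$) ergodic inequality yields
\[
\barpn(\Msett^c)\ \le\ \barpn\Big(\sup_l \tfrac{1}{(2l)^2}\!\!\sum_{\kappa\in[-l,l)^2\cap\ztwo}\!\!Y_\kappa>\tfrac{t}{4}\Big)\ \le\ \frac{4C}{t}\,\mathbb{E}_{\barpn}[Y_0]\ \le\ \frac{4C\alpha}{t},
\]
with $C$ a purely dimensional constant. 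Given $\eps>0$, choosing $t$ with $4C\alpha/t\le\eps$ and setting $l=l(t)$ gives $\barpn(\Msetl{l})\ge\barpn(\Msett)\ge1-\eps$ for all $n$ simultaneously. The main obstacle is the uniform intensity bound of the second paragraph: with only $\hs$ and $\hmoment$ the naive exponential-moment estimate loses a factor $e^{z\abs{\Lambda_n}}$, so the entropy inequality (rather than a crude stability tilt) is essential to get an $n$-independent $\alpha$; everything downstream is then soft, and the only other delicate point is transporting the non-local moment functional to $\barp$, which the truncation-plus-monotone-convergence device handles.
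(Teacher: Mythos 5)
Your argument is essentially correct, but note that the paper does not actually prove this lemma: it is imported wholesale as Propositions 2 and 3 of \cite{ar:RoellyZass}, so what you have written is a reconstruction of the cited source rather than a deviation from an argument given here. Your skeleton coincides with that source's: everything hinges on the uniform bound $\sup_n \abs{\Lambda_n}^{-1}\,\mathbb{E}_{\dist{n}}\left[\left< \gamma_{\Lambda_n},1+\norm{m}^{2+\delta}\right>\right]\leq\alpha<\infty$, which you obtain, as they essentially do, from the variational entropy inequality combined with $\hs$, the bound $Z_{\Lambda_n}\geq e^{-z\abs{\Lambda_n}}$, and the Poisson Laplace functional --- this is precisely where $\hmoment$ and the exponent $d+2\delta$ earn their keep. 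Where you genuinely differ is the final, uniform-in-$n$ step: \cite{ar:RoellyZass} control $\barpn((\Msett)^c)$ through tail estimates that lean on the exponential mark moment, whereas you route everything through the Wiener weak-type $(1,1)$ maximal ergodic inequality for the $\ztwo$-stationary field $(Y_\kappa)$, which needs only the first moment of $1+\norm{m}^{2+\delta}$ and delivers uniformity in $n$ via a purely dimensional constant; that is a clean and arguably more economical mechanism. The Birkhoff argument for $\barpn(\Msettemp)=\barp(\Msettemp)=1$ and the truncation device for pushing $\mathbb{E}[Y_0]\leq\alpha$ through the $\tau_\mathcal{L}$-limit are both sound. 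Two points deserve an explicit line in a written version: (i) before rearranging $(s-c)M\leq z\abs{\Lambda_n}(1+K_s)$ you must know a priori that $M=\mathbb{E}_{\dist{n}}\left[\left<\gamma_{\Lambda_n},1+\norm{m}^{2+\delta}\right>\right]$ is finite --- the crude exponential tilt you dismiss as lossy supplies exactly this; and (ii) the identity $\mathbb{E}_{\barpn}[Y_0]=\abs{\Lambda_n}^{-1}\mathbb{E}_{\dist{n}}\left[\left<\gamma_{\Lambda_n},1+\norm{m}^{2+\delta}\right>\right]$ rests on the $2n\ztwo$-periodicity of $\kappa\mapsto\mathbb{E}_{\tildepn}[Y_\kappa]$ together with the fact that $\Lambda_n\cap\ztwo$ is a full set of coset representatives of $\ztwo/2n\ztwo$, since the shifted unit cubes appearing in the average do not literally tile $\Lambda_n$.
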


\subsubsection{The set of admissible configurations}

We would like to show that for the energy function $H$ defined in (\ref{def:EFVertices}) the measure $\barp$ satisfies Definition \ref{def:infinitegibbsmeasure}.
First, we need to prepare some preliminary results. We will show that $\barp$-a.a.\ configurations satisfy that $L(\gamma)$ is a normal tessellation with no empty cells. We know, thanks to Lemmas \ref{lemma:TempAreSuppor} and \ref{lemma:podmr0r1protemp} (see Appendix), that $\barp$-a.a.$\,\gamma$ satisfy (R0) and (R1). Condition (R2) is satisfied, since $\barp$ is stationary under translations by $\kappa \in \ztwo$.

\begin{lemma} \label{lemma:podminkar2}
	If $\Psi$ is a simple marked point process whose distribution is invariant under translation by $\kappa \in \mathbb{Z}^2$ then it almost surely satisfies the assumption (R2) or it is empty, i.\,e., $\mathbb{P}(\text{conv}\{x': (x',x'') \in \Psi\} \in \{\rtwo, \emptyset\}) = 1$.
\end{lemma}

The proof of this lemma is just a slight modification of the proof of Theorem 2.4.4.\,in~\cite{bo:StochasticAndIntergralGeometry}. For assumptions (GP1) and (GP2) and the non-emptiness of the cells, we use the convergence in the $\tau_{\mathcal{L}}$ topology.

\begin{lemma} \label{lemma:podminkygp}
	It holds that $\barp$-a.a.$\,\gamma$ are in general position and satisfy $E(\gamma) = \emptyset$.
	
\end{lemma}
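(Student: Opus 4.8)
The plan is to transfer the two finite-volume facts, namely that $\poislambdaz$- and hence $\dist{n}$-a.a.\ configurations are in general position (\ref{vz:PoissonGeneralPosition}) and contain no empty cells (\ref{vz:GibbsNonemptySets}), to the limit field $\barp$ through the convergence $\barpn \overset{\tau_\mathcal{L}}{\goto}\barp$. The guiding principle is that each of the three bad events ``(GP1) fails'', ``(GP2) fails'' and ``$E(\gamma)\neq\emptyset$'' can be written as a countable union of \emph{local} events, and that the indicator of a bounded local event is a tame local function (it is bounded by $1 \le 1\cdot(1+\left<\gamma,1+\norm{m}^{2+\delta}\right>)$, so $a=1$ works) and hence lies in $\mathcal{L}$; convergence in $\tau_\mathcal{L}$ then yields $\barp(A)=\lim_n \barpn(A)$ for every such local event $A$.

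For general position I would first reduce to fixed windows: (GP1) and (GP2) fail globally iff they already fail inside some $\Lambda_m$, since collinearity of three nuclei and the existence of a common power-equidistant point for four weighted points are decided by those points alone; hence it suffices to prove $\barp(\text{GP fails in }\Lambda_m)=0$, which by the limit argument reduces to $\barpn(\text{GP fails in }\Lambda_m)=0$. The key observation here is that the restriction of $\tildepn$ to any bounded window is absolutely continuous with respect to $\pi^z$ restricted to that window: on the disjoint partition cells $\Lambda_n^{\kappa}$ the configuration is an independent superposition of $\dist{n}$-pieces, each absolutely continuous with respect to Poisson, so the densities multiply over the finitely many cells meeting the window. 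Together with (\ref{vz:PoissonGeneralPosition}) and the translation invariance of general position this gives $\barpn(\text{GP fails in }\Lambda_m)=0$, and the limit finishes this part.

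For the absence of empty cells I would use temperedness to localise. By Lemma~\ref{lemma:prazdnebunkjsoukonecnepruniky}, for $\gamma\in\Msetl{l}$ the emptiness of a cell whose nucleus lies in $\Lambda_m$ and whose weight is at most $r$ is decided by $\gamma_{U(0,2k+1)}$ as soon as $\Lambda_m\oplus B(0,r)\subset U(0,k)$ and $k\ge l$; hence the truncated count of such empty cells is a tame local function (bounded by $\gamma(\Lambda_m\times\markspace)$). Since $\barp(\Msettemp)=1$ and, by Lemma~\ref{lemma:TempAreSuppor}, $\barpn(\Msetl{l})\ge 1-\eps$ uniformly in $n$, the full event ``$E(\gamma)\cap\Lambda_m\neq\emptyset$'' is recovered by letting $r,k\to\infty$ and $\eps\to 0$, and convergence in $\tau_\mathcal{L}$ reduces everything to controlling the $\barpn$-expected number of empty cells in $\Lambda_m$.

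This last control is the main obstacle. Even though $\dist{n}$ produces no empty cells, the superposition defining $\tildepn$ can create them, because points from a neighbouring partition cell $\Lambda_n^{\kappa}$ may cover the weighted ball $B(x',x'')$ of a nucleus $x$; such cells can occur only near the interfaces $\partial\Lambda_n^{\kappa}$, since a nucleus deep inside a cell would need a neighbour of weight comparable to its distance to the boundary, an event suppressed by the moment assumption $\hmoment$. I would therefore bound the expected number of empty cells in one period by $O(n)$, using $\hmoment$ to control the penetration depth of covering balls, and then exploit the normalising factor $1/\abs{\Lambda_n}$ in $\barpn$: averaging over the shifts $\kappa\in\Lambda_n\cap\ztwo$ makes the density of interface-generated empty cells vanish as $n\to\infty$. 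Hence the $\barpn$-expected truncated empty-cell count in $\Lambda_m$ tends to $0$, which gives $\barp(E(\gamma)\cap\Lambda_m\neq\emptyset)=0$ for every $m$, and therefore $E(\gamma)=\emptyset$ for $\barp$-a.a.\ $\gamma$.
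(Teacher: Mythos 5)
Your treatment of general position is essentially the paper's argument: reduce to the local events ``$\gamma_{\Lambda_m}$ is in general position'', transfer $\poislambdaz$-a.s.\ validity to $\dist{n}$ and then to $\tildepn\circ\vartheta_\kappa^{-1}$, and pass to $\barp$ using that the indicator of a local event is tame and local. (The paper only uses the shifts $\kappa$ with $\Lambda_m+\kappa$ inside a single partition cell and lets the boundary fraction vanish, while you invoke absolute continuity of the restricted superposition with respect to Poisson; both work.)

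The empty-cell part, however, has a genuine gap, and it is exactly where the difficulty sits. You work with the event ``some cell of the \emph{full} diagram $L(\gamma)$ with nucleus in $\Lambda_m$ is empty''. Under $\tildepn$ this event does have positive probability (the superposition creates empty cells near the interfaces of the cells $\Lambda_n^\kappa$), so your argument stands or falls with the claim that the $\tildepn$-expected number of such interface-generated empty cells per period is $O(n)$ and washes out under the $1/\abs{\Lambda_n}$ averaging. You assert this via ``$\hmoment$ suppresses the penetration depth'', but proving it requires quantitative control of the finite-volume Gibbs measures under independent superposition (covering probabilities for the ball $B(x',x'')$ by balls from a neighbouring $\dist{n}$-sample, hence correlation or stochastic-domination bounds for $\dist{n}$), none of which is established in the paper or in your sketch. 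A second, smaller problem is the localisation itself: emptiness of $L(x,\gamma)$ is \emph{not} determined by $\gamma_{U(0,2k+1)}$ for tempered $\gamma$ alone — the far half-planes only cover a ball $U(0,k/2)$ around the origin, so one additionally needs the local cell to be confined there, which is precisely why the paper later introduces condition (C1) (``we see a point''). The paper avoids both issues entirely by choosing different local events: $\Mset_z^k=\{\gamma: E(\gamma_{\Lambda_k})=\emptyset\}$, defined through the diagram of the \emph{restriction} $\gamma_{\Lambda_k}$. Since deleting points only enlarges Laguerre cells, $E(\gamma_{\Lambda_n})=\emptyset$ implies $E(\gamma_{\Lambda_k})=\emptyset$, so $\tildepn\circ\vartheta_\kappa^{-1}(\Mset_z^k)=1$ exactly whenever $\Lambda_k+\kappa$ lies in one partition cell — no interface estimate is needed — and Lemma~\ref{lemma:prazdnebunkjsoukonecnepruniky} identifies $\bigcap_k\Mset_z^k$ with $\{E(\gamma)=\emptyset\}$ on the $\barp$-full set of tempered, regular configurations in general position. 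You would need either to adopt that monotone localisation or to actually prove the $O(n)$ interface bound; as written, the second half of your proof does not go through.
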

\begin{proof} Define for $k \in \N$ the sets $\Mset_{gp}^k = \{\gamma \in \Mset: \gamma_{\Lambda_k} \text{ is in general position}\}$ and $\Mset_{gp} = \{\gamma \in \Mset: \gamma \text{ is in general position}\}$.
	Then we have that $\Mset_{gp} = \bigcap_{k \in \N} \Mset_{gp}^k$ and also that $\Mset_{gp}^k \subset \Mset_{gp}^{k-1}$ and therefore any probability measure $\dist{}$ on $\Mset$ satisfies that $\lim_{k \to \infty} \dist{}(\Mset_{gp}^k)~=~\dist{}(\Mset_{gp}).$

	Now fix $k \in \N$.  Then according to (\ref{vz:PoissonGeneralPosition}) we have for all $\Lambda \in \boreldvaom$ and for all $z > 0$ that $\poislambdaz(\Mset_{gp}) = 1$, so also $\poislambdaz(\Mset_{gp}^k) = 1$. Therefore, for $n \geq k$ we have that
	$$\dist{n}(\Mset_{gp}^k) = \int_{\Mset_{gp}^k}\frac{1}{Z_{\Lambda_n}} e^{-H(\gamma_{\Lambda_n})} \poislambdanz(\drm \gamma)= 1$$
	and since $\Lambda_k \subset \Lambda_n$ we also have $\tildepn(\Mset_{gp}^k) = 1$.  Now take $\barpn
 = \frac{1}{(2n)^2}\sum_{\kappa \in \Lambda_n \cap \mathbb{Z}^2} \tildepn \circ \vartheta_\kappa^{-1}.
	$
	It holds that if $\Lambda_k + \kappa \subset \Lambda_n$ then $\tildepn \circ \vartheta_\kappa^{-1}(\Mset_{gp}^k) = 1$. It also holds that for all $ \kappa \in \Lambda_{n-k-1} \cap \mathbb{Z}^2$ we have $\Lambda_k + \kappa \subset \Lambda_n$, so we can write $\forall n \geq k + 1$
	\begin{align*}
		\barpn(\Mset_{gp}^k) &= \frac{1}{(2n)^2}\sum_{\kappa \in \Lambda_n \cap \mathbb{Z}^2} \tildepn\circ \vartheta_\kappa^{-1}(\Mset_{gp}^k) \\
		&= \frac{(2(n-k-1))^2}{(2n)^2} + \frac{1}{(2n)^2}\sum_{\kappa \in \Lambda_n \setminus \Lambda_{n-k-1} \cap \mathbb{Z}^2} \tildepn \circ \vartheta_\kappa^{-1}(\Mset_{gp}^k).
	\end{align*}
	Therefore, $\lim_{n\to\infty}\barpn(\Mset_{gp}^k)$ = 1. Since $\barp$ is a limit of  $\{\barpn\}_{n \in \N}$ in the $\tau_{\mathcal{L}}$ topology and $\ind \left[\gamma \in \Mset_{gp}^k \right]$ is a tame and local function, we get that $1=\lim_{n\to\infty}\barpn(\Mset_{gp}^k) = \barp(\Mset_{gp}^k).$
	This holds $\forall k \in \N$ and therefore $\barp(\Mset_{gp}) = 1$.
	For the second part, we define sets
	\begin{align*}
		\Mset_z = \{\gamma \in \Mset: E(\gamma) = \emptyset\}, \text{ and }
		\Mset_z^k = \{\gamma \in \Mset:  E(\gamma_{\Lambda_k}) = \emptyset\}, \, k \in \N, \, \Lambda_k = \left[-k,k \right)^2.
	\end{align*}
	Due to Lemma \ref{lemma:prazdnebunkjsoukonecnepruniky} and the fact that $\barp$-a.a.$\,\gamma \in \Mset$ satisfy the regularity conditions and are in general position, we can write
	$\lim_{k \to \infty} \barp(\Mset_{z}^k) = \barp(\Mset_{z}).$ Fix $k \in \N$,  then (\ref{vz:GibbsNonemptySets}) together with the implication $E(\gamma_{\Lambda_n}) = \emptyset \implies E(\gamma_{\Lambda_k}) = \emptyset$ imply that $\dist{n}(\Mset_z^k) = 1$ for all $n~\geq~k$.
	The rest of the proof follows analogously as in the previous case.
\end{proof}

\begin{definition}
	The set of \textit{admissible configurations} is defined as
	\begin{equation} \label{def:Mspruhem}
		\overline{\Mset} = \{\gamma \in \Msettemp: \gamma \text{ satisfies (R1), (R2), (GP1), (GP2) and } E(\gamma) = \emptyset\}\cup \{\zeromeasure\}.
	\end{equation}
\end{definition}

Using Lemmas \ref{lemma:podminkar2} and \ref{lemma:podminkygp} together with Lemma \ref{lemma:podmr0r1protemp} from Appendix, we have the following proposition.

\begin{proposition} \label{lemma:SupportOfBarP}
	Consider the set of admissible configurations $\overline{\Mset}$ defined in (\ref{def:Mspruhem}). It holds that $\barp(\overline{\Mset}) = 1$. Particularly for $\barp$-a.a.$\,\gamma \neq \zeromeasure$ we have that $L(\gamma)$ is a~normal tessellation.
\end{proposition}

\subsubsection{An infinite-volume Gibbs--Laguerre measure}

Recall formula (\ref{def: conditionalenergy}) for the conditional energy of a configuration $\gamma$ in $\Lambda$.
Thanks to Proposition \ref{lemma:KeyPropForEnergyFunctionVertices}, we know how this function looks for admissible configurations.
\begin{lemma} \label{lemma:CondEnForAdmissibleConf}
	Take $\gamma \in \overline{\Mset}$ and recall that our energy function is of the form (\ref{def:EFVertices}). Then we have that $H_\Lambda (\gamma) = 6 \cdot \abs{\gamma_\Lambda}$, $\forall \Lambda \in \boreldvaom$.
\end{lemma}

\begin{proof}
	If $\gamma = \zeromeasure$, then it clearly holds. For $\gamma \neq \zeromeasure$ we have that $\gamma_\Lambda = \{x_1, \dots, x_M\}$ for some $M \in \N$. Denote $\gamma_\Lambda^i = \{x_1, \dots, x_i\}$. From the definition of the conditional energy
	$$H_\Lambda(\gamma) = \lim_{n\to\infty} H(\gamma_{\Lambda_n}) - H(\gamma_{\Lambda_n \setminus \Lambda}) = \sum_{i=1}^{M} \lim_{n\to\infty} H(\gamma_{\Lambda_n \setminus \Lambda} \gamma_\Lambda^i) - H(\gamma_{\Lambda_n \setminus \Lambda}\gamma_\Lambda^{i-1}).$$
	Thanks to the assumptions on $\gamma$ we get that $L(\gamma)$ is a normal tessellation with no empty cells and therefore for all $i = 1, \dots, M$ there exists $n$ large enough so that $L(x_i, \gamma_{\Lambda_n \setminus \Lambda} \gamma_\Lambda^i)$ is bounded. Proposition \ref{lemma:KeyPropForEnergyFunctionVertices} implies $\lim_{n\to\infty} H(\gamma_{\Lambda_n \setminus \Lambda} \gamma_\Lambda^i) - H(\gamma_{\Lambda_n \setminus \Lambda}\gamma_\Lambda^{i-1}) = 6,$ which finishes the proof.
\end{proof}

\bigskip

Recall that $\Mset_a = \{\gamma \in \Mset: \supmark{\gamma} \leq a\}$, $a \in \N$, is the set of configurations whose marks are at most $a$. We define an increasing sequence of \textit{local sets} (i.\,e., subsets of $\Mset$  whose  indicator is a local function). Take $\Lambda \in \boreldvaom$ and $\l, n, a \in \N$ and define
\begin{equation}
	\begin{aligned}\label{def:MnyABC}
		&\C(\Lambda,a,\l,n) =  \left \lbrace \xi \in \Mset:  \xi \text{ satisfies  assumptions (C1) and (C2)}  \right \rbrace, \text{ where} \\
		& \qquad  \text{(C1)}: \, \text{there exists } u \in \xi_{\Lambda_n \setminus \Lambda}: u' \in U\left(0,\frac{1}{2}\l\right),   \\
		& \qquad \text{(C2)}: \, \forall \gamma \in \Mset_a, \,   \forall x \in \gamma_{\Lambda} \text{ we have } L(x, \xi_{\Lambda_n \setminus \Lambda} \cup \{x\}) \subset U\left(0, \frac{1}{2} \l\right). \\
	\end{aligned}
\end{equation}
Put
$\B(\Lambda,a,\l) = \bigcup_{n \in \N} \C(\Lambda,a,\l,n),$ and $\A(\Lambda,a) = \bigcup_{\l \in \N} \B(\Lambda,a,\l),$
then $\forall \Lambda \in \boreldvaom$ and  $\forall a,l,n \in \N$ it holds that
$\C(\Lambda,a,\l,n) \subset \C(\Lambda,a,\l+1,n), \, \C(\Lambda,a,\l,n) \subset \C(\Lambda,a,\l,n~+~1~),$ $\B(\Lambda,a,l) \subset \B(\Lambda,a,l+1), \text{ and } \A(\Lambda,a) \supset \A(\Lambda,a+1).$ We also have the following equality.

\begin{lemma} \label{lemma:OMnachABC}
	Take $\Lambda \in \boreldvaom$, then we have that
	\begin{equation*}
		\overline{\Mset} = \bigcap_{a \in \N} \bigcup_{\l \in \N} \bigcup_{n \in \N} \overline{\Mset} \cap \C(\Lambda,a,\l,n) \cup \{\zeromeasure\}.
	\end{equation*}
\end{lemma}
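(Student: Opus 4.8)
The inclusion $\supseteq$ is immediate: for every $a\in\N$ the set $(\overline{\Mset}\cap\A(\Lambda,a))\cup\{\zeromeasure\}$ is contained in $\overline{\Mset}$ (both $\overline{\Mset}\cap\A(\Lambda,a)$ and $\{\zeromeasure\}$ are), and an intersection of subsets of $\overline{\Mset}$ is again a subset of $\overline{\Mset}$. The content of the lemma is therefore the inclusion $\subseteq$. Since $\zeromeasure$ is adjoined at every level $a$, it already lies in the right-hand side, so it remains to show: for every $\gamma\in\overline{\Mset}$ with $\gamma\neq\zeromeasure$ and every $a\in\N$ there are $\l,n\in\N$ with $\gamma\in\C(\Lambda,a,\l,n)$, i.e.\ $\gamma$ satisfies (C1) and (C2). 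I would fix such a $\gamma$ and $a$ throughout and recall from Proposition \ref{lemma:SupportOfBarP} that $\gamma$ is tempered, in general position and space filling ($\mathrm{conv}$ of its nuclei is $\rtwo$ by (R2)), with $E(\gamma)=\emptyset$; in particular $\gamma$ has infinitely many points, while $\gamma_\Lambda$ is finite.

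Condition (C1) is the easy half and is exactly where the ``we almost surely see a point'' requirement enters. Since $\gamma_\Lambda$ is finite and $\gamma$ is infinite, there is a point $u=(u',u'')\in\gamma$ with $u'\notin\Lambda$. I would then take $\l$ large enough that $u'\in U(0,\tfrac12\l)$ and $n$ large enough that $u'\in\Lambda_n$; then $u\in\gamma_{\Lambda_n\setminus\Lambda}$ with $u'\in U(0,\tfrac12\l)$, which is (C1). These choices only force $\l,n$ to be large, and by the monotonicities $\C(\Lambda,a,\l,n)\subset\C(\Lambda,a,\l+1,n)$ and $\C(\Lambda,a,\l,n)\subset\C(\Lambda,a,\l,n+1)$ noted before the lemma, enlarging the indices never destroys membership; hence (C1) is compatible with the larger choices demanded by (C2).

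The real work is (C2): I must produce $\l,n$ so that for every weighted point $x=(x',x'')$ with $x'\in\Lambda$ and $\supmark{x}\le a$ the cell $L(x,\gamma_{\Lambda_n\setminus\Lambda}\cup\{x\})$ is contained in $U(0,\tfrac12\l)$. The plan has two ingredients. First, boundedness in the limit: writing $\varphi_\infty:=\gamma_{\Lambda^c}$, the configuration $\varphi_\infty\cup\{x\}$ is still tempered, regular and in general position, since removing the finitely many nuclei in $\Lambda$ and adjoining one bounded point preserves these properties (Lemma \ref{lemma:zachovanipodminek} and Lemma \ref{lemma:podmr0r1protemp}), and $\mathrm{conv}$ of its nuclei is still $\rtwo$; hence by Lemma \ref{lemma:neprazdnebunkyjsoukonecnepruniky} the cell $L(x,\varphi_\infty\cup\{x\})$ is a finite, bounded intersection of half-planes. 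Combining the power-distance estimate of Lemma \ref{lemma:pomocne} with the temperedness bound $y''\le\abs{y'}-k$ for the far nuclei (the defining property of $\Msetl{\l}$), I would upgrade this to a uniform statement: there is a radius $r$, depending only on $\gamma,\Lambda$ and $a$ (the worst case being $x''=a$, $x'\in\mathrm{clo}(\Lambda)$, using monotonicity of the cell in $x''$), with $L(x,\varphi_\infty\cup\{x\})\subset U(0,r)$ for all admissible $x$ simultaneously; I would then fix $\l$ with $U(0,r)\subset U(0,\tfrac14\l)$.

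The second, and main, obstacle is passing from $\varphi_\infty$ to a finite window $\varphi_n:=\gamma_{\Lambda_n\setminus\Lambda}$. As $\varphi_n\uparrow\varphi_\infty$, the cells are nested and decreasing, $L(x,\varphi_n\cup\{x\})\downarrow L(x,\varphi_\infty\cup\{x\})$, so for finite $n$ the cells are \emph{larger} than the limit and for small $n$ may even be unbounded; thus the containment is not inherited from the limit and must be earned. I would argue in two stages: first, there is $n_0$ for which the nuclei of $\varphi_{n_0}$ already enclose $\mathrm{clo}(\Lambda)$, i.e.\ each $x'\in\Lambda$ lies in the interior of their convex hull, which by the standard criterion that a Laguerre cell is bounded exactly when its nucleus lies in the interior of the convex hull of the competing nuclei makes every cell $L(x,\varphi_n\cup\{x\})$, $n\ge n_0$, bounded; second, using the decreasing convergence to a cell strictly inside $U(0,\tfrac12\l)$ together with the compactness of $\{(x',x''):x'\in\mathrm{clo}(\Lambda),\,0<x''\le a\}$ and monotonicity in $x''$, I would extract a single finite $n\ge n_0$ for which $L(x,\varphi_n\cup\{x\})\subset U(0,\tfrac12\l)$ holds uniformly in $x$. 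The reason this uniform passage works is that the temperedness of $\gamma$ controls the weights, hence the geometry, of the far generators uniformly, so that escape of the cell past radius $\tfrac12\l$ is ruled out by finitely many enclosing generators, all of which lie in some $\Lambda_n$. Taking $\l,n$ at least as large as the values demanded by (C1) and (C2) then yields $\gamma\in\C(\Lambda,a,\l,n)$, and since $a$ was arbitrary this gives $\gamma$ in the right-hand side, completing the inclusion $\subseteq$.
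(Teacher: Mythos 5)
Your proposal is correct and follows essentially the same route as the paper's proof: the inclusion $\supseteq$ is immediate, (C1) is obtained by picking a point of $\gamma$ outside $\Lambda$ and enlarging $l,n$ (using the stated monotonicities of the sets $\C$), and (C2) is reduced via the monotonicity $L((x',x''),\cdot)\subset L((x',a),\cdot)$ to the worst case $x''=a$ and then made uniform over $x'\in\mathrm{clo}(\Lambda)$ by compactness. The only organizational difference is at the pointwise step: the paper asserts the finite-window containment $L((x',a),\xi_{\Lambda_{n_x}\setminus\Lambda}\cup\{(x',a)\})\subset U(0,\tfrac12 l_x)$ directly and propagates it to an $\eps_x$-neighbourhood of $x'$ via the finite half-plane representation and the openness of the target ball, whereas you first bound the limit cell $L(x,\gamma_{\Lambda^c}\cup\{x\})$ and descend to a finite window by monotone convergence of the cells --- a detail-level variation, not a different method.
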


\begin{proof}
	The relation $\supset$ clearly holds. Take $\xi \in \overline{\Mset}$, $\xi \neq \zeromeasure$. We would like to show that $\forall a \in \N$ there exist $\l, n  \in \N$ such that $\xi \in \C(\Lambda,a,\l,n)$. Fix $a \in \N$ and consider
 $n \geq n_0 = \text{min} \{n \in \N: \exists u \in \xi_{\Lambda_n \setminus \Lambda}\}$ and $l \geq l_0 = \text{min} \{\l \in \N: \Lambda_{n_0} \subset U(0,\frac{1}{2}l)\}$. This will ensure that assumption (C1) is satisfied.

	Now \buno\ assume that $\Lambda$ is closed (otherwise, work with clo($\Lambda$)). We will use the observation that $L((x',x''), \gamma) \subset L((x',a), \gamma)$, whenever $x'' \leq a$. Therefore, to prove (C2), it is enough to prove that for some $n,l \in \N$ and $\forall x' \in \Lambda$ we have that $$L\left((x',a), \xi_{\Lambda_{n} \setminus \Lambda}\cup \{(x',a)\}\right) \subset U\left(0,\frac{1}{2}l\right).$$
	
	It holds (since $\xi \in \overline{\Mset}$) that $\forall x' \in \Lambda$ there exist $n_x, l_x$ such that
	$$
	L\left((x',a), \xi_{\Lambda_{n_x} \setminus \Lambda}\cup \{(x',a)\}\right) \subset U\left(0,\frac{1}{2}l_x\right).
	$$
	Then, because of the representation (\ref{def:LagCellAsIntersection}) and the openness of $U\left(0,\frac{1}{2}l_x\right)$, there exists $\eps_x> 0$ such that also $\forall y' \in U(x',\eps_x)$ we have that
	$$
	L\left((y',a), \xi_{\Lambda_{n_x} \setminus \Lambda}\cup \{(y',a)\}\right) \subset U\left(0,\frac{1}{2}l_x\right).
	$$
	Therefore, we have an open cover of $\Lambda$, $\Lambda \subset \bigcup_{x' \in \Lambda} U(x', \eps_x)$ and since $\Lambda$ is a compact set, there exists a finite cover $\Lambda \subset \bigcup_{i = 1}^N U(x_i', \eps_{x_i})$. To finish the proof, it is enough to take $n = \max \{n_0, n_{x_1}, \dots, n_{x_N}\}$ and $l = \max \{l_0, l_{x_1}, \dots, l_{x_N}\}$.
\end{proof}

\bigskip

Recall formula (\ref{df:GibbsianKernel}) for Gibbs kernel $
	\Xi_\Lambda(\xi,\drm \gamma) = \frac{e^{-H_\Lambda(\gamma_\Lambda\xi_{\Lambda^c})}}{Z_\Lambda(\xi)} \poislambdaz(\drm \gamma).$ We need to make sure that this quantity is well defined, at least for almost all configurations. To do that, we need the following observation which can be proven similarly as (\ref{vz:PoissonGeneralPosition}) in \cite{ar:LautensackZuyev}, Proposition 3.1.5. (see also \cite{bo:MollerRandomVoronoi}, Proposition 4.1.2.).

\begin{lemma} \label{lemma:asGPForBoundaryCondition}
	Take $\xi \in \Mset$ such that it is in general position. Then for all $\Lambda \in \boreldvaom$ and for all $z > 0$ we get that for $\poislambdaz$-a.a.\,$\gamma \in \Mset$ also $\xi_{\Lambda^c}\gamma_{\Lambda}$ is in general position.
\end{lemma}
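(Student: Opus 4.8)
The plan is to verify the two general-position conditions (GP1) and (GP2) separately, in each case by showing that the expected number of configurations of points (triples for (GP1), quadruples for (GP2)) that \emph{violate} the condition is zero under $\poislambdaz$; since a non-negative integer-valued random variable of zero mean vanishes almost surely, this forces $\xi_{\Lambda^c}\gamma_\Lambda$ to be in general position for $\poislambdaz$-a.a.\ $\gamma$. This follows the scheme of \cite{ar:LautensackZuyev}, Proposition 3.1.5, the only novelty being the fixed, locally finite boundary configuration $\xi_{\Lambda^c}$; I would deal with it by means of the (multivariate) Mecke--Slivnyak formula, integrating out the finitely many points of $\gamma_\Lambda$ one at a time and summing over the at most countably many choices of the remaining points taken from $\xi_{\Lambda^c}$. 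Since both (GP1) and (GP2) are inherited by subconfigurations, the hypothesis that $\xi$ is in general position already disposes of every violating tuple built exclusively from points of $\xi_{\Lambda^c}$, so in each count below only tuples containing at least one point of $\gamma_\Lambda$ remain.

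For (GP1) I would count the triples of points of $\xi_{\Lambda^c}\gamma_\Lambda$ whose locations are collinear, splitting the count according to the number $j \in \{1,2,3\}$ of points coming from $\gamma_\Lambda$. Applying Mecke's formula to integrate out the $j$ Poisson points while keeping all other points frozen, one integrates the location of the last Poisson point last: for two already fixed, distinct locations the set of locations making the triple collinear is the line through them, which is Lebesgue-null in $\rtwo$, so the inner integral against $z\lambda_\Lambda$ vanishes (and likewise for the sub-cases $j=2,3$ by Tonelli, the degenerate possibility that two of the integrated locations coincide being itself null). Summing these zeros over the countably many admissible choices of the fixed points from $\xi_{\Lambda^c}$ yields zero expected number of collinear triples, hence (GP1) holds almost surely.

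For (GP2) the geometric input is that three points $p_1,p_2,p_3$ with non-collinear locations (write $p_i=(x_i,u_i)$, with $x_i$ the location and $u_i$ the weight) possess a unique \emph{radical centre} $z_0$, namely the intersection of the radical axes $HP(p_1,p_2)$ and $HP(p_1,p_3)$; these axes are non-parallel exactly because the normals $x_2-x_1$ and $x_3-x_1$ are linearly independent, i.e.\ because the three locations are not collinear. A fourth point $p_4=(x_4,u_4)$ then produces a common power-equidistant point, and hence a (GP2)-violating quadruple, precisely when $z_0$ also lies on $HP(p_1,p_4)$, that is when $\rho(z_0,p_4)=\rho(z_0,p_1)$; for a fixed radical centre $z_0$ this is the single equation $\abs{x_4-z_0}^2-u_4^2=c$ with $c=\rho(z_0,p_1)$ constant. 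I would again count bad quadruples, split by the number $j\ge 1$ of points from $\gamma_\Lambda$, and integrate one designated Poisson point $p_4$ out last, the remaining three points determining $z_0$. For each fixed weight $u_4$ the locus $\abs{x_4-z_0}^2=c+u_4^2$ is a circle, a single point, or empty, hence Lebesgue-null in $\rtwo$; by Tonelli the bad locus has $(z\lambda_\Lambda\otimes\markdist)$-measure zero, so the inner integral over $p_4$ vanishes, every term of the count is zero, and (GP2) holds almost surely.

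The main technical point is the organisation of the (GP2) count when several of the four points belong to $\gamma_\Lambda$: to form the radical centre $z_0$ I must know that the three points other than the designated $p_4$ have non-collinear locations for almost every realisation of the integration variables. When these three are all taken from $\xi_{\Lambda^c}$ this is automatic from the general position of $\xi$; when one or more of them is itself a Poisson point being integrated in an outer integral, the set on which their locations are collinear is Lebesgue-null (by the same argument as for (GP1)) and therefore contributes nothing to the outer integration, while off this null set the inner integral over $p_4$ already vanishes. Combining the two null sets via Tonelli gives the vanishing of the whole multiple integral, which is the crux of the argument; everything else is the routine verification that the collinearity and equal-power-distance loci are lower-dimensional, together with the bookkeeping of the Mecke formula in the presence of the infinite but locally finite boundary configuration $\xi_{\Lambda^c}$.
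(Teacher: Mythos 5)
Your proposal is correct and follows essentially the same route as the paper, which does not write the argument out but defers to the standard general-position proofs (\cite{ar:LautensackZuyev}, Proposition 3.1.5, and \cite{bo:MollerRandomVoronoi}, Proposition 4.1.2): count the violating triples and quadruples, integrate out the Poisson points of $\gamma_\Lambda$ by the Mecke formula while freezing the countably many points of $\xi_{\Lambda^c}$, and observe that the collinearity and equal-power-distance loci are Lebesgue-null. Your explicit treatment of the case split by the number of Poisson points in a tuple, and of the degenerate collinear subcases via Tonelli, is exactly the bookkeeping the paper's citation implicitly requires.
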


Now we can show that the Gibbs kernel is well defined for all $\xi \in \overline{\Mset} \cup \Msetf$.

\begin{lemma} \label{lemma:ZLambdaDobreDef}
	Let $\xi \in \overline{\Mset}$ or $\xi \in \Msetf$ such that it is in general position and $E(\xi) = \emptyset$, then we have that $0 < Z_\Lambda(\xi) < \infty$, $\forall \Lambda \in \boreldvaom,\, \forall z > 0$.
\end{lemma}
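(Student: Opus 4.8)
The plan is to prove the two inequalities $Z_\Lambda(\xi)>0$ and $Z_\Lambda(\xi)<\infty$ separately, after using Lemma \ref{lemma:asGPForBoundaryCondition} to restrict, for each fixed $\xi$, to the full-measure set of $\gamma$ for which $\nu:=\gamma_\Lambda\xi_{\Lambda^c}$ is in general position. For the positivity I would exhibit a single event of positive $\poislambdaz$-measure on which the integrand is bounded away from $0$. The natural choice is the event $\{\gamma=\zeromeasure\}$ that the Poisson configuration is empty in $\Lambda$, which has probability $e^{-z\lambda(\Lambda)}>0$. On this event $\gamma_\Lambda\xi_{\Lambda^c}=\xi_{\Lambda^c}$; since $\xi_{\Lambda^c}$ has no point in $\Lambda$, the restrictions $(\xi_{\Lambda^c})_{\Lambda_n}$ and $(\xi_{\Lambda^c})_{\Lambda_n\setminus\Lambda}$ coincide, and both contain only non-empty cells (removing points only enlarges Laguerre cells, so $E(\xi)=\emptyset$ forces $E(\xi_{\Lambda^c})=\emptyset$), hence each term of the defining difference $H(\cdot_{\Lambda_n})-H(\cdot_{\Lambda_n\setminus\Lambda})$ is finite and their difference is identically $0$. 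Thus $H_\Lambda(\xi_{\Lambda^c})=0$ and $Z_\Lambda(\xi)\ge e^{-z\lambda(\Lambda)}>0$.

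For the finiteness it suffices to bound $H_\Lambda(\nu)$ from below by a constant independent of $\gamma$: then $e^{-H_\Lambda}\le e^{c}$ and, $\poislambdaz$ being a probability measure, $Z_\Lambda(\xi)\le e^{c}<\infty$. When $\xi\in\Msetf$, the configuration $\nu$ is finite, so for all $n$ large enough $\xi_{\Lambda_n\setminus\Lambda}=\xi_{\Lambda^c}$ and $(\gamma_\Lambda\xi_{\Lambda^c})_{\Lambda_n}=\nu$; the defining sequence of the conditional energy is therefore eventually constant and $H_\Lambda(\nu)=H(\nu)-H(\xi_{\Lambda^c})$. Since $H\ge 0$ and $H(\xi_{\Lambda^c})<\infty$ (again $E(\xi_{\Lambda^c})=\emptyset$ because $\xi_{\Lambda^c}$ is obtained from $\xi$ by deleting points), this gives $H_\Lambda(\nu)\ge -H(\xi_{\Lambda^c})$, a bound uniform in $\gamma$, and finiteness follows with $c=H(\xi_{\Lambda^c})$.

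When $\xi\in\overline{\Mset}$, I would instead prove $H_\Lambda(\nu)\ge 0$ by a dichotomy. If $E(\nu)=\emptyset$, then $\nu$ still lies in $\overline{\Mset}$: it agrees with the tempered $\xi$ outside the bounded set $\Lambda$, so it is tempered and satisfies (R1) by Lemma \ref{lemma:podmr0r1protemp}; it satisfies (R2) since its nuclei coincide with those of $\xi$ outside $\Lambda$; and (GP1),(GP2) hold by the general-position reduction. Lemma \ref{lemma:CondEnForAdmissibleConf} then yields $H_\Lambda(\nu)=6\abs{\gamma_\Lambda}\ge 0$. If instead $E(\nu)\neq\emptyset$, I would argue $H_\Lambda(\nu)=+\infty$: by Lemma \ref{lemma:prazdnebunkjsoukonecnepruniky} an empty cell of $\nu$ is realised as a finite intersection $\bigcap_i P(x,y_i^x)$, so for $n$ large enough to contain $x$ and all witnessing neighbours $y_i^x$ the cell $L(x,\nu_{\Lambda_n})$ is already empty and $H(\nu_{\Lambda_n})=+\infty$, while $H(\xi_{\Lambda_n\setminus\Lambda})$ stays finite. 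Hence the defining difference tends to $+\infty$. In either case $e^{-H_\Lambda(\nu)}\le 1$, so $Z_\Lambda(\xi)\le 1<\infty$.

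The main obstacle is precisely this upper bound for the infinite admissible boundary condition: one must rule out that inserting the points $\gamma_\Lambda$ can lower the energy, and the delicate point is the indeterminate form $\infty-\infty$ in the conditional-energy limit when empty cells appear. It is resolved by the observation that the subtracted restrictions $\xi_{\Lambda_n\setminus\Lambda}$ never contain empty cells, since cells only grow under point removal, so the subtracted term is always finite and the sign of $H_\Lambda$ is controlled by whether $E(\nu)$ is empty. By contrast, the finite case is routine, as there $H_\Lambda$ collapses to an eventually-constant difference that is bounded below by $-H(\xi_{\Lambda^c})$ using only the non-negativity of $H$.
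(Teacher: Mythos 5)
Your proof is correct and follows essentially the same route as the paper: for finite $\xi$ the uniform lower bound $H_\Lambda \geq -H(\xi_{\Lambda^c})$, and for $\xi \in \overline{\Mset}$ the a.s.\ dichotomy between $E(\gamma_\Lambda\xi_{\Lambda^c})=\emptyset$ (where Lemma \ref{lemma:CondEnForAdmissibleConf} gives $H_\Lambda = 6\abs{\gamma_\Lambda}\geq 0$) and $E(\gamma_\Lambda\xi_{\Lambda^c})\neq\emptyset$ (where $H_\Lambda = +\infty$). You additionally spell out the positivity $Z_\Lambda(\xi)>0$ via the empty-configuration event and the justification, via Lemma \ref{lemma:prazdnebunkjsoukonecnepruniky}, that an empty cell forces $H_\Lambda=+\infty$ without an $\infty-\infty$ ambiguity; both points are left implicit in the paper.
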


\begin{proof}
	At first take $\xi \in \overline{\Mset}$. Then we know that $\xi$ is in general position, satisfies the regularity conditions, and also $E(\xi) = \emptyset$. Thanks to Lemma \ref{lemma:zachovanipodminek} we have that also $\xi_{\Lambda^c}\gamma_{\Lambda}$, $ \gamma \in \Mset$, satisfies the regularity conditions and according to Lemma~\ref{lemma:asGPForBoundaryCondition} we have that for $\poislambdaz$-a.a.$\,\gamma \in \Mset$ it holds that $\xi_{\Lambda^c}\gamma_{\Lambda}$ is in general position. If $E(\xi_{\Lambda^c}\gamma_{\Lambda}) \neq \emptyset$, then $H_\Lambda(\xi_{\Lambda^c}\gamma_{\Lambda}) = + \infty$. Otherwise, thanks to Lemma~\ref{lemma:CondEnForAdmissibleConf} we get that $H_\Lambda(\xi_{\Lambda^c}\gamma_{\Lambda}) = 6 \abs{\gamma_{\Lambda}}$. Altogether $H_\Lambda(\xi_{\Lambda^c}\gamma_{\Lambda}) \geq 0$ for $\poislambdaz$-a.a.$\, \gamma$, and hence $Z_\Lambda(\xi) = \int e^{-H_\Lambda(\gamma_\Lambda \xi_{\Lambda^c})} \poislambdaz(\drm \gamma) < \infty.$
	Now take $\xi \in \Msetf$, which is in general position and has no empty cells, and denote $M = \abs{\xi_{\Lambda^c}}$. Then we can write
	$		H_\Lambda(\xi_{\Lambda^c}\gamma_{\Lambda}) = H(\xi_{\Lambda^c}\gamma_{\Lambda}) - H(\xi_{\Lambda^c})
		\geq - H(\xi_{\Lambda^c}) \geq - 3 \cdot {M \choose 3},
$
	since $L(\xi_{\Lambda^c})$ can have at most ${M \choose 3}$ vertices. Hence
	$Z_\Lambda(\xi) = \int e^{-H_\Lambda(\gamma_\Lambda \xi_{\Lambda^c})} \poislambdaz(\drm \gamma) < \infty.$
	
\end{proof}

\bigskip

Particularly, $\Xi_\Lambda$ (recall formula (\ref{df:GibbsianKernel})) is well defined for all $\xi \in \overline{\Mset} $ and $\xi \in \Msetf$ which are in general position and satisfy $E(\xi) = \emptyset$. Define \textit{the cut-off kernel}
\begin{equation*}
	\Xi_\Lambda^{n,a}(\xi,\drm \gamma) = \frac{\ind \{\gamma_\Lambda \in \Mset_a\} \cdot  e^{-H_\Lambda(\gamma_\Lambda\xi_{\Lambda_n\setminus \Lambda})}}{Z^{n,a}_\Lambda(\xi_{\Lambda_n\setminus \Lambda})}  \poislambdaz(\drm \gamma).
\end{equation*}
Using the second part of the proof of Lemma \ref{lemma:ZLambdaDobreDef}, we can see that $\Xi_\Lambda^{n,a}$ is well defined for all $\xi$ in general position with $E(\xi) = \emptyset$.

Recall sets $\Msetl{l}$ from Section \ref{subsec:TCaGM}. The following final auxiliary lemma justifies the definition (\ref{def:MnyABC}) of the sets $\C(\Lambda,a,l,n)$. These sets are chosen so that the conditional energy depends only on the boundary condition inside $\Lambda_n$.
\begin{lemma} \label{lemma:RovnostPodmEnergie}
	Let $\Lambda \in \boreldvaom$, and take $a, n,l \in \N$ such that $U\left(0,2l+1\right) \subset \Lambda_n$ and $\Lambda \oplus B(0,a) \subset U(0,\frac{1}{2}l)$. Then for all $\xi \in \C(\Lambda,a,l,n)\cap \Msetl{l}$ and for all $ \gamma \in \Mset_a$ such that $\xi_{\Lambda^c}\gamma_\Lambda$ are in general position and $E(\xi_{\Lambda^c}) = \emptyset$ we have that
	\begin{itemize} \label{vz:EkvPrazdnychMnozin}
		\item[i)]$E(\xi_{\Lambda^c}\gamma_\Lambda) \neq \emptyset \iff  E(\xi_{\Lambda_n \setminus \Lambda}\gamma_\Lambda) \neq \emptyset,$
		\item[ii)] $ H_\Lambda (\xi_{\Lambda^c}\gamma_\Lambda) =  H_\Lambda (\xi_{\Lambda_n \setminus \Lambda}\gamma_\Lambda)$.
	\end{itemize}
\end{lemma}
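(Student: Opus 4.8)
The plan is to reduce part (ii) to part (i) together with the vertex-count formula of Proposition~\ref{lemma:KeyPropForEnergyFunctionVertices}, treating part (i) by monotonicity in one direction and a covering argument in the other. Write $A=\xi_{\Lambda^c}\gamma_\Lambda$ and $B=\xi_{\Lambda_n\setminus\Lambda}\gamma_\Lambda$. Since $\Lambda\oplus B(0,a)\subset U(0,\frac{1}{2}l)\subset U(0,2l+1)\subset\Lambda_n$, we have $\Lambda\subset\Lambda_n$ and $B=A_{\Lambda_n}$, so $H_\Lambda(B)=H(\xi_{\Lambda_n\setminus\Lambda}\gamma_\Lambda)-H(\xi_{\Lambda_n\setminus\Lambda})$, and for every $m\geq n$ the increment $H(A_{\Lambda_m})-H(A_{\Lambda_m\setminus\Lambda})$ equals $H(\xi_{\Lambda_m\setminus\Lambda}\gamma_\Lambda)-H(\xi_{\Lambda_m\setminus\Lambda})$. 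Enumerating $\gamma_\Lambda=\{x_1,\dots,x_M\}$ and telescoping by removing the central points one at a time, Proposition~\ref{lemma:KeyPropForEnergyFunctionVertices} shows this increment equals $6M$ whenever $E(\xi_{\Lambda_m\setminus\Lambda}\gamma_\Lambda)=\emptyset$ (each removed central point has a bounded cell, and general position and absence of empty cells are preserved under removal), and equals $+\infty$ otherwise. Hence, once the empty-cell status is shown to be the same for all windows $m\geq n$ and for $A$, part (ii) follows at once: either all increments equal $6M$, or all equal $+\infty$.

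The geometric heart is a power-distance dichotomy on $U(0,\frac{1}{2}l)$. For a central point $x=(x',x'')\in\gamma_\Lambda$ we have $x'\in U(0,\frac{1}{2}l)$ and $x''\leq a$, so $\rho(z,x)=\abs{z-x'}^2-(x'')^2<l^2$ for all $z\in U(0,\frac{1}{2}l)$; by (C1) the witness $u\in\xi_{\Lambda_n\setminus\Lambda}$ with $u'\in U(0,\frac{1}{2}l)$ obeys the same bound. On the other hand, for any generator $w$ with $w'\in U(0,2l+1)^c$ the membership $\xi\in\Msetl{l}$ gives $w''\leq\abs{w'}-l$, and Lemma~\ref{lemma:pomocne} then yields $\rho(z,w)>l^2$ for all $z\in U(0,\frac{1}{2}l)$. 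Two consequences follow. First, the half-planes $P(x,w)$ arising from such far generators contain all of $U(0,\frac{1}{2}l)$, so they do not shrink any cell lying in $U(0,\frac{1}{2}l)$; since (C2) forces $L(x,\xi_{\Lambda_n\setminus\Lambda}\gamma_\Lambda)\subset L(x,\xi_{\Lambda_n\setminus\Lambda}\cup\{x\})\subset U(0,\frac{1}{2}l)$, we obtain $L(x,A)=L(x,B)$ for every $x\in\gamma_\Lambda$, so central cells and their emptiness are window-independent. Second, for each $z\in U(0,\frac{1}{2}l)$ the $\rho$-minimiser over $A$ has power distance below $l^2$, hence is located in $\Lambda_n$ and lies in $A_{\Lambda_n}=B$; thus $L(A)$ and $L(B)$ assign the same generator to every point of $U(0,\frac{1}{2}l)$.

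For part (i), the implication $E(B)\neq\emptyset\Rightarrow E(A)\neq\emptyset$ is immediate: $B\subset A$ as generator sets, every cell shrinks, and a point empty in $B$ stays empty in $A$. For the converse, take $p$ empty in $A$. If $p\in\gamma_\Lambda$, then $L(p,A)=L(p,B)$ by the second paragraph, so $p$ is empty in $B$. If $p\in\xi_{\Lambda^c}$ with $p'\in U(0,2l+1)^c$, I rule this out using the covering characterisation from the proof of Lemma~\ref{lemma:prazdnebunkjsoukonecnepruniky}: emptiness of $p$ in $A$ forces $B(p',p'')\subset\bigcup_{q\in A,\,q\neq p}B(q',q'')$, but $B(p',p'')$ is disjoint from $U(0,l)$, hence from every central ball $B(x',x'')\subset U(0,\frac{1}{2}l)$, so the central balls are redundant and $p$ would already be empty in $\xi_{\Lambda^c}$, contradicting $E(\xi_{\Lambda^c})=\emptyset$. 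Running the same arguments with $\Lambda_m\setminus\Lambda$ in place of $\Lambda^c$ and combining with the monotonicity sandwich $E(\xi_{\Lambda_n\setminus\Lambda}\gamma_\Lambda)\subset E(\xi_{\Lambda_m\setminus\Lambda}\gamma_\Lambda)\subset E(A)$ then gives the window-independence of the empty-cell status required for part (ii).

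The step I expect to be the main obstacle is the remaining case of part (i): a generator $p\in\xi_{\Lambda^c}$ with $p'\in U(0,2l+1)$ whose cell in $B$ is non-empty but lies entirely outside $U(0,\frac{1}{2}l)$, since such a $p$ is invisible to the diagram-coincidence on $U(0,\frac{1}{2}l)$ and could in principle be buried in $A$ by the far generators without being buried in $B$. Controlling this boundary bookkeeping is the technical crux: I would again invoke the covering characterisation of Lemma~\ref{lemma:prazdnebunkjsoukonecnepruniky}, arguing that the far generators, whose balls miss $U(0,l)$, cannot together with the central balls cover $B(p',p'')$ in any way not already realised by generators located inside $\Lambda_n$, so that $p$ is in fact empty already in $B$ (or in $\xi_{\Lambda^c}$, again contradicting $E(\xi_{\Lambda^c})=\emptyset$). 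Localising precisely which balls are needed to cover $B(p',p'')$, and confining them to $\Lambda_n$ via the $\Msetl{l}$ property and conditions (C1)--(C2), is where the delicate work lies.
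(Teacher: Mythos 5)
Your overall architecture agrees with the paper's: reduce ii) to i) plus the telescoping of Proposition \ref{lemma:KeyPropForEnergyFunctionVertices}, and drive i) with the dichotomy that on $U(0,\frac{1}{2}l)$ every central generator (and the witness from (C1)) has power distance below $l^2$ while, by $\xi \in \Msetl{l}$ and Lemma \ref{lemma:pomocne}, every generator located in $U(0,2l+1)^c$ has power distance above $l^2$. Your handling of $\impliedby$, of the case $p \in \gamma_\Lambda$ via $L(x,A)=L(x,B)$, and of ii) given i) is sound. The gap is in the remaining cases of $\implies$. For far generators you argue: $L(p,A)=\emptyset$ forces $B(p',p'') \subset \bigcup_{q \in A,\, q \neq p} B(q',q'')$; the central balls miss $B(p',p'')$; hence the balls of $\xi_{\Lambda^c}\setminus\{p\}$ alone cover $B(p',p'')$; hence $L(p,\xi_{\Lambda^c})=\emptyset$. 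The last step uses the converse of the covering characterisation, and that converse is false: covering of $B(p',p'')$ is necessary for emptiness but not sufficient. A single generator $q$ with much larger weight satisfies $B(q',q'') \supset B(p',p'')$ while $L(p,\{p,q\})$ is a nonempty half-plane far from both balls; the proof of Lemma \ref{lemma:prazdnebunkjsoukonecnepruniky} only ever uses the valid direction (not covered $\Rightarrow$ nonempty). More fundamentally, deleting generators whose balls miss $B(p',p'')$ can still enlarge $L(p,\cdot)$ in regions far from $B(p',p'')$, so redundancy of balls in the covering does not translate into redundancy of the half-planes $P(p,\cdot)$. The second gap is the case you yourself flag as the crux ($p \in \xi_{\Lambda_n\setminus\Lambda}$ with cell outside $U(0,\frac{1}{2}l)$): it is left open, and the sketched repair leans on the same invalid converse.

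The paper closes these cases without any covering argument, purely at the level of power distances, and this is where the hypothesis $E(\xi_{\Lambda^c})=\emptyset$ does its work: $L(p,\xi_{\Lambda^c})\neq\emptyset$, and if $L(p,\xi_{\Lambda^c}\gamma_\Lambda)=\emptyset$ then every $z \in L(p,\xi_{\Lambda^c})$ is stolen by its minimiser $u \in \gamma_\Lambda$, so $z \in L(u,\xi_{\Lambda^c}\gamma_\Lambda) \subset U(0,\frac{1}{2}l)$ by (C2); thus the entire stolen territory sits in $U(0,\frac{1}{2}l)$, where Lemma \ref{lemma:pomocne} and the witness from (C1) give $\rho(z,p) \leq l^2 < \rho(z,p)$ for far $p$. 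For $p \in \xi_{\Lambda_n\setminus\Lambda}$ the paper compares $L(p,\xi_{\Lambda^c})$ with $L(p,\xi_{\Lambda_n\setminus\Lambda}\gamma_\Lambda)$ (assumed nonempty for contradiction) and in either the intersecting or the disjoint case produces a point of $U(0,\frac{1}{2}l)$ that a generator located in $\Lambda_n^c$ would have to own — again contradicting Lemma \ref{lemma:pomocne}. You should replace your covering steps with this kind of direct bookkeeping; as written, the proof of part i) is incomplete.
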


\begin{proof}
First, we assume i) and prove ii).

Take $\xi, \gamma$ satisfying the assumptions. We have $\gamma_\Lambda = \{x_1, \dots, x_M\}$ for some $M \in \N$. Denote by $\gamma_\Lambda^i = \{x_1, \dots, x_i\}$, $i=1,\dots,M$. If $E(\xi_{\Lambda^c}\gamma_\Lambda) \neq \emptyset$ then according to i) also $E(\xi_{\Lambda_n \setminus \Lambda}\gamma_\Lambda) \neq \emptyset$ and we have $H_\Lambda (\xi_{\Lambda^c}\gamma_\Lambda) = + \infty = H_\Lambda (\xi_{\Lambda_n \setminus \Lambda}\gamma_\Lambda).$

	If $E(\xi_{\Lambda^c}\gamma_\Lambda) = E(\xi_{\Lambda_n \setminus \Lambda}\gamma_\Lambda) = \emptyset$, then thanks to the definition of the set $\C(\Lambda,a,l,n)$ we have that the cells $L(x_i,\xi_{\Lambda_n\setminus \Lambda}\gamma_\Lambda^i)$ are bounded $\forall i \in \{1, \dots,M\}$. Recalling Proposition~\ref{lemma:KeyPropForEnergyFunctionVertices} for our energy function $H$, we can write
	\begin{align*}
		H_\Lambda(\xi_{\Lambda_n\setminus \Lambda}\gamma_\Lambda) &= H(\xi_{\Lambda_n\setminus \Lambda}\gamma_\Lambda) - H(\xi_{\Lambda_n\setminus \Lambda}) \\&= \sum_{i=1}^{M} H(\xi_{\Lambda_n\setminus \Lambda}\gamma_\Lambda^i) - H(\xi_{\Lambda_n\setminus \Lambda}\gamma_\Lambda^{i-1})
		\overset{\text{P.} \ref{lemma:KeyPropForEnergyFunctionVertices}}{=} \sum_{i=1}^{M} 6 = 6 \cdot \abs{\gamma_\Lambda}.
	\end{align*}
	Using Lemma \ref{lemma:CondEnForAdmissibleConf} we also have that $H_\Lambda (\xi_{\Lambda^c}\gamma_\Lambda) = 6 \cdot \abs{\gamma_\Lambda}$.

	Now, it remains to prove i). Take $\xi, \gamma$ satisfying the assumptions. The implication $\impliedby$ always holds, so we only have to prove that if there exists an empty cell for $\xi_{\Lambda^c}\gamma_\Lambda$, then it is already empty in $\xi_{\Lambda_n\setminus\Lambda}\gamma_\Lambda$ (remember that $E(\xi_{\Lambda^c}) = \emptyset$).

	Let there exist $x \in \xi_{\Lambda^c}\gamma_\Lambda$ such that $L(x,\xi_{\Lambda^c}\gamma_\Lambda) = \emptyset$ and assume for contradiction that $E(\xi_{\Lambda_n\setminus\Lambda}\gamma_\Lambda) = \emptyset$. This means that either $x \in \xi_{\Lambda_n^c}$ or $L(x,\xi_{\Lambda_n\setminus\Lambda}\gamma_\Lambda) \neq \emptyset$. Recall  Lemma \ref{lemma:pomocne} and consider the three possible locations of the point $x$:

	1) $x \in \gamma_\Lambda$: Then $\forall z \in L(x,\xi_{\Lambda_n\setminus\Lambda}\gamma_\Lambda)$ there exists $y \in \xi_{\Lambda_n^c}$ such that $\rho(z,y) \leq \rho(z,x)$. However, from the choice of $n$ and $l$ and from the definition of the set $\C(\Lambda,a,l,n)$ we know that $y'\in U(0,2l+1)^c$, $x' \in U(0,\frac{1}{2}l)$ and $z \in U(0,\frac{1}{2}l)$. Using Lemma \ref{lemma:pomocne} we get that
	\begin{equation*}
		\rho(z,y) \leq \rho(z,x) \leq l^2 \overset{\text{L.} \ref{lemma:pomocne}}{<} \rho(z,(y',\abs{y'}-l)) \leq \rho(z,y),
	\end{equation*}
	which is clearly a contradiction.

	2) $x \in \xi_{\Lambda_n\setminus\Lambda}$: We know that $L(x,\xi_{\Lambda^c}\gamma_\Lambda) = \emptyset$ but
	$L(x,\xi_{\Lambda_n\setminus\Lambda}\gamma_\Lambda) \neq \emptyset$ and also $L(x,\xi_{\Lambda^c}) \neq \emptyset.$ Therefore,
	\begin{equation} \label{vz:pompom}
		\begin{aligned}
			\forall z \in L(x,\xi_{\Lambda^c}) \, \exists u \in \gamma_\Lambda &\text{ such that } z \in L(u,\xi_{\Lambda^c}\gamma_\Lambda), \\
			\forall z \in L(x,\xi_{\Lambda_n\setminus\Lambda}\gamma_\Lambda) \, \exists y \in \xi_{\Lambda_n^c} &\text{ such that } z \in L(y,\xi_{\Lambda^c}\gamma_\Lambda).
		\end{aligned}
	\end{equation}
	If $\exists z \in L(x,\xi_{\Lambda^c})\cap L(x,\xi_{\Lambda_n\setminus\Lambda}\gamma_\Lambda)$, then there exist $u \in \gamma_\Lambda$ and $y \in \xi_{\Lambda_n^c}$ such that $\rho(z,y) = \rho(z,u)$ and again we get a contradiction with Lemma \ref{lemma:pomocne}.
	
	Therefore $ L(x,\xi_{\Lambda^c})\cap L(x,\xi_{\Lambda_n\setminus\Lambda}\gamma_\Lambda) = \emptyset$. Then there exists $z \in L(x,\xi_{\Lambda_n\setminus\Lambda}\gamma_\Lambda)$ such that $\exists u \in \gamma_\Lambda$ such that $\rho(z,x) = \rho(z,u)$, i.\,e., $z \in L(u,\xi_{\Lambda_n\setminus\Lambda}\gamma_\Lambda)$. Since by (\ref{vz:pompom}) there also exists $y \in \xi_{\Lambda_n^c}$ such that $z \in L(y,\xi_{\Lambda^c}\gamma_\Lambda)$, we again get the contradiction $\rho(z,y) \leq \rho(z,u)$.

	3) $x \in \xi_{\Lambda_n^c}$: We know that $L(x,\xi_{\Lambda^c}\gamma_\Lambda) = \emptyset$ and $L(x,\xi_{\Lambda^c}) \neq \emptyset$. Therefore for all $ z \in L(x,\xi_{\Lambda^c})$ there exists $u \in \gamma_\Lambda$ such that $\rho(z,u) < \rho(z,x)$. Particularly, we can assume that $\rho(z,u) \leq \rho(z,v)$ for all $v \in \gamma_\Lambda$ and therefore $z \in L(u,\xi_{\Lambda^c}\gamma_\Lambda) \subset U(0,\frac{1}{2}l)$. Therefore $L(x,\xi_{\Lambda^c}) \subset U(0,\frac{1}{2}l)$.  Notice that $x' \in U(0,2l+1)^c$. From the definition of the set $\C$ there exists $y \in \xi_{\Lambda_n\setminus \Lambda}$ such that $y' \in U(0,\frac{1}{2}l)$, which implies that $\forall z \in L(x,\xi_{\Lambda^c})$ we have that
	\begin{equation*}
		\rho(z,x) \leq \rho(z,y) \leq l^2 \overset{\text{L.} \ref{lemma:pomocne}}{<} \rho(z,(x',\abs{x'}-l)) \leq \rho(z,x),
	\end{equation*}
	which is the final contradiction and the proof is finished.
\end{proof}

\bigskip

We are now ready to prove our main result.

\begin{theorem} \label{th:DLRproMozaiky}
	Consider the probability measure $\barp$ from Lemma \ref{lemma:existenceOfBarP} and assume that it satisfies $\barp(\{\zeromeasure\}) = 0$. Then the $\DLR_\Lambda$ equations hold for all $\Lambda \in \boreldvaom$ and for all measurable bounded local functions $F$. Particularly $\barp$ is an infinite-volume Gibbs measure with energy function $H$ defined in (\ref{def:EFVertices}) and activity $z>0$.
\end{theorem}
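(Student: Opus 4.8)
The plan is to verify the DLR$_\Lambda$ identity in the equivalent operator form $\int_\Mset F \, \barp(\drm\gamma) = \int_\Mset (\Xi_\Lambda F)(\xi)\, \barp(\drm\xi)$, where $(\Xi_\Lambda F)(\xi) = \int_\MsetLambda F(\gamma_\Lambda \xi_{\Lambda^c}) \, \Xi_\Lambda(\xi,\drm\gamma)$, for every bounded local $F$ and every $\Lambda \in \boreldvaom$. First I would record that the right-hand side is meaningful $\barp$-almost everywhere: by Proposition \ref{lemma:SupportOfBarP} we have $\barp(\overline{\Mset}) = 1$, and Lemma \ref{lemma:ZLambdaDobreDef} then gives $0 < Z_\Lambda(\xi) < \infty$ for $\barp$-a.a.\ $\xi$. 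The starting point is the exact finite-volume DLR equation: for $\Lambda \subset \Lambda_n$ the measure $\dist{n}$ satisfies DLR$_\Lambda$, since it is supported on configurations inside $\Lambda_n$ (in general position and without empty cells, by (\ref{vz:PoissonGeneralPosition}) and (\ref{vz:GibbsNonemptySets})), so its exterior trace is the finite $\xi_{\Lambda_n\setminus\Lambda}$ and $H_\Lambda$ is a genuine finite difference. Because $H$ is translation invariant, each shifted copy $\dist{n}\circ\vartheta_\kappa^{-1}$ with $\Lambda \subset \vartheta_\kappa(\Lambda_n)$ satisfies DLR$_\Lambda$ with the \emph{same} kernel $\Xi_\Lambda$, hence so does the average $\hatpn$ exactly. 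As in \cite{ar:RoellyZass}, $\hatpn$ and $\barpn$ share the $\tau_\mathcal{L}$-limit $\barp$, the discrepancy between them coming only from a boundary fraction whose relative weight vanishes.

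The obstruction to letting $\hatpn \to \barp$ directly in the DLR equation for $\hatpn$ is that $\Xi_\Lambda F$ is \emph{not} a local function, as $H_\Lambda(\gamma_\Lambda \xi_{\Lambda^c})$ depends on the whole exterior $\xi_{\Lambda^c}$. This is precisely the place where the failure of the range assumption $\hr$ for tessellations must be circumvented, and the substitute is the cut-off kernel $\Xi_\Lambda^{n,a}$ together with Lemma \ref{lemma:RovnostPodmEnergie}. The map $\xi \mapsto \int F(\gamma_\Lambda \xi_{\Lambda_n\setminus\Lambda})\, \Xi_\Lambda^{n,a}(\xi,\drm\gamma)$ reads $\xi$ only inside $\Lambda_n$, so it is local, and since $F$ is bounded and $H_\Lambda$ is non-negative on admissible configurations (Lemma \ref{lemma:CondEnForAdmissibleConf}) it is tame; thus it is admissible for $\tau_\mathcal{L}$-convergence. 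For parameters with $U(0,2l+1) \subset \Lambda_n$ and $\Lambda \oplus B(0,a) \subset U(0,\frac{1}{2}l)$, Lemma \ref{lemma:RovnostPodmEnergie} shows that on $\C(\Lambda,a,l,n)\cap\Msetl{l}$ and for $\gamma_\Lambda \in \Mset_a$ the conditional energy computed with the full exterior coincides with the one computed inside $\Lambda_n$; there $\Xi_\Lambda^{n,a}$ agrees with the restriction of $\Xi_\Lambda$ to $\{\gamma_\Lambda \in \Mset_a\}$, and the normalising constants $Z_\Lambda^{n,a}(\xi_{\Lambda_n\setminus\Lambda})$ and $Z_\Lambda(\xi)$ must be compared in the same regime.

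I would then take the limit in two stages. For fixed $a$ (letting $n \to \infty$, with $l$ driven by Lemma \ref{lemma:TempAreSuppor} so that $\barp(\Msetl{l})$ is as close to $1$ as desired), apply $\tau_\mathcal{L}$-convergence of $\hatpn$ to the local tame integrand to obtain the cut-off DLR identity for $\barp$, using Proposition \ref{lemma:SupportOfBarP} to discard configurations outside $\overline{\Mset}$ and Lemma \ref{lemma:OMnachABC} to cover, for each $a$, $\barp$-almost all of $\overline{\Mset}\setminus\{\zeromeasure\}$ by the increasing union $\bigcup_l\bigcup_n \bigl(\overline{\Mset}\cap\C(\Lambda,a,l,n)\bigr)$. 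Removing the cut-off then amounts to letting $a \to \infty$: the interior truncation $\ind\{\gamma_\Lambda \in \Mset_a\}$ increases to $1$ $\poislambdaz$-a.s., recovering the full kernel $\Xi_\Lambda$. It is exactly here that the hypothesis $\barp(\{\zeromeasure\}) = 0$ is used: condition (C1) forces a point of $\xi$ in $\Lambda_n\setminus\Lambda$, so the exhaustion by the $\C$-sets is $\barp$-full only once the atom at $\zeromeasure$ is excluded. The tail contributions from $\gamma_\Lambda \notin \Mset_a$ are controlled through the non-negativity of $H_\Lambda$ on admissible configurations and the moment assumption $\hmoment$, which give dominated/monotone passage.

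The main obstacle is the interchange of the two limits $n \to \infty$ and $a \to \infty$: one must ensure that the cut-off introduced to render the integrand local neither distorts the normalising constants nor leaves non-negligible mass on the region where $\Xi_\Lambda^{n,a} \neq \Xi_\Lambda$. Establishing the requisite uniform integrability, so that the gap between the cut-off DLR identity and the true one is uniformly small in $n$ and tends to $0$ as $a \to \infty$, is the delicate technical core, and it is what replaces the single clean invocation of the range assumption $\hr$ available in the original proof of Theorem \ref{th:Existencetheorem}.
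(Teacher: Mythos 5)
Your plan follows the same route as the paper's proof: start from the exact $\DLR_\Lambda$ identity for $\hatpn$, replace $\Xi_\Lambda$ by the cut-off kernel $\Xi_\Lambda^{k,a}$ on $\C(\Lambda,a,l,k)\cap\Msetl{l}$ via Lemma \ref{lemma:RovnostPodmEnergie} so that the integrand $\xi \mapsto \int F(\gamma_\Lambda)\Xi_\Lambda^{k,a}(\xi,\drm\gamma)$ becomes local and tame, pass to the $\tau_\mathcal{L}$-limit, and then undo the cut-off. You also correctly locate where $\barp(\{\zeromeasure\})=0$ enters (condition (C1) excludes $\zeromeasure$ from every $\C$-set, so Lemma \ref{lemma:OMnachABC} gives $\barp$-full coverage only without the atom).

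The gap is the step you yourself defer: you present the removal of the cut-off as an $a\to\infty$ limit to be interchanged with $n\to\infty$, and call the requisite uniform integrability ``the delicate technical core'' without supplying it. In the paper there is no interchange of limits at all: $a$, $l$ and $k$ are fixed as functions of a single $\eps$ \emph{before} $n\to\infty$, and the two cut-off errors are closed by an elementary uniform bound. Since $H_\Lambda(\gamma_\Lambda\xi_{\Lambda^c})\ge 0$ for the relevant $\xi$ and $H_\Lambda(\xi_{\Lambda^c})=0$, one has $Z_\Lambda(\xi)\ge \poislambdaz(\{\zeromeasure\})=:1/b>0$, so the kernel density is bounded by $b$ uniformly in $\xi$ and $n$; hence the tail over $(\Mset_a)^c$ contributes at most $b\,\poislambdaz((\Mset_a)^c)\le b\eps$, and on $\Mset_a$ (where the two kernels have equal numerators by Lemma \ref{lemma:RovnostPodmEnergie}) the discrepancy reduces to $\abs{Z_\Lambda(\xi)-Z_\Lambda^{k,a}(\xi)}\le\poislambdaz((\Mset_a)^c)\le\eps$, giving an error of at most $b^2\eps$. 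Your proposal is not wrong, but without these two estimates it does not close; with them, the ``delicate core'' turns out to be a two-line computation, and the whole argument collapses into one chain of $O(\eps)$ bounds exactly as in the paper.
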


\begin{proof}
	Take $\Lambda \in \boreldvaom$, measurable bounded $\Lambda$-local function $F$, We will show that
	\begin{equation*}
		\delta_0 = \abs{\int F(\gamma) \barp(\drm \gamma) - \int \int F(\gamma_\Lambda)\Xi_\Lambda(\xi, \drm \gamma) \barp(\drm \xi)} < \eps,\,  \forall \eps > 0.
	\end{equation*}
	Fix $\eps > 0$. Find $i_0$ smallest such that $\Lambda \subset \Lambda_{i_0}$. We will \buno\ assume that $i_0=1$ (otherwise work with $n \geq i_0$ in the whole proof). Then there exists $a \in \N$ such that
	\begin{itemize}
		\item[1.] $\poislambdaz(\Mset_a) \geq 1-\eps$.
	\end{itemize} For this $a$ find $l \in \N$ such that
	\begin{itemize}
		\item[2.] $\Lambda \oplus B(0,a) \subset U(0,\frac{1}{2}l)$,
		\item[3.] $\barp(\Msetl{l}) \geq 1-\eps$, $\barpn(\Msetl{l}) \geq 1- \eps$ for all $n \in \N$ (from Lemma \ref{lemma:TempAreSuppor}),
		\item[4.] $\barp(\B(\Lambda,a,l)) \geq 1-\eps$ (from Proposition \ref{lemma:SupportOfBarP}, Lemma \ref{lemma:OMnachABC} and  $\barp(\{\zeromeasure\}) = 0$).
	\end{itemize}
	For these $a$ and $l$ we can find $k \in \N$ such that
	\begin{itemize}
		\item[5.] $U(0,2l+1), \subset \Lambda_k$,
		\item[6.] $\barp(\C(\Lambda,a,l,k)) \geq 1-2\eps$.
	\end{itemize}
	Fix $a,l,k$ and recall measures $\hatpn$ defined in (\ref{df:stacionarniposl}). It holds that $\hatpn$ satisfy $(\DLR)_\Lambda$ and they are asymptotically equivalent to $\barpn$ in the sense that for any $G \in \mathcal{L}$ we get that $ \lim_{n\to\infty} \abs{\int G(\gamma)\hatpn(\drm \gamma) - \int G(\gamma) \barpn (\drm \gamma)} = 0$ (see \cite{ar:RoellyZass}, page 988).

	In particular, there exists $n_0$ such that $\forall n \geq n_0$ we get that $\hatpn(\Mset) \geq 1-\eps$. It also holds that $\barpn((\Msetl{l})^c) \geq \hatpn((\Msetl{l})^c)$. Therefore there exists $n_1 \geq n_0$ such that
	\begin{itemize}
		\item[7.] $\hatpn(\Msetl{l}) \geq 1-2\eps$ for all $n \geq n_1$,
		\item[8.] $\hatpn(\C(\Lambda,a,l,k)) \geq 1-3\eps$ for all $n \geq n_1$.
	\end{itemize}
	Now we have everything we need to estimate $\delta_0$. Assume \buno\ that $\abs{F} \leq 1$ and recall that $\barp(\overline{\Mset}) = 1$.
	\begin{align*}
		&\delta_0 = \abs{\int F(\gamma) \barp(\drm \gamma) - \int \int F(\gamma_\Lambda)\Xi_\Lambda(\xi, \drm \gamma) \barp(\drm \xi)}  \leq\barp((\C(\Lambda,a,l,k) \cap \Msetl{l})^c) \\
		&\qquad \qquad \qquad \qquad + \abs{\int F(\gamma) \barp(\drm \gamma) - \int_{\C(\Lambda,a,l,k) \cap \Msetl{l}} \int F(\gamma_\Lambda)\Xi_\Lambda(\xi, \drm \gamma) \barp(\drm \xi)} \\
		&\overset{3.,6.}{\leq} 3\eps + \abs{\int F(\gamma) \barp(\drm \gamma) - \int_{\C(\Lambda,a,l,k) \cap \Msetl{l}} \int F(\gamma_\Lambda)\Xi_\Lambda^{k,a}(\xi, \drm \gamma) \barp(\drm \xi)} \\
		& \qquad \quad  + \abs{\int_{\C(\Lambda,a,l,k) \cap \Msetl{l}} \left[\int_{\Mset_a} F(\gamma_\Lambda)\Xi_\Lambda^{k,a}(\xi, \drm \gamma)  -  \int_{\Mset_a} F(\gamma_\Lambda)\Xi_\Lambda(\xi, \drm \gamma)\right] \barp(\drm \xi)} \\
		&  \qquad  \quad + \abs{\int_{\C(\Lambda,a,l,k) \cap \Msetl{l}} \int_{(\Mset_a)^c} F(\gamma_\Lambda)\Xi_\Lambda(\xi, \drm \gamma) \barp(\drm \xi)}.
	\end{align*}
	Now we have for some $b < \infty$:
	\begin{equation}
		\begin{aligned} \label{vz:odhad1}
			&\abs{\int_{\C(\Lambda,a,l,k) \cap \Msetl{l}} \int_{(\Mset_a)^c} F(\gamma_\Lambda)\Xi_\Lambda(\xi, \drm \gamma) \barp(\drm \xi)} \\
			& \leq \int \int_{(\Mset_a)^c} \frac{1}{Z_\Lambda(\xi)} \poislambdaz(\drm \gamma) \barp(\drm \xi) \leq \poislambdaz((\Mset_a)^c) \cdot \frac{1}{\poislambdaz(\{\zeromeasure\})} \overset{1.}{\leq} b \cdot \eps.
		\end{aligned}
	\end{equation}
	Now for $\barp$-a.a. $\xi \in \C(\Lambda,a,l,k) \cap \Msetl{l}$ we can use Lemma \ref{lemma:RovnostPodmEnergie} to show that
	\begin{equation}
		\begin{aligned} \label{vz:odhad2}
			&\abs{\int_{\Mset_a} F(\gamma_\Lambda)\Xi_\Lambda^{k,a}(\xi, \drm \gamma)  -  \int_{\Mset_a} F(\gamma_\Lambda)\Xi_\Lambda(\xi, \drm \gamma)}\\
			& = \abs{\int_{\Mset_a} F(\gamma_\Lambda) e^{-H_\Lambda(\xi_{\Lambda^c}\gamma_\Lambda)} \left(\frac{Z_\Lambda(\xi) - Z_\Lambda^{k,a}(\xi)}{Z_\Lambda(\xi) \cdot Z_\Lambda^{k,a}(\xi)}\right)\poislambdaz(\drm \gamma)} \\
			&\leq \int_{\Mset_a} \frac{\abs{Z_\Lambda(\xi) - Z_\Lambda^{k,a}(\xi)}}{\poislambdaz(\{\zeromeasure\})^2}\poislambdaz(\drm \gamma) \leq b^2 \cdot \abs{Z_\Lambda(\xi) - Z_\Lambda^{k,a}(\xi)} 
			\overset{1.}{\leq} b^2 \cdot \eps.
		\end{aligned}
	\end{equation}
	Therefore, we can estimate
	\begin{align*}
		\delta_0 \leq c \cdot \eps + \abs{\int F(\gamma) \barp(\drm \gamma) - \int_{\C(\Lambda,a,l,k) \cap \Msetl{l}} \int F(\gamma_\Lambda)\Xi_\Lambda^{k,a}(\xi, \drm \gamma) \barp(\drm \xi)} =: c \cdot \eps + \delta_1,
	\end{align*}
	where $c = 3 + b + b^2$. We continue with $\delta_1$:
	\begin{align*}
		\delta_1 
		&\leq \barp((\C(\Lambda,a,l,k) \cap \Msetl{l})^c) +\abs{\int F(\gamma) \barp(\drm \gamma) - \int \int F(\gamma_\Lambda)\Xi_\Lambda^{k,a}(\xi, \drm \gamma) \barp(\drm \xi)} \\
		& \overset{3.,6.}{\leq} 3 \eps + \abs{\int F(\gamma) \barp(\drm \gamma) - \int \int F(\gamma_\Lambda)\Xi_\Lambda^{k,a}(\xi, \drm \gamma) \barp(\drm \xi)} = : 3\eps + \delta_2.
	\end{align*}
	Now we use the asymptotic equivalence for $\hatpn$ and $\barpn$ and the fact that $F(\gamma)$ and $G(\gamma) = \int F(\nu) \Xi_\Lambda^{k,a}(\gamma, \drm \nu)$ are bounded (and therefore tame) local functions. Let $n \geq n_1$, then we have the following estimate for $\delta_2$:
	\begin{align*}
		\delta_2 &= \abs{\int F(\gamma) \barp(\drm \gamma) - \int \int F(\gamma_\Lambda)\Xi_\Lambda^{k,a}(\xi, \drm \gamma) \barp(\drm \xi)} \\ &\leq\abs{\int F(\gamma) \drm\barp - \int F(\gamma) \drm\hatpn} + \abs{\int F(\gamma) \hatpn(\drm \gamma) - \int \int F(\gamma_\Lambda)\Xi_\Lambda^{k,a}(\xi, \drm \gamma) \hatpn(\drm \xi)} \\
		& + \abs{\int \int F(\gamma_\Lambda)\Xi_\Lambda^{k,a}(\xi, \drm \gamma) \barp(\drm \xi) - \int \int F(\gamma_\Lambda)\Xi_\Lambda^{k,a}(\xi, \drm \gamma) \hatpn(\drm \xi)}.
	\end{align*}
	We can choose $n_2 \geq n_1$ so that $\forall n \geq n_2$ we have
	\begin{align*}
		&\abs{\int F(\gamma) \barp(\drm \gamma) - \int F(\gamma) \hatpn(\drm \gamma)}  \leq \eps,\\
		&\abs{\int \int F(\gamma_\Lambda)\Xi_\Lambda^{k,a}(\xi, \drm \gamma) \barp(\drm \xi) - \int \int F(\gamma_\Lambda)\Xi_\Lambda^{k,a}(\xi, \drm \gamma) \hatpn(\drm \xi)} \leq \eps.
	\end{align*}
	Therefore, for $n \geq n_2$ we can write
	\begin{align*}
		\delta_2 
		& \leq 2 \eps  + \abs{\int F(\gamma) \hatpn(\drm \gamma) - \int \int F(\gamma_\Lambda)\Xi_\Lambda^{k,a}(\xi, \drm \gamma) \hatpn(\drm \xi)} =: 2\eps + \delta_3.
	\end{align*}
	Now for our last estimate. Since $\hatpn$ satisfies $\DLR_\Lambda$, we can write
	\begin{align*}
		\delta_3 &= \abs{\int F(\gamma) \hatpn(\drm \gamma) - \int \int F(\gamma_\Lambda)\Xi_\Lambda^{k,a}(\xi, \drm \gamma) \hatpn(\drm \xi)} \\
		&\leq 0 + \abs{\int \int F(\gamma_\Lambda)\Xi_\Lambda(\xi, \drm \gamma) \hatpn(\drm \xi) - \int \int F(\gamma_\Lambda)\Xi_\Lambda^{k,a}(\xi, \drm \gamma) \hatpn(\drm \xi)} \\
		&\leq \abs{\int_{\C(\Lambda,a,l,k) \cap \Msetl{l}} \left[ \int F(\gamma_\Lambda)\Xi_\Lambda(\xi, \drm \gamma) - \int F(\gamma_\Lambda) \Xi_\Lambda^{k,a}(\xi, \drm \gamma) \right] \hatpn(\drm \xi)} \\
		& \qquad \qquad \qquad \qquad \qquad + 2\cdot \hatpn((\C(\Lambda,a,l,k) \cap \Msetl{l})^c) \\
		&\overset{7.,8.}{\leq} \abs{\int_{\C(\Lambda,a,l,k) \cap \Msetl{l}} \left[ \int_{\Mset_a} F(\gamma_\Lambda)\Xi_\Lambda(\xi, \drm \gamma) - \int_{\Mset_a} F(\gamma_\Lambda) \Xi_\Lambda^{k,a}(\xi, \drm \gamma) \right] \hatpn(\drm \xi)} \\
		& \qquad \qquad \qquad \qquad \qquad +\abs{\int \int_{(\Mset_a)^c} F(\gamma_\Lambda)\Xi_\Lambda(\xi, \drm \gamma) \hatpn(\drm \xi)} + 10 \cdot \eps.
	\end{align*}
	Now analogously as in (\ref{vz:odhad1}) and (\ref{vz:odhad2}) we can estimate
	\begin{align*}
		&\abs{\int \int_{(\Mset_a)^c} F(\gamma_\Lambda)\Xi_\Lambda(\xi, \drm \gamma) \hatpn(\drm \xi)} \leq b \cdot \eps \\
		&\abs{\int_{\C(\Lambda,a,l,k) \cap \Msetl{l}} \int_{\Mset_a} F(\gamma_\Lambda)\left[\Xi_\Lambda(\xi, \drm \gamma) - \Xi_\Lambda^{k,a}(\xi, \drm \gamma) \right] \hatpn(\drm \xi)} \leq b^2 \cdot \eps.
	\end{align*}
	Putting everything together, we get that (recall that $c = 3 + b + b^2$)
	\begin{equation*}
		\delta_0 \leq c \cdot \eps + \delta_1 \leq (c + 3)\eps + \delta_2 \leq (c + 5)\eps + \delta_3 \leq (2c + 12) \eps.
	\end{equation*}
	This finishes the proof.
\end{proof}

\section{Concluding remarks}
In this paper, we have commented on the recent existence result from \cite{ar:RoellyZass} for the Gibbs marked point processes. We
provided a new formulation of the range assumption and applied the
general theorem to the family of Gibbs facet processes with repulsive interactions in $\rd$, $d \geq 2$. We also proved that the finite-volume Gibbs facet processes with attractive interactions do not exist in $\rtwo$. We believe that it should be possible to show that the finite-volume Gibbs facet processes with attractive interactions do not exist in any dimension. In the last section, we considered the Gibbs--Laguerre tessellations of $\rtwo$ and proved that, under the assumption that $\barp(\{\zeromeasure\}) = 0$, the infinite-volume Gibbs--Laguerre process exists for the energy function given in (\ref{def:EFVertices}).

It is natural to ask, whether the result of the last section can be generalized to random tessellations of $\rtwo$ with some other energy function or to higher dimensions (we thank the referee for raising these questions). Regarding the first one (some other energy function in $\rtwo$), the key ingredients here are the definition of the sets $\C(\Lambda,a,l,n)$ in (\ref{def:MnyABC}) and Proposition \ref{lemma:KeyPropForEnergyFunctionVertices}, which lead to Lemma \ref{lemma:RovnostPodmEnergie}. Therefore we would need for the other energy function to either satisfy that $H(\gamma)-H(\gamma \setminus \{x\})$ is equal to a constant (which does not seem probable) or we would need an analogy of Lemma \ref{lemma:RovnostPodmEnergie} to be true. At this moment, we are not aware of any such energy function.  Regarding the generalization to higher dimensions for energy function given by  (\ref{def:EFVertices}), both \cite{ar:RoellyZass} and \cite{ar:LautensackZuyev} are formulated for general $d \geq 2$ and it is true that it should be possible to formulate several of the results of Section \ref{subsec:Tessellations} for $d \geq 2$. However the proof of Proposition \ref{lemma:KeyPropForEnergyFunctionVertices} does not work for higher dimensions, since cycles appear in the graph structure even if we forbid empty cells and therefore the result for trees cannot be used. Considering generalizations to other energy functions in higher dimensions, the same remarks as for $d=2$ apply.

\section{Appendix}

This appendix contains several technical lemmas about Laguerre diagram and its cells and the connection between tempered configurations and Laguerre theory. These results are used in Section \ref{Sec:Gibbs-LaguerreMeasures}.
Although we formulate them for $d=2$, since the approach of Section \ref{Sec:Gibbs-LaguerreMeasures} cannot be easily generalised to higher dimensions, it should be possible to reformulate the results in this section for $d \geq 2$ (provided we consider the general formulation of conditions (R1), (R2), (GP1) and (GP2) from \cite{ar:LautensackZuyev}).

Let $\gamma \subset \rtwo \times (0,\infty)$, then the following lemma can be easily shown.

\begin{lemma}\label{lemma:zachovanipodminek}
	If $\gamma$ satisfies (R1), (R2), (GP1) and (GP2) and $E(\gamma) = \emptyset$. Then, for all $x \in \gamma$ also $E(\gamma \setminus x) = \emptyset$ and $\gamma \setminus x$ satisfies (R1), (R2), (GP1) and (GP2).
\end{lemma}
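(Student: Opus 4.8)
The four properties split into three that hold by a trivial monotonicity and one, (R2), that requires genuine work. First I would dispose of (R1), (GP1) and (GP2): each is a statement of the form ``no sub-collection of $\gamma$ exhibits a forbidden configuration'', so since $\gamma \setminus x$ has fewer points than $\gamma$, each property is inherited by the smaller set. The emptiness claim is equally cheap: deleting $x$ only removes constraints from the intersection \eqref{def:LagCellAsIntersection}, so for every $y \in \gamma \setminus x$ one has $L(y,\gamma \setminus x) = \bigcap_{w \in \gamma \setminus x} P(y,w) \supseteq \bigcap_{w \in \gamma} P(y,w) = L(y,\gamma)$, and the right-hand side is non-empty because $E(\gamma)=\emptyset$; hence $E(\gamma \setminus x) = \emptyset$.

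The real content is (R2). I would first recast it convex-geometrically: for $S \subset \rtwo$ one has $\mathrm{conv}(S)=\rtwo$ exactly when $S$ is contained in no closed half-plane. Since $\abs{\gamma}=\infty$ (a finite nucleus set has a bounded convex hull, hence cannot be $\rtwo$) and (GP1) forbids three collinear nuclei, the nuclei are pairwise distinct — two coinciding nuclei together with any third generator would give three nuclei on a line — so the nucleus set of $\gamma \setminus x$ is exactly $\gamma' \setminus \{x'\}$. It therefore suffices to exclude a closed half-plane containing $\gamma'\setminus\{x'\}$. Suppose, for contradiction, that $\langle y',u\rangle \le c$ for all $y \in \gamma \setminus x$, for some unit vector $u$ and $c \in \R$. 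Because $\gamma'$ lies in no closed half-plane (this is (R2) for $\gamma$), the removed nucleus satisfies $\langle x',u\rangle > c$; set $\beta := \langle x',u\rangle - c > 0$.

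The crux is to show that this separation forces $L(x,\gamma)$ to be unbounded, contradicting the compactness of all cells guaranteed by Theorem 2.2.8 of \cite{ar:LautensackZuyev} under (R1) and (R2). I would test the ray $z_s = x' + s u$, $s \ge 0$. A direct computation gives, for every $y \in \gamma$,
\begin{equation*}
	\rho(z_s,y) - \rho(z_s,x) = \abs{y' - x'}^2 - (y'')^2 + (x'')^2 + 2s\big(\langle x',u\rangle - \langle y',u\rangle\big),
\end{equation*}
and the half-plane assumption yields $\langle x',u\rangle - \langle y',u\rangle \ge \beta$ for all $y$. Hence $z_s \in L(x,\gamma)$ as soon as $s \ge \big((y'')^2 - (x'')^2 - \abs{y'-x'}^2\big)/(2\beta)$ for every $y$. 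Here (R1) does the decisive work: applied with $z = x'$ it shows $\rho(x',y) = \abs{x'-y'}^2 - (y'')^2 \to \infty$ along the infinitely many generators, so the numerators $(y'')^2 - \abs{y'-x'}^2 - (x'')^2 = -\rho(x',y)-(x'')^2$ tend to $-\infty$; only finitely many impose a positive lower bound on $s$, each finite, so their supremum $s^\ast$ is finite. Thus $z_s \in L(x,\gamma)$ for all $s \ge \max(s^\ast,0)$, the cell contains a ray, and the contradiction follows, establishing (R2) for $\gamma \setminus x$.

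The main obstacle is exactly this last step: unlike the other three conditions, (R2) is not monotone under deleting points, so no purely combinatorial argument is available. The key idea is that a nucleus strictly separated from all the others would own an unbounded cell, so it is precisely the compactness of Laguerre cells — which is what (R1) together with (R2) buys us — that prevents the convex hull from collapsing. Everything else, namely the three monotone conditions and the propagation of non-emptiness, is routine.
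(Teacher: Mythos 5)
Your proof is correct. The paper itself offers no argument for this lemma (it is introduced with ``the following lemma can be easily shown''), so there is no official proof to compare against; your write-up supplies exactly the details the author leaves to the reader. You rightly isolate (R2) as the only condition that is not monotone under deletion of a point, and your resolution is sound: if the remaining nuclei all lay in a closed half-plane $\{\langle \cdot,u\rangle \le c\}$, then (R2) for $\gamma$ forces $\langle x',u\rangle>c$, the identity $\rho(z_s,y)-\rho(z_s,x)=\abs{y'-x'}^2-(y'')^2+(x'')^2+2s(\langle x',u\rangle-\langle y',u\rangle)$ shows the ray $x'+su$ eventually lies in $L(x,\gamma)$ (with (R1) at $z=x'$ guaranteeing that only finitely many generators impose a positive threshold on $s$), and an unbounded cell contradicts the compactness of all cells under (R1)--(R2) from Theorem 2.2.8 of the Lautensack reference. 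The treatment of (R1), (GP1), (GP2), the non-emptiness of cells via the inclusion $L(y,\gamma)\subseteq L(y,\gamma\setminus x)$, and the distinctness of nuclei are all handled correctly.
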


It holds that Laguerre cells can be represented as a finite intersection of the closed half-planes $P(x,y)$. The proof of the following claim is just a slight modification of the proof of Lemma 10.1.1. in \cite{bo:StochasticAndIntergralGeometry}.

\begin{lemma} \label{lemma:neprazdnebunkyjsoukonecnepruniky}
	Let $\gamma \subset \rtwo \times (0, \infty)$ be such that $L(\gamma)$ is a tessellation. Then for all $L(x,\gamma) \in L(\gamma)$ there exist $k_x \in \N$ and $y_i^x \in \gamma \setminus E(\gamma)$, $i = 1, \dots, k_x$, such that
	\begin{equation} \label{vztah}
		L(x,\gamma) = \bigcap_{i =1}^{k_x} P(x,y_i^x).
	\end{equation}
\end{lemma}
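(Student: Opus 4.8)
The plan is to adapt the proof of Lemma 10.1.1 in \cite{bo:StochasticAndIntergralGeometry}, replacing the abstract separating hyperplanes between neighbouring cells by the radical axes $HP(x,y)$. This makes the relevant comparison function affine and renders the argument transparent; this affineness is precisely the feature that distinguishes the Laguerre setting from the generic tessellation statement.

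First I would fix a cell $C := L(x,\gamma)$ and use the local finiteness of the tessellation $L(\gamma)$ (property (iii), applied to the bounded set $C$) to conclude that only finitely many cells meet $C$. Writing the ones distinct from $C$ as $L(y_1,\gamma),\dots,L(y_{k},\gamma)$ with pairwise different generators $y_i \in \gamma$, $y_i \neq x$, each of these is a genuine nonempty cell of the tessellation, so $y_i \in \gamma \setminus E(\gamma)$ by the definition of $E(\gamma)$. By definition of the Laguerre cell we have $\rho(z,x) \le \rho(z,y_i)$ for every $z \in C$, i.e.\ $C \subseteq P(x,y_i)$ for each $i$. Setting $D := \bigcap_{i=1}^{k} P(x,y_i)$, this gives the easy inclusion $C \subseteq D$, and it remains to prove $D \subseteq C$.

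For the reverse inclusion I would argue by contradiction. Fix an interior point $p \in \mathrm{int}(C)$ and suppose there is $z \in D \setminus C$. Walking along the segment $[p,z]$, let $w$ be the last point still in $C$; since $C$ is closed and $z \notin C$, the segment parameter of $w$ is strictly below that of $z$, and points just beyond $w$ lie outside $C$. Each such point lies in some cell of the space-filling tessellation (property (ii)), and since these cells contain points arbitrarily close to $w \in C$, they meet $C$ and are therefore among the finitely many neighbours $L(y_1,\gamma),\dots,L(y_k,\gamma)$; passing to a subsequence $q_n$ tending to $w$ along the segment, one neighbour $L(y_i,\gamma)$ occurs for all $n$. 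One checks $q_n \notin P(x,y_i)$: indeed $q_n \in L(y_i,\gamma) \subseteq P(y_i,x)$ while $q_n \notin C = L(x,\gamma)$, so $\rho(q_n,y_i) \le \rho(q_n,x)$ must in fact be strict. The key point is now that $g(\cdot) := \rho(\cdot,x) - \rho(\cdot,y_i) = 2\dotproduct{y_i'-x'}{\cdot} + \mathrm{const}$ is affine. Along $[p,z]$ it is therefore affine in the segment parameter, with $g(p) < 0$ (since $p \in \mathrm{int}(C) \subseteq \mathrm{int}(P(x,y_i))$ by full-dimensionality) and $g(q_n) > 0$. As $q_n$ lies strictly between $p$ and $z$, affineness forces $g(z) \ge g(q_n) > 0$, i.e.\ $z \notin P(x,y_i)$, contradicting $z \in D$. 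Hence $D \subseteq C$, so $C = \bigcap_{i=1}^{k} P(x,y_i)$ with all $y_i \in \gamma \setminus E(\gamma)$, which is the representation (\ref{vztah}).

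I expect the main obstacle to be the bookkeeping in the third paragraph: justifying that the exit point $w$ is adjacent to one of the finitely many neighbouring cells (which requires the closedness of cells together with local finiteness, to promote a sequence of containing cells to a single limit cell through $w$) and verifying the strictness of the power-distance inequality for the approaching points $q_n$. Everything after that reduces to the elementary observation that the power-distance difference is affine, which is exactly what makes the finite half-plane representation fall out immediately.
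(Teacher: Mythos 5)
Your argument is correct and is exactly the ``slight modification of the proof of Lemma 10.1.1 in Schneider--Weil'' that the paper invokes without spelling out: finitely many neighbours by local finiteness, the easy inclusion $C\subseteq\bigcap_i P(x,y_i^x)$ from the cell definition, and the reverse inclusion via the exit point of a segment from an interior point, with the separating hyperplane of the generic argument replaced by the radical axis and the affineness of $\rho(\cdot,x)-\rho(\cdot,y_i)$ doing the work. The only points worth polishing are the ones you already flag (promoting the cells containing $q_n$ to a single neighbour through $w$ via closedness and local finiteness) plus the implicit fact that $x'\neq (y_i^x)'$ so that $P(x,y_i^x)$ is a genuine half-plane with nonempty interior.
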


The points $\{y_1^x, \dots,  y_{k_x}^x\}$ can be chosen as those points whose cells intersect the cell $L(x,\gamma)$ and we call them \textit{neighbours} of the point $x$. The same holds if $\gamma$ is finite and in general position.
If we take into consideration the definition of tempered configurations, we can get a similar result for empty Laguerre cells.

\begin{lemma}\label{lemma:prazdnebunkjsoukonecnepruniky}
	Let $\gamma \subset \rtwo \times (0,\infty)$ be such that $\gamma \in \Msettemp$, it satisfies the regularity conditions and is in general position. Then $\forall x \in E(\gamma)$ there exist $k_x \in \N$ and $y_{1}^x, \dots, y_{k_x}^x \in \gamma$ such that $L(x,\gamma) = \bigcap_{i =1}^{k_x} P(x,y_i^x)$.
\end{lemma}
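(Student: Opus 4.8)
The plan is to reduce the empty cell $L(x,\gamma)$, which by (\ref{def:LagCellAsIntersection}) is the intersection $\bigcap_{y\in\gamma}P(x,y)$ of an infinite family of closed half-planes, to a finite subintersection in two moves: first intersect down to a \emph{bounded} (hence compact) set using only finitely many half-planes, and then invoke a finite-intersection-property argument to witness the emptiness with finitely many more.

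First I would use the regularity condition (R2). Since $\operatorname{conv}(\gamma')=\rtwo$, the nucleus $x'$ lies in the interior of the convex hull of the nuclei. By the standard fact that interiority of a convex hull is already achieved on a finite subset (choose points of $\gamma'$ whose hull contains a small ball around $x'$, with Carathéodory bounding the number needed), there exist finitely many generators $y_1,\dots,y_m\in\gamma$ with $x'\in\operatorname{int}\operatorname{conv}\{y_1',\dots,y_m'\}$, so that the directions $y_i'-x'$ positively span $\rtwo$. The recession cone of the closed half-plane $P(x,y_i)=\{z:\langle y_i'-x',z\rangle\le c_i\}$ is $\{w:\langle y_i'-x',w\rangle\le 0\}$, hence the recession cone of $L_0:=\bigcap_{i=1}^m P(x,y_i)$ is $\bigcap_{i=1}^m\{w:\langle y_i'-x',w\rangle\le 0\}=\{0\}$, precisely because the $y_i'-x'$ positively span $\rtwo$. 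Thus $L_0$ is a closed convex set with trivial recession cone, i.e.\ a compact (bounded) polytope, possibly empty.

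If $L_0=\emptyset$ we are already done with $k_x=m$. Otherwise $L_0$ is nonempty and compact, and since $L(x,\gamma)\subseteq L_0$ we have $\emptyset=L(x,\gamma)=L_0\cap\bigcap_{y\in\gamma}P(x,y)=\bigcap_{y\in\gamma}\bigl(L_0\cap P(x,y)\bigr)$. The sets $L_0\cap P(x,y)$ are closed subsets of the compact set $L_0$ with empty total intersection, so by the finite intersection property some finite subfamily already has empty intersection: there are $y_{m+1},\dots,y_{k_x}\in\gamma$ with $L_0\cap\bigcap_{i=m+1}^{k_x}P(x,y_i)=\emptyset$. Collecting $y_1,\dots,y_{k_x}$ gives $\bigcap_{i=1}^{k_x}P(x,y_i)=\emptyset=L(x,\gamma)$, as required. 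This is the same compactness mechanism underlying Lemma \ref{lemma:neprazdnebunkyjsoukonecnepruniky} (and Lemma 10.1.1 in \cite{bo:StochasticAndIntergralGeometry}), now adapted to the empty-cell case.

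The main obstacle is the boundedness step: the whole reduction hinges on first turning the infinite intersection into a compact set using only finitely many half-planes, which is exactly what (R2) buys us via the recession-cone computation. Without compactness the finite-intersection argument would fail, since a decreasing family of nonempty closed convex sets can escape to infinity. I would also note that the tempered assumption $\gamma\in\Msettemp$, together with (R1) and property (\ref{lemma:vlasttemp}), gives an independent and more quantitative route to the same compactness: for $z$ ranging over a fixed ball, $\rho(z,y)\to+\infty$ uniformly as $\abs{y'}\to\infty$, so only finitely many generators are ever relevant over a bounded region; but the recession-cone argument above is the shortest path and is what I would write out in detail.
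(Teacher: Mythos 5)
Your proof is correct, but it follows a genuinely different route from the paper's. The paper reduces the empty‑cell case to the nonempty‑cell case: using temperedness via property (\ref{lemma:vlasttemp}), it picks $l$ so large that every generator $y\in\gamma_{U(0,2l+1)^c}$ has $B(y',y'')\cap B(x',x'')=\emptyset$; hence in the thinned configuration $\varphi=\gamma_{U(0,2l+1)^c}\cup\{x\}$ the cell $L(x,\varphi)$ is nonempty, Lemma \ref{lemma:neprazdnebunkyjsoukonecnepruniky} gives a finite half‑plane representation of $L(x,\varphi)$, and intersecting this with the finitely many half‑planes $P(x,y)$, $y\in\gamma_{U(0,2l+1)}$, yields the claim. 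You instead obtain compactness directly from (R2) via Carath\'eodory and the recession‑cone computation, and then witness emptiness by the finite intersection property; you never use temperedness, (R1), general position, or the nonempty‑cell lemma. Both arguments are sound. Yours is more self‑contained and isolates (R2) as the only hypothesis actually needed, which is a nice clarification; the paper's version recycles the already‑proved Lemma \ref{lemma:neprazdnebunkyjsoukonecnepruniky} and the tempered‑configuration machinery that is reused throughout Section 5, and it produces a more explicitly localized witness set (all generators in a fixed ball plus the neighbours of $x$ in $\varphi$), in the same spirit as the locality arguments appearing later (e.g.\ in Lemma \ref{lemma:podminkygp}). If you write your version out in full, make explicit that the chosen $y_i$ have $y_i'\neq x'$ (the projected configuration is simple), so each $P(x,y_i)$ is a genuine half‑plane with recession cone $\{w:\dotproduct{y_i'-x'}{w}\le 0\}$, and that the identity $\mathrm{rec}\bigl(\bigcap_i P(x,y_i)\bigr)=\bigcap_i \mathrm{rec}(P(x,y_i))$ is applied only when $L_0\neq\emptyset$.
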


\begin{proof}
	Let $x \in E(\gamma)$.
	Then it must hold that $B(x',x'') \subset \bigcup_{y \in \gamma, y \neq x} B(y',y'').$ The set $B(x',x'')$ is bounded and $\gamma$ is tempered, therefore, there exists $l$ such that $l \geq l(t)$, where $l(t)$ is from  (\ref{lemma:vlasttemp}), and $B(x',x'') \subset U(0,l)$. Therefore, we know that $\forall y \in \gamma_{U(0,2l+1)^c}$ we have that $B(x',x'') \cap B(y',y'') = \emptyset$. So we can write
	$$B(x',x'') \subset \bigcup_{y \in \gamma_{U(0,2l+1)},\,  y \neq x} B(y',y'').$$
	Let $\varphi = \gamma_{U(0,2l+1)^c} \cup \{x\}$, then according to Lemma \ref{lemma:zachovanipodminek}  it holds that $\varphi$ satisfies the regularity conditions and is in general position. Particularly $L(\varphi)$ is a~(normal) tessellation. Furthermore it holds that $B(x',x'') \not\subset \bigcup_{y \in \varphi, y \neq x} B(y',y'')$ and therefore $ \emptyset \neq L(x,\varphi) \in L(\varphi)$. This allows us to use Lemma \ref{lemma:neprazdnebunkyjsoukonecnepruniky} and we get that there exist $y_{1}^x, \dots, y_{n_x}^x \in \varphi$ such that $L(x,\varphi) = \bigcap_{i =1}^{n_x} P(x,y_i^x)$. Altogether we get that
	\begin{align*}
		L(x,\gamma) &= \bigcap_{y \in \gamma} P(x,y) = \bigcap_{ y \in \gamma_{U(0,2l+1)}} P(x,y) \cap \bigcap_{y \in \gamma_{U(0,2l+1)^c}} P(x,y) = \\
		& = \bigcap_{ y \in \gamma_{U(0,2l+1)}} P(x,y) \cap  L(x,\varphi) = \bigcap_{ y \in \gamma_{U(0,2l+1)}} P(x,y) \cap \bigcap_{i =1}^{n_x} P(x,y_i^x).
	\end{align*}
	There are only finitely many points in $\gamma_{U(0,2l+1)}$, which completes the proof.
\end{proof}

\bigskip

What follows now is an auxiliary lemma for the proof that tempered configurations satisfy (R0) and (R1).

\begin{lemma} \label{lemma:pomocne}
	Let $l \in \N$. Then $\forall z \in U\left(0,\frac{1}{2}l\right)$ and $\forall y' \in  U(0,2l+1)^c$ the following inequalities hold
	\begin{equation*}
		\rho(z,(y',\abs{y'}-l)) > l^2 \geq \underset{w \in U(0,\frac{1}{2}l)}{\sup}\abs{w-z}^2.
	\end{equation*}
\end{lemma}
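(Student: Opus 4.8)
The plan is to treat the two inequalities separately, since both are elementary consequences of the triangle inequality combined with the definition $\rho(z,(x,u)) = \abs{x-z}^2 - u^2$. The right-hand bound concerns only $z$ and $w$, while the left-hand bound is where the hypothesis $y' \in U(0,2l+1)^c$ enters.

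First I would dispose of the right-hand inequality $l^2 \geq \sup_{w \in U(0,\frac{1}{2}l)}\abs{w-z}^2$. For any $w, z \in U(0,\frac{1}{2}l)$ the triangle inequality gives $\abs{w-z} \leq \abs{w} + \abs{z} < \frac{1}{2}l + \frac{1}{2}l = l$, so $\abs{w-z}^2 < l^2$ for every admissible $w$; taking the supremum over $w$ preserves the (non-strict) bound by $l^2$. This step uses nothing about $y'$.

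For the left-hand inequality I would expand $\rho(z,(y',\abs{y'}-l)) = \abs{y'-z}^2 - (\abs{y'}-l)^2$ and bound $\abs{y'-z}$ from below. Since $\abs{y'} \geq 2l+1$ while $\abs{z} < \frac{1}{2}l$, the reverse triangle inequality applies and yields $\abs{y'-z} \geq \abs{y'} - \abs{z} > 0$, hence $\abs{y'-z}^2 \geq (\abs{y'}-\abs{z})^2$. Writing $r = \abs{y'}$ and factoring the resulting difference of squares,
\[
(r-\abs{z})^2 - (r-l)^2 = (l-\abs{z})\,(2r-l-\abs{z}),
\]
I would then invoke $l-\abs{z} > \frac{1}{2}l$ and $2r-l-\abs{z} \geq 2(2l+1) - l - \frac{1}{2}l = \frac{5}{2}l + 2$ to obtain $\rho(z,(y',\abs{y'}-l)) > \frac{1}{2}l \cdot (\frac{5}{2}l+2) = \frac{5}{4}l^2 + l > l^2$.

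I do not expect any genuine obstacle here: the lemma is a bookkeeping computation whose only content is verifying that the constants work out. The single point worth checking is that the weight $\abs{y'}-l = r-l \geq l+1 > 0$ is genuinely positive, so that $(y',\abs{y'}-l)$ is a legitimate generator in $\rtwo \times (0,\infty)$, and that the gap between the threshold $2l+1$ for the location $y'$ and the radius $\frac{1}{2}l$ for $z$ is wide enough to absorb all the cross terms — which the factored form above makes transparent.
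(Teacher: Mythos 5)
Your proof is correct and follows essentially the same route as the paper: both arguments reduce to the bound $\abs{y'-z}^2 \geq (\abs{y'}-\abs{z})^2$ (the paper gets it via $-2\dotproduct{z}{y'} \geq -2\abs{z}\abs{y'}$, you via the reverse triangle inequality) and then finish with elementary algebra, you by factoring the difference of squares and the paper by regrouping terms. The numerical bounds check out, and your side remark that $\abs{y'}-l \geq l+1 > 0$ is a valid observation not spelled out in the paper.
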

\begin{proof}
	Clearly the second inequality holds. For the first one, we can simply write
	\begin{align*}
		\rho(z,(y',&\abs{y'}-l)) = \abs{z-y'}^2 - (\abs{y'}-l)^2
		= \abs{z}^2 + \abs{y'}^2 -2\dotproduct{z}{y'} - \abs{y'}^2 + 2l\abs{y'} - l^2 \\
		& \geq \abs{z}^2 -2\abs{z}\abs{y'} + 2\abs{y'}l -l^2
		= \abs{z}^2 + l(\abs{y'} - l) + \abs{y'}(l-2\abs{z}) \geq l^2 + l > l^2.
	\end{align*}
\end{proof}

\begin{lemma}\label{lemma:podmr0r1protemp}
	It holds that all $\gamma \in \Msettemp$ satisfy (R0) and therefore the Laguerre cells $L(x,\gamma)$ are well defined. Furthermore, it holds that all $\gamma \in \Msettemp$ satisfy the first regularity condition (R1).
\end{lemma}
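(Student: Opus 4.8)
The plan is to reduce both regularity conditions to a single uniform lower bound on the power distance $\rho(z,\cdot)$ for generators lying far from the origin, obtained by combining Lemma~\ref{lemma:pomocne} with the defining property of tempered configurations. I would first fix $\gamma \in \Msettemp$, choose $t_0 \in \N$ with $\gamma \in \Mset^{t_0}$, and recall that then $\gamma \in \Msetl{l(t_0)}$, so that for every $l \geq l(t_0)$ and every generator $(x',x'') \in \gamma_{U(0,2l+1)^c}$ one has $B(x',x'') \cap U(0,l) = \emptyset$. An elementary computation shows this disjointness is equivalent to $|x'| \geq x'' + l$, i.e.\ $0 < x'' \leq |x'| - l$, where positivity follows from $|x'| \geq 2l+1$. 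Hence $(x'')^2 \leq (|x'|-l)^2$, which gives the pointwise comparison
$$\rho(z,(x',x'')) = |z-x'|^2 - (x'')^2 \geq |z-x'|^2 - (|x'|-l)^2 = \rho\bigl(z,(x',|x'|-l)\bigr).$$

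Combined with Lemma~\ref{lemma:pomocne}, this yields the key estimate: whenever $z \in U(0,\frac{1}{2}l)$, every generator $x = (x',x'') \in \gamma_{U(0,2l+1)^c}$ satisfies $\rho(z,x) > l^2$. In other words, only generators inside the bounded window $U(0,2l+1)$ can have power distance from $z$ below the threshold $l^2$, and by local finiteness of $\gamma$ there are only finitely many such generators.

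To prove (R1), I would fix $(z,t) \in \rtwo \times \R$ and choose $l \geq l(t_0)$ large enough that both $z \in U(0,\frac{1}{2}l)$ and $l^2 > t$. The key estimate then forces every generator with $\rho(z,x) \leq t$ to lie in $U(0,2l+1)$, of which there are only finitely many; this is exactly (R1). Condition (R0) follows immediately: assuming $\gamma \neq \emptyset$, pick any $x_0 \in \gamma$ and apply (R1) with $t = \rho(z,x_0)$ to see that $\{x \in \gamma : \rho(z,x) \leq \rho(z,x_0)\}$ is a finite nonempty set; the minimum of $\rho(z,\cdot)$ over this finite set is attained and, since all remaining generators have strictly larger power distance, it coincides with $\min_{x \in \gamma}\rho(z,x)$.

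The argument is essentially bookkeeping once the comparison with the extremal weighted point $(x',|x'|-l)$ is in place, so I do not anticipate a genuine obstacle. The only points requiring care are translating the ball-disjointness property of $\Msetl{l}$ into the weight bound $x'' \leq |x'|-l$ (and verifying this weight is positive, so that squaring preserves the inequality), and ensuring the chosen $l$ simultaneously exceeds $l(t_0)$, places $z$ inside $U(0,\frac{1}{2}l)$, and pushes the threshold $l^2$ above the prescribed level $t$ --- all of which are met by taking $l$ sufficiently large.
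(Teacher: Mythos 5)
Your proposal is correct and follows essentially the same route as the paper: both rest on combining the tempered-configuration property (which gives $y'' \leq |y'|-l$ for generators outside $U(0,2l+1)$) with Lemma \ref{lemma:pomocne} to force $\rho(z,y) > l^2$ for all far-away generators. The only cosmetic difference is that you establish (R1) first and deduce (R0) from a sublevel-set argument, while the paper proves (R0) directly by comparing against a generator placed inside $U(0,\tfrac{1}{2}l)$ and then observes that (R1) follows from the same estimates.
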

\begin{proof}
	Take $z \in \rtwo$ and $\gamma \in \Msett$, $t \in \N$. We want to show that there exists $\text{min}_{x \in \gamma } \rho(z,x)$. Clearly, if $\gamma \in \Msetf$, the assumption is satisfied. Consider an infinite configuration $\gamma$. We will use the property of tempered configurations given by (\ref{lemma:vlasttemp}), which states that there exists $l(t)$ such that $\forall l \geq l(t)$ the following implication holds
	\begin{equation} \label{prop1}
		(x',x'') \in \ms{\gamma}{(U(0,2l+1))^c} \implies B(x',x'') \cap U(0,l) = \emptyset.
	\end{equation}
	Choose $l$ large enough so that
	\begin{center}
		i) $l \geq l(t)$ \qquad
		and \qquad ii) $z \in U\left(0, \frac{1}{2}l\right)$ and there exists $x \in \gamma_{U\left(0, \frac{1}{2}l\right)}$.
	\end{center}
	Clearly, such $l$ can be chosen. Lemma \ref{lemma:pomocne} states that $\forall y' \in U(0,2l+1)^c$
	\begin{equation}\label{prop2}
		\rho(z, (y',\abs{y'} - l)) \geq \underset{w \in U(0,\frac{1}{2}l)}{\sup} \abs{w - z}^2.
	\end{equation}

	We know, because of property (\ref{prop1}), that
	\begin{equation*}
		y'' \leq \abs{y'} - l,\, 	\forall y = (y',y'') \in \ms{\gamma}{(U(0,2l+1))^c},
	\end{equation*}and therefore
	$\rho(z,y) = \abs{y'-z}^2 - (y'')^2  \geq \abs{y'-z}^2 - (\abs{y'} - l)^2 =  \rho(z,(y',\abs{y'} - l)).$
	
	Then, using (\ref{prop2}) together with point ii) above, we get that $\forall y \in \ms{\gamma}{(U(0,2l+1))^c}$  $$\rho(z,y) \geq \underset{w \in U(0,\frac{1}{2}l)}{\sup} \abs{w - z}^2 \geq \abs{x'-z}^2 \geq \rho(z,x)$$ and this completes the proof as then $\text{min}_{x \in \gamma } \rho(z,x) = \text{min}_{x \in \gamma_{U(0,2l+1)} } \rho(z,x),$
	which exists thanks to the local finiteness of $\gamma$.
	
	Now consider (R1). We want to show that for every $z \in \rtwo$ and $t \in \mathbb{R}$ only finitely many elements $y \in \gamma$ satisfy $\abs{z-y'}^2 - (y'')^2 \leq t $. But this is a clear consequence of the derivations above. Take $z \in \rtwo$ and $t \in \mathbb{R}$. Then there exists $l$ large enough such that $l^2 > t$ and such that it satisfies i) and ii). Then we have that $\forall y \in \ms{\gamma}{(U(0,2l+1))^c}$
	\begin{equation*}
		\abs{z-y'}^2 - (y'')^2 \geq \abs{z-y'}^2 - (\abs{y'}-l)^2 \geq  l^2 > t,
	\end{equation*}
	and therefore only the points $y \in \ms{\gamma}{U(0,2l+1)}$ (and there are finitely many of them) can satisfy $\abs{z-y'}^2 - (y'')^2 \leq t $.
\end{proof}

\section*{Acknowledgement}
\small
This work was partially supported by the Czech Science Foundation, project no.22-15763S. The author would also like to thank prof.\ Viktor Bene{\v s} for an introduction to the theory of Gibbs processes and for his useful remarks concerning this paper.



\end{document}